\newcommand{\Spec}{\operatorname{Spec}}
\newcommand{\Ext}{\operatorname{Ext}}
\newcommand{\Proj}{\operatorname{Proj}}
\newcommand{\CC}{\mathbb{C}}
\newcommand{\ZZ}{\mathbb{Z}}
\newcommand{\PP}{\mathbb{P}}
\newcommand{\Sip}{\operatorname{Sip}}
\newcommand{\Slip}{\operatorname{Slip}}
\newcommand{\Hilb}{\operatorname{Hilb}}
\newcommand{\Hom}{\operatorname{Hom}}
\newcommand{\rrank}{\operatorname{r}}
\newcommand{\srr}{\operatorname{sr}}
\newcommand{\crr}{\operatorname{cr}}
\newcommand{\brr}{\operatorname{br}}
\newcommand{\Ann}{\operatorname{Ann}}
\newtheorem{theorem}{Theorem}[section]
\newtheorem{lemma}[theorem]{Lemma}
\newtheorem{problem}{Problem}
\newtheorem{proposition}[theorem]{Proposition}
\newtheorem{corollary}[theorem]{Corollary}
\newtheorem{remark}[theorem]{Remark}
\newtheorem{example}[theorem]{Example}
\theoremstyle{definition}
\title{Identifying limits of ideals of points in the case of projective space}
\begin{document}

\author{Tomasz Ma\'{n}dziuk\thanks{E-mail:~\textsf{t.mandziuk@mimuw.edu.pl}, Faculty of Mathematics, Computer Science and Mechanics, University of Warsaw, ul. Banacha 2, 02-097 Warszawa, Poland, ORCID: 0000-0002-3992-5890}}

\maketitle

\begin{abstract}
We study the closure of the locus of radical ideals in the multigraded Hilbert scheme associated with a standard graded polynomial ring and the Hilbert function of a homogeneous coordinate ring of points in general position in projective space. In the case of projective plane, we give a sufficient condition for an ideal to be in the closure of the locus of radical ideals. For projective space of arbitrary dimension we present a necessary condition.
The paper is motivated by the border apolarity lemma which connects such multigraded Hilbert schemes with the theory of ranks of polynomials.
\end{abstract}

\section{Introduction}\label{s:introduction}

Let $S^* = \CC[x_0,\ldots, x_n]$ be a polynomial ring and $F\in S^*_d$ for some positive integer $d$. The (Waring) rank of $F$ is $\min\{r\in \mathbb{Z} | F = L_1^d + \ldots + L_r^d \text{ for some } L_1,\ldots, L_r\in S^*_1\}$.
Finding the rank of a general homogeneous polynomial of $S^*$ of a given degree is a classical problem studied, among others, by Sylvester. For a given polynomial $F\in S^*_d$ and integer $r$, the condition that $F$ has Waring rank at most $r$ can be expressed algebraically by means of the classical apolarity lemma (see \cite[Thm.~5.3]{IK06}). We recall this here.
We may consider the polynomial ring $S=\CC[\alpha_0, \ldots, \alpha_n]$ acting on $S^*$ by partial differentiation. We denote this action by $\lrcorner$. Given $F\in S^*$ one may consider the annihilator ideal $\Ann(F)$ in $S$ consisting of all those polynomials $\theta$ for which $\theta \lrcorner F = 0$. Then the condition that $F$ has rank at most $r$ is equivalent to the existence of a homogeneous, radical ideal $I\subseteq \Ann(F)$ such that $S/I$ has Hilbert polynomial $r$.

Since, for a fixed integer $r$,  the locus of points in $\PP(S^*_d)$ corresponding to polynomials of rank at most~$r$ need not be closed, in the context of algebraic geometry, it is more natural to consider the Zariski closure of this locus. In that way, one obtains the secant variety $\sigma_r(\nu_d(\PP(S^*_1)))$. The polynomials in $S^*_d$ corresponding to points of $\sigma_r(\nu_d(\PP(S^*_1)))$ are said to have border rank at most $r$. 

In a recent paper \cite{buczyska2019apolarity}, a version of the apolarity lemma for the case of border rank was established. Its statement uses the notion of multigraded Hilbert scheme introduced in \cite{HS02}. For a fixed number of variables $n+1$ and positive integer $r$, one considers the multigraded Hilbert scheme $\Hilb_{S[\PP^n]}^{h_{r,n}}$ where $h_{r,n}$ is the generic Hilbert function of the homogeneous coordinate ring of $r$ points in $\PP^n$ (see below for a precise definition of $h_{r, n}$). This scheme has a distinguished irreducible component called $\Slip_{r, n}$ which is the closure of the locus of points corresponding to radical ideals. Then $F\in S^*_d$ has border rank at most~$r$ if and only if there is a point  $[I]\in \Slip_{r, n}$ such that $I \subseteq \Ann(F)$. The border apolarity lemma was shown to be an effective tool in the study of ranks of polynomials. See for instance \cite{CHL19}, \cite{GMR20} and \cite{HMV20} as a demonstration of its applicability.

As a consequence of the border apolarity lemma, the understanding of the closure involved in the definition of the secant varieties can be shifted to the problem of understanding the closure of the locus of radical ideals in the corresponding multigraded Hilbert scheme. This paper is concerned with finding conditions (necessary or sufficient) for a point $[I]\in \Hilb_{S[\PP^n]}^{h_{r, n}}$ to be in $\Slip_{r, n}$. In fact, the theory works in greater generality. One may replace $\CC$ by an algebraically closed field of arbitrary characteristic and replace $\PP^n$ by any smooth, projective toric variety $X$. Then the homogeneous coordinate ring $S$ of $\PP^n$ is replaced by the more general notion of a Cox ring of $X$ (see \cite{Cox95}). In this paper we restrict our attention to the case of projective space, but we intend to investigate analogous problems in greater generality in the future. 

Let $\Bbbk$ be a fixed algebraically closed field. In what follows we work in the categories of $\Bbbk$-algebras and $\Bbbk$-schemes. Given a point $x$ of a quasi-projective $\Bbbk$-scheme, we say that $x$ is closed if it is closed in the Zariski topology. This is equivalent to the condition that the residue field $\kappa(x)$ is the base field $\Bbbk$ (see \cite[Cor.~3.36]{GW10}). Let $n$ be a positive integer and $S=S[\mathbb{P}^n]$ be the homogeneous coordinate ring of $\mathbb{P}^n$, i.e. $S=\Bbbk[\alpha_0,\ldots, \alpha_n]$ is a standard $\mathbb{Z}$-graded polynomial ring. Fix also a positive integer $r$. 
Let $h\colon \mathbb{Z} \to \mathbb{Z}$ be the Hilbert function of $S[\mathbb{P}^n]/I$ where $I$ is a homogeneous saturated ideal of $S$ defining a zero-dimensional closed subscheme of $\mathbb{P}^n$ of length $r$. 
The multigraded Hilbert scheme corresponding to $S[\PP^n]$ and $h$ will be denoted by $\Hilb_{S[\mathbb{P}^n]}^h$. Let $\Sip_{h, n}$ denote the subset of closed points of $\Hilb_{S[\PP^n]}^{h}$ corresponding to saturated ideals of $r$-tuples of distinct points in $\mathbb{P}^n$. Let $\Slip_{h, n}$ be the closure of $\Sip_{h, n}$ in $\Hilb_{S[\PP^n]}^{h}$. 
Of special interest is the function  $h_{r, n}\colon \mathbb{Z}\to\mathbb{Z}$ given by $h_{r, n}(a) = \min \{\dim_\Bbbk S[\PP^n]_a, r\}.$ Observe that $h_{r, n}$ is the Hilbert function of the quotient $S[\PP^n]/I$ where $I$ is a saturated, homogeneous ideal of $r$-points in $\PP^n$ in general position. In that case, $\Slip_{r, n}:=\Slip_{h_{r, n}, n}$ is an irreducible component of $\Hilb_{S[\PP^n]}^{h_{r, n}}$. See \cite{buczyska2019apolarity} for the proof of this claim and more about these loci of $\Hilb_{S[\PP^n]}^{h_{r, n}}$.
We always consider $\Slip_{r,n}$ and other irreducible components of $\Hilb_{S[\PP^n]}^{h_{r,n}}$ with reduced scheme structure.
We will denote by $\mathfrak{m}$ the irrelevant ideal $(\alpha_0, \ldots, \alpha_n)$ of $S[\PP^n]$. Given a homogeneous ideal $\mathfrak{a}\subseteq S[\PP^n]$ we denote by $\sqrt{\mathfrak{a}}$ its radical and by $\overline{\mathfrak{a}}$ its saturation with respect to $\mathfrak{m}$.

After introducing the above general notation we may state the main results of the  paper. In Section~\ref{s:necessary_condition} we present a necessary condition for a closed point $[I]\in \Hilb_{S[\PP^n]}^{h_{r, n}}$ to be in $\Slip_{r, n}$. In fact, the same argument applies more generally in the case of $\Hilb_{S[\PP^n]}^{h}$ for any function $h\colon \mathbb{Z} \to \mathbb{Z}$ that is the Hilbert function of $S[\PP^n]/I$, where $I$ is a saturated homogeneous ideal defining a zero-dimensional closed subscheme of $\mathbb{P}^n$. 

\begin{theorem}\label{t:hilbert_function_of_square_criterion}
Let $n,r\geq 1$ be integers and $h\colon \mathbb{Z} \to \mathbb{Z}$ be the Hilbert function of $S[\PP^n]/I$ where $I$ is a saturated, homogeneous ideal defining a zero-dimensional, length $r$ closed subscheme of $\mathbb{P}^n$. Define $e=\min \{a\in \mathbb{Z} | h(a) = r\}$
and let $[I_0] \in \Hilb_{S[\PP^n]}^{h}$ be a closed point. If $[I_0] \in \Slip_{h,n}$, then $H_{S[\PP^n]/I_0^k}(d)\geq r\cdot \dim_\Bbbk S[\PP^n]_{k-1}$ for every positive integer $k$ and for every $d\geq ke+k$.
\end{theorem}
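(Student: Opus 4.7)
The plan is to reduce the claim to the case where $I_0$ is itself a radical ideal (i.e., $[I_0] \in \Sip_{h,n}$) via a flat family and an upper semi-continuity argument, and then handle that case by comparing the ordinary power $I^k$ with the symbolic power $I^{(k)}$ and invoking a classical regularity estimate for fat-points schemes.

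For the reduction, since $[I_0] \in \Slip_{h,n} = \overline{\Sip_{h,n}}$, one can choose an irreducible curve $T$ mapping to $\Hilb_{S[\PP^n]}^h$ whose image meets $\Sip_{h,n}$ densely and sends a distinguished point $0 \in T$ to $[I_0]$. Let $\mathcal{I}$ be the associated universal ideal, flat over $T$, so that each graded piece $\mathcal{I}_a$ is locally free on $T$. For fixed $d$ and $k$, the graded multiplication map
\[
\bigl(\mathcal{I}^{\otimes_{\mathcal{O}_T} k}\bigr)_d \longrightarrow S_d \otimes_\Bbbk \mathcal{O}_T
\]
is a morphism of locally free $\mathcal{O}_T$-modules; since cokernel commutes with base change, its coherent cokernel has fiber $(S/I_t^k)_d$ at $t$. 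Upper semi-continuity of fiber dimensions then yields
\[
\dim_\Bbbk (S/I_0^k)_d \;\geq\; \dim_\Bbbk (S/I_t^k)_d
\]
for $t$ in a dense open of $T$, so it suffices to establish the bound for $[I] \in \Sip_{h,n}$.

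Now let $I = \bigcap_{i=1}^r \mathfrak{m}_{p_i}$ be the saturated ideal of $r$ distinct points with Hilbert function $h$. The $k$-th symbolic power $I^{(k)} = \bigcap_i \mathfrak{m}_{p_i}^k$ is saturated and equals the ideal of the fat-points scheme $Z_k \subseteq \PP^n$ of length $\ell_k := r\binom{n+k-1}{n} = r \cdot \dim_\Bbbk S_{k-1}$. The containment $I^k \subseteq I^{(k)}$ gives $H_{S/I^k}(d) \geq H_{S/I^{(k)}}(d)$, and the ideal-sheaf sequence of $Z_k$, together with $H^1(\mathcal{O}_{\PP^n}(d)) = 0$ for $d \geq 0$, yields
\[
H_{S/I^{(k)}}(d) \;=\; \ell_k \;-\; h^1\!\bigl(\PP^n,\,\mathcal{I}_{Z_k}(d)\bigr).
\]
Since $h(e) = r$ forces the Castelnuovo--Mumford regularity of $I$ to equal $e+1$, a classical estimate of the form $\operatorname{reg}(I^{(k)}) \leq k \operatorname{reg}(I)$ for reduced zero-dimensional ideals gives $\operatorname{reg}(I^{(k)}) \leq k(e+1) = ke+k$, whence $h^1(\mathcal{I}_{Z_k}(d)) = 0$ for all $d \geq ke+k-1$, and a fortiori for $d \geq ke+k$.

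The main obstacle is this final regularity estimate: one must justify the bound $\operatorname{reg}(I^{(k)}) \leq k \operatorname{reg}(I)$ for the ideal of an arbitrary set of $r$ distinct points with the prescribed Hilbert function $h$ (not merely points in sufficiently general position). Either a clean citation of such a result for fat points, or a self-contained argument exploiting the independence of conditions imposed by $\{p_1, \ldots, p_r\}$ on forms of degree $e$, is the most delicate piece of the proof.
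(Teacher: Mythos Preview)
Your semi-continuity reduction is essentially the same as the paper's: the paper works with the universal ideal sheaf $\mathscr{J}$ on the whole multigraded Hilbert scheme, forms $\mathscr{J}^k$, pushes forward the quotient, and observes that $[I]\mapsto H_{S/I^k}(d)$ is upper semi-continuous. Your version over a curve is a minor variant of this.

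Where you diverge is in the radical case. You pass to the symbolic power $I^{(k)}$, use $I^k\subseteq I^{(k)}$ to get $H_{S/I^k}(d)\geq H_{S/I^{(k)}}(d)$, and then need $\operatorname{reg}(I^{(k)})\leq k(e+1)$, which you flag as the delicate point. The paper avoids this detour entirely: it works with the \emph{ordinary} power throughout. First it shows (by elementary saturation arguments) that $\overline{I^k}=\overline{\mathfrak{p}_1^k\cap\cdots\cap\mathfrak{p}_r^k}$, so the Hilbert \emph{polynomial} of $S/I^k$ is exactly $r\dim_\Bbbk S_{k-1}$. Then it invokes Chandler's theorem $\operatorname{reg}(I^k)\leq k\,\operatorname{reg}(I)=k(e+1)$, valid because $I$ defines a scheme of dimension at most one. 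This yields the \emph{equality} $H_{S/I^k}(d)=r\dim_\Bbbk S_{k-1}$ for $d\geq ke+k$, which is stronger than the inequality you are aiming for.

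Your route is salvageable, and in fact the ``delicate'' bound you need follows from the very same Chandler theorem: since $I^{(k)}=\overline{I^k}$ for a reduced set of points, and saturation does not increase Castelnuovo--Mumford regularity, one gets $\operatorname{reg}(I^{(k)})\leq\operatorname{reg}(I^k)\leq k(e+1)$. So there is no genuine gap, only an unnecessary detour and a missing citation. The paper's approach buys you a cleaner statement (equality rather than inequality at the generic point) and avoids having to discuss symbolic powers or the inequality $\operatorname{reg}(\overline{J})\leq\operatorname{reg}(J)$ at all.
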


Based on Theorem \ref{t:hilbert_function_of_square_criterion}, we obtain in Theorem \ref{t:criterion_for_points_on_line} a necessary condition for an ideal $[I] \in \Hilb_{S[\mathbb{P}^n]}^{h_{r, n}}$ defining a subscheme of $\PP^n$ contained in a line to be in $\Slip_{r, n}$. As a corollary, we establish the following proposition concerning the minimal reducible example of $\Hilb_{S[\PP^n]}^{h_{r,n}}$.

\begin{proposition}\label{p:minimal_reducible}
For any positive integer $r$, the scheme $\Hilb_{S[\PP^1]}^{h_{r, 1}}$ is isomorphic to $\PP^r$ and in particular is irreducible. For $r\leq 3$, $\Hilb_{S[\PP^2]}^{h_{r, 2}}$ is irreducible. For $r=4$, $\Hilb_{S[\PP^2]}^{h_{4, 2}}$ is reducible. In fact, $[I] \notin \Slip_{4, 2}$ where $I = (\alpha_0\alpha_1, \alpha_0\alpha_2, \alpha_0^3, \alpha_1^4)$.
\end{proposition}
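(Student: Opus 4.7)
The plan is to prove the three parts of the proposition separately; the last one, which is the substantive use of Theorem \ref{t:hilbert_function_of_square_criterion}, I would treat in most detail.

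For the $\PP^1$ statement, I would show that any $[I] \in \Hilb_{S[\PP^1]}^{h_{r,1}}$ is principal: the Hilbert function forces $I_a = 0$ for $a<r$, $\dim I_r = 1$, and $\dim I_{r+k} = k+1$ for every $k \geq 0$, so for any nonzero $f \in I_r$ the inclusion $f\cdot S_k \subseteq I_{r+k}$ is an equality by a dimension count, giving $I = (f)$. This yields a bijection on closed points with $\PP(S_r) \cong \PP^r$, which I would promote to a scheme isomorphism via the universal property of the multigraded Hilbert scheme, applied to the tautological principal ideal on $\PP(S_r)$.

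For $r \leq 3$ in $\PP^2$: the cases $r = 1$ and $r = 2$ are direct, since for $r=2$ the observation $\dim I_1 = 1$ produces a unique linear form $\ell \in I$, exhibiting $\Hilb_{S[\PP^2]}^{h_{2,2}}$ as a $\PP^2$-bundle over the dual projective plane (after reducing to Part 1). The case $r = 3$ is the subtlest, and I expect it to be the principal obstacle: I would stratify $\Hilb_{S[\PP^2]}^{h_{3,2}}$ according to whether the three-dimensional conic space $I_2 \subset S_2$ has a common linear factor, using Theorem \ref{t:criterion_for_points_on_line} to control the collinear stratum and a direct structural analysis on the non-collinear one, and show in each case that the ideal is a limit of ideals of three distinct non-collinear points.

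For $r = 4$, I would apply Theorem \ref{t:hilbert_function_of_square_criterion} to $I = (\alpha_0\alpha_1, \alpha_0\alpha_2, \alpha_0^3, \alpha_1^4)$. Enumerating standard monomials by degree verifies $h_{S/I} = h_{4,2}$ and gives $e = 2$. Taking $k = 2$ and $d = ke + k = 6$, the criterion demands $H_{S/I^2}(6) \geq 4 \cdot \dim S_1 = 12$. Since $I^2$ is again a monomial ideal, generated by the ten pairwise products of generators of $I$, I would perform a case analysis on the $\alpha_0$-degree of a degree-six monomial and find that the standard monomials are exactly the seven of the form $\alpha_1^b\alpha_2^{6-b}$ and the four of the form $\alpha_0\alpha_1^b\alpha_2^{5-b}$ with $b \leq 3$, so $H_{S/I^2}(6) = 11 < 12$. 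This violates the necessary condition, forcing $[I] \notin \Slip_{4,2}$, and reducibility of $\Hilb_{S[\PP^2]}^{h_{4,2}}$ follows because $\Slip_{4,2}$ is one of its irreducible components. Once the combinatorics of $I^2$ are in hand, Part 3 reduces to this finite check; the hardest step remains the $r=3$ case in Part 2.
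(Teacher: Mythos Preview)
Your Part~1 and Part~3 are both correct. Part~1 matches the paper's argument. For Part~3, the paper takes a slightly different route: it invokes Theorem~\ref{t:criterion_for_points_on_line} rather than Theorem~\ref{t:hilbert_function_of_square_criterion}. One checks that $\overline{I}=(\alpha_0,\alpha_1^4)$, so $H_{S/\overline{I}}(1)=2$, and then condition~\eqref{eq:condition_dagger} with $r=4$ reads $(\overline{I})^2\subseteq I$, which fails since $\alpha_0^2\notin I_2$. Your direct computation of $H_{S/I^2}(6)=11<12$ via Theorem~\ref{t:hilbert_function_of_square_criterion} is equally valid (indeed Theorem~\ref{t:criterion_for_points_on_line} is proved using Theorem~\ref{t:hilbert_function_of_square_criterion}), and your monomial count is accurate. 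Either way works; the paper's is quicker once Theorem~\ref{t:criterion_for_points_on_line} is in hand, yours is more self-contained.

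Part~2 has a genuine gap and a missed simplification. The gap: Theorem~\ref{t:criterion_for_points_on_line} carries the hypothesis $r\geq 4$, so you cannot cite it to ``control the collinear stratum'' when $r=3$. You would have to redo its proof in this degenerate case, and your sketch does not indicate how. More importantly, the paper handles all of $r\leq 3$ uniformly and almost for free via Theorem~\ref{t:almost_general_implies_lip}: by Lemma~\ref{l:properties_of_hilbert_function_of_saturated_ideal}, for any $[I]\in\Hilb_{S[\PP^2]}^{h_{r,2}}$ with $r\leq 3$ the Hilbert function of $S/\overline{I}$ differs from $h_{r,2}$ in at most one degree (for $r=3$ the only possibilities are $(1,3,3,\ldots)$ and $(1,2,3,3,\ldots)$), hence satisfies~\eqref{eq:condition_star}, and Theorem~\ref{t:almost_general_implies_lip} gives $[I]\in\Slip_{r,2}$ immediately. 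So what you flagged as ``the principal obstacle'' is in fact a one-line consequence of the paper's sufficient condition; no stratification is needed.
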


Proposition \ref{p:minimal_reducible} suggests that pathologies of the multigraded Hilbert schemes $\Hilb_{S[\PP^n]}^{h_{r,n}}$ occur for much simpler examples than for the corresponding Hilbert schemes $\mathcal{H}ilb_{r}(\PP^n)$. Therefore, it is expected that these pathologies might be studied more easily in the case of multigraded Hilbert schemes.

Theorem \ref{t:criterion_for_points_on_line} is then applied in Section \ref{s:applications} to the study of polynomials with small border rank. It seems that the criterion of Theorem \ref{t:criterion_for_points_on_line} can be substantially generalized. This is a work in progress by the author.

In Section \ref{s:sufficient}, for a fixed positive integer $r$, we consider functions $f\colon \mathbb{Z} \to \mathbb{Z}$ satisfying the condition:
\begin{equation}\label{eq:condition_star}
\text{there exist } e,d\in \mathbb{Z}_{>0} \text{ such that } f(a) = \begin{cases}
h_{r, 2}(a) & \text{ if } a\neq e\\
d & \text{ if }a=e.
\end{cases}
\tag{$\star$}
\end{equation}
We prove the following sufficient condition for a closed point $[I]\in \Hilb_{S[\PP^2]}^{h_{r, 2}}$ to be in $\Slip_{r, 2}$.

\begin{theorem}\label{t:almost_general_implies_lip}
Let $r$ be a positive integer and consider a closed point $[I_0]$ of the multigraded Hilbert scheme $\Hilb_{S[\PP^2]}^{h_{r, 2}}$. If the Hilbert function of $S[\PP^2]/\overline{I_0}$ satisfies \eqref{eq:condition_star}, then $[I_0]\in \Slip_{r, 2}$.
\end{theorem}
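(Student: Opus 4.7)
The plan is to realize $[I]$ as a limit of radical ideals of $r$ distinct points in $\PP^2$. Let $Z\subseteq \PP^2$ be the length-$r$ zero-dimensional subscheme defined by $\overline{I}$. By Fogarty's theorem the Hilbert scheme $\mathcal{H}ilb_r(\PP^2)$ is smooth and irreducible, so $[Z]$ lies in its smoothable locus. Choose a flat family $\mathcal{Z}\to T$ over a smooth pointed curve $(T,0)$ with $\mathcal{Z}_0 = Z$ and $\mathcal{Z}_t$ consisting of $r$ distinct points in sufficiently general position for $t\neq 0$, so that $H_{S[\PP^2]/I(\mathcal{Z}_t)} = h_{r, 2}$. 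The induced morphism $T\setminus\{0\}\to \Sip_{r, 2}$ extends, by properness of $\Hilb_{S[\PP^2]}^{h_{r, 2}}$, to $T\to \Slip_{r, 2}$; write $[I^*]$ for the image of $0$, so that $[I^*]\in \Slip_{r, 2}$.

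I would then identify $I^*$ degree by degree. A continuity argument shows $I^*\subseteq \overline{I}$: sections in $I^*_a$ are limits of sections vanishing on $\mathcal{Z}_t$ and hence vanish on $Z$. Since also $H_{S[\PP^2]/I^*}=h_{r, 2}$, condition \eqref{eq:condition_star} forces $I^*_a = \overline{I}_a$ for every $a\neq e$, while $I^*_e$ is a subspace of $\overline{I}_e$ of codimension $h_{r, 2}(e)-d$ containing $W := S[\PP^2]_1\cdot \overline{I}_{e-1}$. Exactly the same description applies to the given ideal $I$ (using $(\star)$ and $H_{S[\PP^2]/I}=h_{r, 2}$). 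Hence both $[I]$ and $[I^*]$ lie in the image of the morphism $\mathcal{G}\to \Hilb_{S[\PP^2]}^{h_{r, 2}}$ sending $V\mapsto I^V$, where $\mathcal{G}$ is the Grassmannian of subspaces $V\subseteq \overline{I}_e$ containing $W$ of codimension $h_{r, 2}(e)-d$, and $I^V$ is given by $I^V_a=\overline{I}_a$ for $a\neq e$ and $I^V_e=V$.

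It remains to show that the entire image of $\mathcal{G}$ lies in $\Slip_{r, 2}$, equivalently that every $V\in \mathcal{G}$ arises as $I^*_e$ for some smoothing of $Z$. To this end I would construct an irreducible incidence variety $\mathcal{F}\subseteq \mathcal{H}ilb_r(\PP^2)\times \Hilb_{S[\PP^2]}^{h_{r, 2}}$ as the closure of the graph of $[Z']\mapsto [I(Z')]$ taken over the open dense locus of $[Z']$ with $H_{S[\PP^2]/I(Z')}=h_{r, 2}$. Then $\mathcal{F}$ is irreducible, its projection to $\Hilb_{S[\PP^2]}^{h_{r, 2}}$ is contained in $\Slip_{r, 2}$, and the fiber of $\mathcal{F}\to \mathcal{H}ilb_r(\PP^2)$ over $[Z]$ embeds (via $V\mapsto (Z,I^V)$) into $\mathcal{G}$. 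The goal is to verify that this embedding is surjective, which would complete the proof.

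The main obstacle is precisely this surjectivity: that by varying the smoothing, every admissible $V$ is realized as the degree-$e$ limit. I would attempt this via a tangent-space calculation at $[Z]$, identifying the $2r$-dimensional tangent space $T_{[Z]}\mathcal{H}ilb_r(\PP^2)$ with first-order perturbations of the degree-$e$ component of $I(Z_t)$, and showing the resulting infinitesimal map into the tangent space of $\mathcal{G}$ is surjective; a dimension count should confirm this once the codimension $h_{r, 2}(e)-d$ of $V$ in $\overline{I}_e$ is correctly accounted for. Alternatively, one can try a local-to-global argument: perturb each Artinian component of $Z$ independently into reduced subschemes and stitch the resulting local subspace variations into a global $V\in\mathcal{G}$.
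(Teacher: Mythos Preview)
Your setup correctly isolates the issue, but the step you flag as the ``main obstacle'' is not a reduction at all: it is the original statement. Since $\varphi_{r,\PP^2}\colon\Hilb_{S[\PP^2]}^{h_{r,2}}\to\mathcal{H}ilb_r(\PP^2)$ is an everywhere-defined morphism, the closure $\mathcal{F}$ of the graph you describe is nothing but the graph of $\varphi_{r,\PP^2}|_{\Slip_{r,2}}$, hence isomorphic to $\Slip_{r,2}$ via the second projection. The fiber of $\mathcal{F}\to\mathcal{H}ilb_r(\PP^2)$ over $[Z]$ is therefore $\varphi_{r,\PP^2}^{-1}([Z])\cap\Slip_{r,2}$, and asking whether it surjects onto $\mathcal{G}$ is exactly asking whether every $[I]$ with $\overline{I}=I(Z)$ lies in $\Slip_{r,2}$. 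Your proposed tangent-space fix is also unconvincing: you would need first-order smoothings of the fixed $Z$ to sweep out all first-order variations of $V$ in $\mathcal{G}$, and there is no a priori reason this holds at an arbitrary $Z$; indeed the paper later exhibits a point of $\Slip_{8,2}$ that is singular yet lies on no other component, so tangent dimensions do not behave uniformly across the stratum.

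The paper avoids fixing $Z$. After reducing to the strict case $\dim_\Bbbk S_{e-1}<d<r<\dim_\Bbbk S_e$, it observes that the whole stratum $V_f=\{[J]:H_{S/\overline{J}}=f\}\subseteq\Hilb_S^{h_{r,2}}$ is irreducible, since it fibers in your Grassmannians $\mathcal{G}$ over the irreducible Gotzmann stratum $U_f\subseteq\mathcal{H}ilb_r(\PP^2)$. It then constructs one explicit monomial ideal $I_f\in V_f$, exhibits a saturated ideal $K_f$ with $\operatorname{in}_<(K_f)=I_f$ (so $[I_f]\in\Slip_{r,2}$), and proves $\dim_\Bbbk\Hom_S(I_f,S/I_f)_0\le 2r$ via a substantial homological and combinatorial computation. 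Thus $[I_f]$ is a smooth point of $\Hilb_S^{h_{r,2}}$ lying in $\Slip_{r,2}$, and the irreducible set $V_f$ through it is forced into $\Slip_{r,2}$. The work is concentrated in that single tangent-space bound at one carefully chosen point, not in controlling the fiber over every $Z$.
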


We also comment in Remarks \ref{r:generalization_1} and \ref{r:generalization_2} that natural generalizations of this result to the case of arbitrary dimension $n$ are not true.

\section{A necessary condition in the case of projective space}\label{s:necessary_condition}

We fix a positive integer $n$ and write $S$ for $S[\mathbb{P}^n] = \Bbbk[\alpha_0,\ldots, \alpha_n]$. Recall from Section \ref{s:introduction} that we denote the irrelevant ideal $(\alpha_0, \ldots, \alpha_n)\subseteq S$ by $\mathfrak{m}$. Moreover, given a homogeneous ideal $\mathfrak{a}\subseteq S$ we denote by $\sqrt{\mathfrak{a}}$ its radical and by $\overline{\mathfrak{a}}$ its saturation with respect to $\mathfrak{m}$.

\subsection{Algebraic results}
In this subsection we present some algebraic results concerning homogeneous ideals that will be used in the proof of Theorem \ref{t:hilbert_function_of_square_criterion}. 

Given a homogeneous ideal $\mathfrak{a}$ in $S$ we will consider the condition:
\begin{equation}\label{eqn:condition_asterisk}
\text{there exists a positive integer } d \text{ such that } S_d\subseteq \mathfrak{a} \tag{$\ast$}.
\end{equation}

The following lemma gives some simple properties of condition \eqref{eqn:condition_asterisk}.

\begin{lemma}\label{l:condition_asterisk}
Let $m\geq 2$ be an integer and $\mathfrak{a}$, $\mathfrak{b}$, $\mathfrak{a}_1, \ldots, \mathfrak{a}_m$ be homogeneous ideals of $S$. Then:
\begin{itemize}\itemsep0em
\item[(i)] $\overline{\mathfrak{a}\cap \mathfrak{b}} = \overline{\mathfrak{a}}\cap \overline{\mathfrak{b}}$;
\item[(ii)] $\mathfrak{a}+\mathfrak{b}$ satisfies \eqref{eqn:condition_asterisk} if and only if $\sqrt{\mathfrak{a}}+\sqrt{\mathfrak{b}}$ satisfies \eqref{eqn:condition_asterisk};
\item[(iii)] If $\mathfrak{a}_i+\mathfrak{a}_m$ satisfies \eqref{eqn:condition_asterisk} for all $i=1,\ldots, m-1$, then $\mathfrak{a}_1\cdot \ldots \cdot \mathfrak{a}_{m-1}+\mathfrak{a}_m$ and $\mathfrak{a}_1\cap \ldots \cap \mathfrak{a}_{m-1} + \mathfrak{a}_m$
satisfy \eqref{eqn:condition_asterisk};
\item[(iv)] If $\mathfrak{a}_1,\ldots, \mathfrak{a}_m$ are homogeneous ideals such that  $\mathfrak{a}_i+\mathfrak{a}_j$ satisfies \eqref{eqn:condition_asterisk} for all $1\leq i < j\leq m$, then $\overline{\mathfrak{a}_1\cdot\ldots\cdot \mathfrak{a}_m} = \overline{\mathfrak{a}_1\cap \ldots \cap \mathfrak{a}_m}$.
\end{itemize}
\end{lemma}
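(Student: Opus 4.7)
Parts (i)--(iii) are formal manipulations and I would dispatch them in order. For (i), unwinding the definition of saturation gives $f \in \overline{\mathfrak{a} \cap \mathfrak{b}}$ iff $f \mathfrak{m}^k \subseteq \mathfrak{a} \cap \mathfrak{b}$ for some $k$, which by taking the maximum of two exponents is equivalent to $f \in \overline{\mathfrak{a}} \cap \overline{\mathfrak{b}}$. For (ii), I would first reformulate \eqref{eqn:condition_asterisk} as $\mathfrak{m} \subseteq \sqrt{\mathfrak{c}}$ (one direction is trivial; conversely, if $\alpha_i^{N_i} \in \mathfrak{c}$ for each $i$ then a sufficiently large power of $\mathfrak{m}$ lies in $\mathfrak{c}$), and then reduce the claim to the standard identity $\sqrt{\mathfrak{a} + \mathfrak{b}} = \sqrt{\sqrt{\mathfrak{a}} + \sqrt{\mathfrak{b}}}$. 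Only $\supseteq$ requires work: if $f^N = a + b$ with $a^M \in \mathfrak{a}$ and $b^M \in \mathfrak{b}$, a binomial expansion of $(a+b)^{2M-1}$ places each term in either $\mathfrak{a}$ or $\mathfrak{b}$.

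For (iii), assume $S_{d_i} \subseteq \mathfrak{a}_i + \mathfrak{a}_m$ for $i = 1, \ldots, m-1$. I would show $S_{d_1 + \ldots + d_{m-1}} \subseteq \mathfrak{a}_1 \cdots \mathfrak{a}_{m-1} + \mathfrak{a}_m$ by factoring each monomial of that degree as $g_1 \cdots g_{m-1}$ with $\deg g_i = d_i$, decomposing $g_i = a_i + b_i$ with $a_i \in \mathfrak{a}_i$ and $b_i \in \mathfrak{a}_m$, and expanding the product: the single term $a_1 \cdots a_{m-1}$ lies in $\mathfrak{a}_1 \cdots \mathfrak{a}_{m-1}$, while every other term contains some factor $b_i \in \mathfrak{a}_m$. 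The intersection version follows at once from $\mathfrak{a}_1 \cdots \mathfrak{a}_{m-1} \subseteq \mathfrak{a}_1 \cap \ldots \cap \mathfrak{a}_{m-1}$.

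For (iv) I would induct on $m$. The base case $m = 2$ is the only nonformal input: pick $d$ with $\mathfrak{m}^d \subseteq \mathfrak{a} + \mathfrak{b}$, take any $f \in \mathfrak{a} \cap \mathfrak{b}$, and observe that for every $g \in \mathfrak{m}^d$ a decomposition $g = a + b$ with $a \in \mathfrak{a}$, $b \in \mathfrak{b}$ yields $fg = fa + fb \in \mathfrak{a} \mathfrak{b}$ (using $f \in \mathfrak{b}$ on the first summand and $f \in \mathfrak{a}$ on the second), so $f \in \overline{\mathfrak{a} \mathfrak{b}}$; the reverse inclusion $\overline{\mathfrak{a} \mathfrak{b}} \subseteq \overline{\mathfrak{a} \cap \mathfrak{b}}$ is automatic. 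For the inductive step, combine (i) with the inductive hypothesis to rewrite
\[ \overline{\mathfrak{a}_1 \cap \ldots \cap \mathfrak{a}_m} = \overline{\mathfrak{a}_1 \cdots \mathfrak{a}_{m-1}} \cap \overline{\mathfrak{a}_m} = \overline{\mathfrak{a}_1 \cdots \mathfrak{a}_{m-1} \cap \mathfrak{a}_m}, \]
and then apply the base case to $\mathfrak{a}_1 \cdots \mathfrak{a}_{m-1}$ and $\mathfrak{a}_m$; the hypothesis that their sum satisfies \eqref{eqn:condition_asterisk} is precisely the product case of (iii). The only real obstacle is bookkeeping: verifying that each intermediate ideal inherits \eqref{eqn:condition_asterisk} so that the induction closes, with no single step being deep.
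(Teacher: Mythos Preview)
Your proof is correct and follows essentially the same line as the paper's: the same element-wise check for (i), the same radical identity $\sqrt{\mathfrak{a}+\mathfrak{b}}=\sqrt{\sqrt{\mathfrak{a}}+\sqrt{\mathfrak{b}}}$ for (ii), the same expand-the-product trick for (iii), and the same induction built on the observation $(\mathfrak{a}+\mathfrak{b})(\mathfrak{a}\cap\mathfrak{b})\subseteq\mathfrak{a}\mathfrak{b}$ for (iv). The only cosmetic difference is that in (iii) you apply the expansion directly to the ideals $\mathfrak{a}_i$ rather than first reducing to radicals via (ii) as the paper does; this is a mild streamlining but not a different idea.
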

\begin{proof}
\begin{itemize}\itemsep0em
\item[(i)] Let $f\in \overline{\mathfrak{a}}\cap \overline{\mathfrak{b}}$. Then there are integers $k_1,k_2$ such that for all $i=0,\ldots, n$ we have $\alpha_i^{k_1}f\in \mathfrak{a}$ and $\alpha_i^{k_2}f \in \mathfrak{b}$. Therefore, $\alpha_i^{k_1+k_2}f\in \mathfrak{a}\cap \mathfrak{b}$ so that $\overline{\mathfrak{a}}\cap \overline{\mathfrak{b}}\subseteq \overline{\mathfrak{a}\cap \mathfrak{b}}$. The opposite inclusion follows from the fact that $\mathfrak{a}\cap \mathfrak{b}$ is contained in both $\mathfrak{a}$ and $\mathfrak{b}$ and thus, its saturation is contained in both $\overline{\mathfrak{a}}$ and $\overline{\mathfrak{b}}$.
\item[(ii)] It follows from \cite[Ex.~I.2.2]{H77} and \cite[Ex.~1.13~v)]{atiyah1994introduction}.
\item[(iii)] By part (ii) and \cite[Ex.~1.13~iii)]{atiyah1994introduction}, it is enough to check that 
$
\sqrt{\mathfrak{a}_1}\cap \ldots \cap \sqrt{\mathfrak{a}_{m-1}} + \sqrt{\mathfrak{a}_m}
$
satisfies~\eqref{eqn:condition_asterisk}. By assumptions and part (ii), there is an integer $d$ such that for all $i=0,\ldots, n$ and for all $j=1,\ldots,m-1$ there are elements $s_{ij}\in \sqrt{\mathfrak{a}_j}$ and $t_{ij}\in \sqrt{\mathfrak{a}_m}$ satisfying $\alpha_i^d = s_{ij} + t_{ij}$.
Therefore,
\[
\alpha_i^{d(m-1)} = \prod_{j=1}^{m-1} (s_{ij}+t_{ij}) = \prod_{j=1}^{m-1} s_{ij} + \Bigg(\prod_{j=1}^{m-1}(s_{ij}+t_{ij}) - \prod_{j=1}^{m-1}s_{ij}\Bigg).
\]
We have 
\[
\prod_{j=1}^{m-1} s_{ij}\in \sqrt{\mathfrak{a}_1}\cdot \ldots \cdot \sqrt{\mathfrak{a}_{m-1}}\subseteq \sqrt{\mathfrak{a}_1}\cap \ldots \cap \sqrt{\mathfrak{a}_{m-1}} 
\]
and  $\prod_{j=1}^{m-1}(s_{ij}+t_{ij}) - \prod_{j=1}^{m-1}s_{ij} \in \sqrt{\mathfrak{a}_m}$.
Hence
\[
\alpha_i\in \sqrt{\sqrt{\mathfrak{a}_1}\cap \ldots \cap \sqrt{\mathfrak{a}_{m-1}} + \sqrt{\mathfrak{a}_m}}
\]
for all $i=0,\ldots, n$ and thus, $\sqrt{\mathfrak{a}_1}\cap \ldots \cap \sqrt{\mathfrak{a}_{m-1}} + \sqrt{\mathfrak{a}_m}$ satisfies \eqref{eqn:condition_asterisk} by \cite[Ex.~I.2.2]{H77}.
\item[(iv)]
We prove it by induction on $m$ starting with $m=2$. Since $\mathfrak{a}_1\cdot \mathfrak{a}_2 \subseteq \mathfrak{a}_1\cap \mathfrak{a}_2$, it follows that $\overline{\mathfrak{a}_1\cdot\mathfrak{a}_2}\subseteq \overline{\mathfrak{a}_1\cap \mathfrak{a}_2}$. Moreover $(\mathfrak{a}_1+\mathfrak{a}_2)(\mathfrak{a}_1\cap \mathfrak{a}_2) \subseteq \mathfrak{a}_1\cdot \mathfrak{a}_2$. Since $\mathfrak{a}_1+\mathfrak{a}_2$ satisfies \eqref{eqn:condition_asterisk}, we obtain the inclusion $\mathfrak{a}_1\cap \mathfrak{a}_2\subseteq \overline{\mathfrak{a}_1\cdot \mathfrak{a}_2}$ and therefore, $\overline{\mathfrak{a}_1\cdot \mathfrak{a}_2} = \overline{\mathfrak{a}_1\cap\mathfrak{a}_2}$.

Let $m\geq 3$ and assume that (iv) is established for all integers smaller than $m$. By part (i) we have $
\overline{\mathfrak{a}_1\cap \ldots \cap \mathfrak{a}_m} = \overline{\mathfrak{a}_1\cap\ldots\cap \mathfrak{a}_{m-1}}\cap \overline{\mathfrak{a}_m}$.

Applying the inductive hypothesis for $m-1$ this can be rewritten as
\[
\overline{\mathfrak{a}_1\cap \ldots \cap \mathfrak{a}_m} =\overline{\mathfrak{a}_1\cap\ldots\cap \mathfrak{a}_{m-1}}\cap \overline{\mathfrak{a}_m}= \overline{\mathfrak{a}_1\cdot \ldots \cdot \mathfrak{a}_{m-1}}\cap \overline{\mathfrak{a}_m}.
\]
It follows from part (iii) that $\mathfrak{a}_1\cdot\ldots \cdot \mathfrak{a}_{m-1} + \mathfrak{a}_m$ satisfies \eqref{eqn:condition_asterisk}. Therefore, from part (i) and inductive hypothesis for $m=2$, we obtain
\[
\overline{\mathfrak{a}_1\cap \ldots \cap \mathfrak{a}_m} = \overline{\mathfrak{a}_1\cdot \ldots \cdot \mathfrak{a}_{m-1}}\cap \overline{\mathfrak{a}_m}=\overline{(\mathfrak{a}_1\cdot \ldots \cdot \mathfrak{a}_{m-1})\cap \mathfrak{a}_m} = \overline{\mathfrak{a}_1\cdot \ldots \cdot \mathfrak{a}_{m}},
\]
as claimed. 
\end{itemize}
\end{proof}

\begin{lemma}\label{l:saturation_of_power}
Let $I,J$ be homogeneous ideals of $S$ and $k$ be a positive integer. Then:
\begin{itemize}\itemsep0em
\item[(i)] There is an integer $d_0$ such that for all integers $d_1,\ldots, d_k\geq d_0$ the map $\bigotimes_{i=1}^k I_{d_i} \to I^k_{d_1+\ldots + d_k}$ induced by multiplication is surjective;
\item[(ii)] If $\overline{I} = \overline{J}$, then $\overline{I^k} = \overline{J^k}$.
\end{itemize}
\end{lemma}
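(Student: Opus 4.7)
The plan is to prove part (i) by a degree-shifting exchange argument, and then deduce part (ii) at once from (i) together with the standard fact that a homogeneous ideal agrees with its saturation in all sufficiently high degrees.

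For part (i), I would fix a finite homogeneous generating set $f_1,\ldots,f_s$ of $I$ with $\epsilon_i := \deg f_i$, and set $d_0 := \max_i \epsilon_i + 1$. The key elementary observation is that $I_d = S_1 \cdot I_{d-1}$ for every $d \geq d_0$: any element of $I_d$ is a sum of $h_i f_i$ with $\deg h_i = d - \epsilon_i \geq 1$, and each $h_i$ decomposes as a linear combination of monomials each divisible by some $\alpha_j$. Now consider a generator $g_1 \cdots g_k$ of $I^k_{d_1+\cdots+d_k}$ with $g_j \in I_{e_j}$ and $\sum_j e_j = \sum_j d_j$, and introduce the monovariant
\[
F(e_1,\ldots,e_k) := \sum_{j\,:\,e_j > d_j} (e_j - d_j).
\]
If $F = 0$, then $e_j \leq d_j$ for all $j$, which combined with $\sum e_j = \sum d_j$ forces $e_j = d_j$ and the product already lies in $I_{d_1}\cdots I_{d_k}$. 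Otherwise, I pick $j^*,j^{**}$ with $e_{j^*} > d_{j^*}$ and $e_{j^{**}} < d_{j^{**}}$. Since $e_{j^*} \geq d_{j^*}+1 \geq d_0+1$, the observation above applies, so I may write $g_{j^*} = \sum_\ell \alpha_\ell h_\ell$ with $h_\ell \in I_{e_{j^*}-1}$ and then
\[
g_1\cdots g_k = \sum_\ell g_1 \cdots h_\ell \cdots (\alpha_\ell g_{j^{**}}) \cdots g_k,
\]
where each summand has strictly smaller $F$. Iterating terminates and expresses the product as an element of $I_{d_1}\cdots I_{d_k}$, as required.

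For part (ii), I would enlarge $d_0$ so that additionally $I_d = \overline{I}_d$ and $J_d = \overline{J}_d$ for all $d \geq d_0$; this is possible because $\overline{I}/I$ and $\overline{J}/J$ are finite-length graded $S$-modules. The hypothesis $\overline{I} = \overline{J}$ then gives $I_d = J_d$ for $d \geq d_0$, and part (i) applied to both ideals yields
\[
I^k_{d_1+\cdots+d_k} = I_{d_1}\cdots I_{d_k} = J_{d_1}\cdots J_{d_k} = J^k_{d_1+\cdots+d_k}
\]
for all $d_1,\ldots,d_k \geq d_0$. Hence $I^k$ and $J^k$ agree in every degree $\geq kd_0$, which forces $\overline{I^k} = \overline{J^k}$. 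The only technical point to watch is in the exchange argument for (i): one must verify that every time a variable is extracted, the degree $e_{j^*}$ is genuinely at least $d_0+1$ (which is automatic from $e_{j^*} > d_{j^*} \geq d_0$) and that $F$ strictly decreases at each step (which holds because the index $j^*$ stays in the sum with contribution lowered by one, while $j^{**}$ does not enter the sum after the shift since $e_{j^{**}}+1 \leq d_{j^{**}}$). I expect this bookkeeping to be the main place where care is needed; the rest is formal.
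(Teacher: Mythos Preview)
Your proof is correct. For part (ii) you follow exactly the same route as the paper: enlarge $d_0$ so that $I$, $J$, $\overline{I}$, $\overline{J}$ all agree in degrees $\geq d_0$, and then invoke (i).

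For part (i) the paper is more direct. It simply takes $d_0$ to be the maximal degree of a minimal homogeneous generator of $I$. Any element of $I^k_{d_1+\cdots+d_k}$ is a sum of terms of the form $h\,f_{i_1}\cdots f_{i_k}$ with $h\in S$ homogeneous and $f_{i_j}$ generators; since each $d_j-\deg f_{i_j}\geq 0$ and these nonnegative integers sum to $\deg h$, every monomial of $h$ can be factored into $k$ monomials of exactly those degrees and absorbed into the respective $f_{i_j}$, giving a preimage in $I_{d_1}\otimes\cdots\otimes I_{d_k}$ in one step. Your exchange argument with the monovariant $F$ accomplishes the same redistribution of degree one unit at a time, and your bookkeeping (in particular that $e_{j^*}>d_{j^*}\geq d_0$ guarantees $I_{e_{j^*}}=S_1\cdot I_{e_{j^*}-1}$, and that $e_{j^{**}}+1\leq d_{j^{**}}$ keeps $j^{**}$ out of the sum) is sound. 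So your argument is correct but longer than necessary: reducing first to products of generators, as the paper does, eliminates the need for any induction.
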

\begin{proof}
\begin{itemize}\itemsep0em
\item[(i)] It is enough to consider a minimal set of homogeneous generators of $I$ and take $d_0$ to be the maximum of degrees of elements of this set, i.e. $d_0 = \max\{j \mid  \beta_{1,j}(S/I) \neq 0\}$, where $\beta_{i,j}$ are graded Betti numbers.
\item[(ii)] Let $d_0 = \max\{j \mid \beta_{1,j}(S/I)\neq 0\}$ and $e_0=\max\{j\mid \beta_{1,j}(S/J)\neq 0\}$. Let $r_0$ be an integer such that $I_{\geq r_0} = \overline{I}_{\geq r_0}$ and $J_{\geq r_0} = \overline{J}_{\geq r_0}$.  Let $s_0 = \max\{d_0, e_0, r_0\}$. Then for all $d_1,\ldots, d_k\geq s_0$ we have
\[
I^k_{d_1+\ldots +  d_k} = \overline{I}^k_{d_1+\ldots + d_k} = \overline{J}^k_{d_1 + \ldots + d_k} = J^k_{d_1+\ldots + d_k}
\]
where the first and last equality follow from part (i). In particular, $\overline{I^k} = \overline{J^k}$.
\end{itemize}
\end{proof}

After the above algebraic preparation we are able to compute the Hilbert polynomial of a power of a homogeneous radical ideal defining a closed, zero dimensional subscheme of projective space.

\begin{lemma}\label{l:hilbert_polynomial_of_power}
Let $I\subseteq S$ be a homogeneous radical ideal such that the Hilbert polynomial of the quotient algebra $S/I$ is constant, equal to $r$ for some positive integer $r$. Then for a positive integer $k$, the Hilbert polynomial of $S/I^k$ is constant equal to $r\cdot \dim_\Bbbk S_{k-1}$.
\end{lemma}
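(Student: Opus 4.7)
The plan is to decompose $I$ and reduce to a local computation at each point. Since $I$ is radical and $S/I$ has constant Hilbert polynomial $r$, the scheme $V(I)\subseteq\mathbb{P}^n$ consists of $r$ distinct reduced points $p_1,\ldots,p_r$; writing $\mathfrak{p}_j$ for the (automatically saturated) homogeneous prime ideal of $p_j$, one has $I=\mathfrak{p}_1\cap\cdots\cap\mathfrak{p}_r$. Because the $p_j$ are pairwise distinct, $\sqrt{\mathfrak{p}_i+\mathfrak{p}_j}=\mathfrak{m}$ for $i\neq j$, which trivially satisfies \eqref{eqn:condition_asterisk}; by Lemma~\ref{l:condition_asterisk}~(ii) the sum $\mathfrak{p}_i+\mathfrak{p}_j$ itself satisfies \eqref{eqn:condition_asterisk}, and since $\sqrt{\mathfrak{p}_i^k}=\mathfrak{p}_i$ the same part gives that $\mathfrak{p}_i^k+\mathfrak{p}_j^k$ satisfies \eqref{eqn:condition_asterisk} as well.

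With this pairwise comaximality in hand, I would first apply Lemma~\ref{l:condition_asterisk}~(iv) to $\mathfrak{p}_1,\ldots,\mathfrak{p}_r$ to obtain $\overline{\mathfrak{p}_1\cdots\mathfrak{p}_r}=\overline{\mathfrak{p}_1\cap\cdots\cap\mathfrak{p}_r}=\overline{I}$, and then invoke Lemma~\ref{l:saturation_of_power}~(ii) to conclude
\[
\overline{I^k}=\overline{(\mathfrak{p}_1\cdots\mathfrak{p}_r)^k}=\overline{\mathfrak{p}_1^k\cdots\mathfrak{p}_r^k},
\]
the second equality using commutativity of ideal multiplication. A second application of Lemma~\ref{l:condition_asterisk}~(iv), now to the family $\mathfrak{p}_1^k,\ldots,\mathfrak{p}_r^k$, yields $\overline{\mathfrak{p}_1^k\cdots\mathfrak{p}_r^k}=\overline{\mathfrak{p}_1^k\cap\cdots\cap\mathfrak{p}_r^k}$. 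Since saturation does not alter the Hilbert polynomial, the Hilbert polynomial of $S/I^k$ coincides with that of $S/(\mathfrak{p}_1^k\cap\cdots\cap\mathfrak{p}_r^k)$.

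To finish, after a linear change of coordinates sending $p_j$ to $[1:0:\cdots:0]$ one has $\mathfrak{p}_j=(\alpha_1,\ldots,\alpha_n)$, and a direct monomial count shows that $S/\mathfrak{p}_j^k$ has constant Hilbert polynomial $\dim_\Bbbk S_{k-1}=\binom{n+k-1}{n}$. The natural injection
\[
S/(\mathfrak{p}_1^k\cap\cdots\cap\mathfrak{p}_r^k)\hookrightarrow\bigoplus_{j=1}^r S/\mathfrak{p}_j^k
\]
has cokernel supported only at the irrelevant ideal---because $(\mathfrak{p}_i^k+\mathfrak{p}_j^k)_d=S_d$ for all $d\gg 0$ by the pairwise $(\ast)$ property, giving a graded Chinese Remainder isomorphism in large degrees---so summing over $j$ yields the desired constant value $r\cdot\dim_\Bbbk S_{k-1}$. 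The main obstacle is orchestrating the chain of saturation reductions correctly: once one sees that Lemma~\ref{l:condition_asterisk}~(iv) has to be invoked twice, sandwiching Lemma~\ref{l:saturation_of_power}~(ii), the remaining fat-point computation is routine.
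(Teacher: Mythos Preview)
Your proof is correct and follows essentially the same route as the paper: decompose $I$ as an intersection of homogeneous primes $\mathfrak{p}_j$, use Lemma~\ref{l:condition_asterisk}(iv) and Lemma~\ref{l:saturation_of_power}(ii) to reduce $\overline{I^k}$ to $\overline{\mathfrak{p}_1^k\cap\cdots\cap\mathfrak{p}_r^k}$, and then compute the degree of each fat point after a linear change of coordinates. The only cosmetic differences are that the paper observes directly that $\mathfrak{p}_i+\mathfrak{p}_j=\mathfrak{m}$ (both ideals being generated by linear forms of distinct points) rather than passing through radicals, and phrases the final additivity step as a degree count on the disjoint union $\Proj S/K$ rather than via a graded Chinese Remainder argument.
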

\begin{proof}
Let $\Proj S/I \subseteq \mathbb{P}^n$ be the set of (distinct) points $P_1, \ldots, P_r$ and define $\mathfrak{p}_i$ to be the prime ideal defining $P_i$. Then $I=\mathfrak{p}_1\cap \ldots \cap \mathfrak{p}_{r}$. Define $J=\mathfrak{p}_1\cdot \ldots \cdot \mathfrak{p}_r$. Since for all $1\leq i < j \leq m$ we have $\mathfrak{p}_i+\mathfrak{p}_j  = \mathfrak{m}$, it follows from Lemma \ref{l:condition_asterisk}(iv) that $\overline{I} = \overline{J}$. Hence $\overline{I^k} = \overline{J^k}$ by Lemma \ref{l:saturation_of_power}(ii). It is enough to show that the Hilbert polynomial of $S/J^k$ is $r\cdot \dim_\Bbbk S_{k-1}$. Let $K=\mathfrak{p}_1^k\cap \ldots \cap \mathfrak{p}_r^k$. 
Observe that $\mathfrak{p}_i^k + \mathfrak{p}_j^k$ satisfies \eqref{eqn:condition_asterisk} for every $1\leq i < j \leq m$ by Lemma \ref{l:condition_asterisk}(ii). Therefore, $\overline{K} = \overline{J^k}$ by Lemma \ref{l:condition_asterisk}(iv). Thus, it is enough to consider the Hilbert polynomial of $S/K$. Since, as a set, $\Proj S/K$ is the disjoint union of $r$-points $P_1,\ldots, P_r$ it is enough to show that the degree of $\Proj S/\mathfrak{p}_i^k$ is $\dim_\Bbbk S_{k-1}$ for every $i=1,\ldots, r$. This is clear, since up to a linear change of variables $\mathfrak{p}_i = (\alpha_1,\ldots, \alpha_n)$.
\end{proof}

\begin{remark}
In Lemma \ref{l:hilbert_polynomial_of_power} the assumption that $I$ is a radical ideal cannot be weakened to the assumption that $I$ is saturated. Indeed, $I=(\alpha_0^2, \alpha_0\alpha_1,\alpha_1^2) \subseteq S=\Bbbk[\alpha_0, \alpha_1,\alpha_2]$ is a saturated ideal and $S/I$ has Hilbert polynomial $3$, but $S/I^2$ has Hilbert polynomial $10$. 
\end{remark}

In Lemma \ref{l:hilbert_polynomial_of_power} we have calculated the Hilbert polynomial of $S/I^k$ for a homogeneous radical ideal $I$ defining a zero dimensional closed subscheme of a projective space and a positive integer $k$. The following proposition provides a bound on the least degree, from which the Hilbert function of $S/I^k$ agrees with the Hilbert polynomial of $S/I^k$.
Recall that for a finitely generated $\mathbb{Z}$-graded $S$-module $M$, its regularity $\operatorname{reg} M$ is defined to be $\operatorname{reg} M = \max\{j-i\mid \beta_{i,j}(M) \neq 0\}$.

\begin{proposition}\label{p:hilbert_function_of_power_in_sip}
Let $r,k$ be positive integers and $I\subseteq S$ be a homogeneous, radical ideal with the Hilbert polynomial of the quotient algebra $S/I$ equal to $r$. Define $e= \min\{a\in \mathbb{Z} \mid H_{S/I}(a) = r\}$. Then $H_{S/I^k}(d) = r\cdot \dim_\Bbbk S_{k-1}$ for $d\geq ke+k$.
\end{proposition}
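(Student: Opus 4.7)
The plan is to reduce the proposition to a computation with the saturated ideal $K := \bigcap_{i=1}^r \mathfrak{p}_i^k$, where $\mathfrak{p}_i$ is the prime ideal of the $i$-th point of $\Proj S/I$. By the proof of Lemma \ref{l:hilbert_polynomial_of_power}, $K = \overline{I^k}$, and it then suffices to show, for $d \geq ke + k$, that (i) $H_{S/K}(d) = r \cdot \dim_\Bbbk S_{k-1}$, and (ii) $(I^k)_d = K_d$.

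The main tool will be the following construction. Since $I$ is the saturated ideal of a reduced zero-dimensional scheme and $H_{S/I}(e) = r$, the evaluation map $S_e \to \bigoplus_i (S/\mathfrak{p}_i)_e \cong \Bbbk^r$ is surjective, so for each $i$ one finds $f_i \in S_e$ with $f_i \in \bigcap_{j \neq i} \mathfrak{p}_j$ but $f_i \notin \mathfrak{p}_i$. After a linear change of coordinates $\mathfrak{p}_i = (\alpha_1, \ldots, \alpha_n)$, so $\mathfrak{p}_i^k$ is $\mathfrak{p}_i$-primary and $f_i^k$ is a non-zerodivisor modulo $\mathfrak{p}_i^k$. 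Since $\dim_\Bbbk (S/\mathfrak{p}_i^k)_d = \dim_\Bbbk S_{k-1}$ whenever $d \geq k-1$, multiplication by $f_i^k$ promotes to an isomorphism $(S/\mathfrak{p}_i^k)_{d-ke} \xrightarrow{\sim} (S/\mathfrak{p}_i^k)_d$ for all $d \geq ke + k - 1$.

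For (i), the isomorphisms above show that the natural map $(\bigcap_{j \neq i} \mathfrak{p}_j^k)_d \twoheadrightarrow (S/\mathfrak{p}_i^k)_d$ is surjective in the same range of degrees (since $f_i^k \in (\bigcap_{j \neq i} \mathfrak{p}_j)^k \subseteq \bigcap_{j \neq i} \mathfrak{p}_j^k$). This forces the always-injective map $S/K \hookrightarrow \bigoplus_i S/\mathfrak{p}_i^k$ to be an isomorphism for $d \geq ke + k - 1$, yielding (i).

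For (ii), the key algebraic input is the containment $V_i \cdot \mathfrak{p}_i \subseteq V_i \cap \mathfrak{p}_i = I$, where $V_i := \bigcap_{j \neq i} \mathfrak{p}_j$; raising to the $k$-th power gives $V_i^k \cdot \mathfrak{p}_i^k \subseteq I^k$, and in particular $f_i^k \cdot \mathfrak{p}_i^k \subseteq I^k$. For any $h \in K$, since $h \in \mathfrak{p}_i^k$ for each $i$, we obtain $f_i^k h \in I^k$, and summing gives $F \cdot h \in I^k$ for $F := \sum_i f_i^k$. Hence $F$ annihilates the finite-length $\mathfrak{m}$-torsion module $K/I^k = H^0_{\mathfrak{m}}(S/I^k)$. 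Combined with the fact that $F \equiv f_i^k \pmod{\mathfrak{p}_i^k}$ acts as a non-zerodivisor on each $S/\mathfrak{p}_i^k$ and hence on $S/K$, one extracts the regularity bound $\operatorname{reg}(I^k) \leq k(e+1) = ke + k$, which forces $(K/I^k)_d = 0$ for $d \geq ke + k$. The main technical obstacle will be this final step: matching the global annihilation statement against the specific degree bound requires a delicate argument that interleaves the multiplication-by-$f_i^k$ isomorphisms from (i) with the ideal containments $V_i^k \mathfrak{p}_i^k \subseteq I^k$, so as to rule out socle elements of $S/I^k$ in degrees $\geq ke + k$.
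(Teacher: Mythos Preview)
Your argument for part (i) --- that $H_{S/K}(d) = r\dim_\Bbbk S_{k-1}$ for $d \geq ke+k-1$ --- is correct and pleasantly self-contained. The separating forms $f_i$ and the observation that $f_i^k \in \bigcap_{j\neq i}\mathfrak{p}_j^k$ acts as a unit on $S/\mathfrak{p}_i^k$ cleanly give surjectivity of $S/K \to \bigoplus_i S/\mathfrak{p}_i^k$ in the claimed range.

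The gap is in part (ii). You correctly observe that $F = \sum_i f_i^k$ annihilates $K/I^k$ and is a non-zerodivisor on $S/K$, but these two facts alone do \emph{not} yield the bound $(K/I^k)_{\geq ke+k} = 0$. Knowing that a degree-$ke$ element kills a finite-length module says nothing about where that module sits; one can have $\mathfrak{m}$-torsion modules annihilated by a fixed form of degree $ke$ with socle arbitrarily high. Your final paragraph essentially concedes this: you assert that ``one extracts the regularity bound $\operatorname{reg}(I^k)\leq ke+k$'' and then immediately flag this extraction as the ``main technical obstacle'' requiring a ``delicate argument'' you do not supply. That missing argument is exactly the content of the proposition.

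The paper avoids this difficulty entirely by invoking Chandler's theorem \cite[Thm.~6]{Chandler-1997}, which states that $\operatorname{reg}(I^k)\leq k\cdot\operatorname{reg}(I)$ whenever $\dim S/I \leq 1$. Since $\operatorname{reg}(I)=e+1$ (from $\operatorname{reg}(S/I)=e$, which holds because $S/I$ is one-dimensional Cohen--Macaulay with Hilbert function stabilizing at degree $e$), this gives $\operatorname{reg}(S/I^k)\leq ke+k-1$ immediately, and then the Hilbert function agrees with the Hilbert polynomial from that degree on. Your approach would, if completed, amount to reproving this special case of Chandler's bound by hand; that is possible (Chandler's own proof proceeds by Artinian reduction and is not long), but the ingredients you have assembled so far are not sufficient, and the interleaving you allude to would need to be made precise.
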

\begin{proof}
By Lemma \ref{l:hilbert_polynomial_of_power}, the Hilbert polynomial of $S/I^k$ is $r\cdot \dim_\Bbbk S_{k-1}$.  Hence, by \cite[Thm.~4.2]{eisenbud2005geometry}, it is enough to show that $ke+k-1\geq \operatorname{reg} S/I^k$. 
Since $\operatorname{reg} S/I = e$ by \cite[Thm.~4.2]{eisenbud2005geometry}, it follows from the definition of regularity that $\operatorname{reg} I = e+1$. Thus, $\operatorname{reg}I^k\leq ke+k$ by \cite[Thm.~6]{Chandler-1997}. Therefore, $\operatorname{reg}S/I^k \leq ke+k-1$. 
\end{proof}

\subsection{Proof of Theorem \ref{t:hilbert_function_of_square_criterion}}

\begin{proof}[Proof of Theorem \ref{t:hilbert_function_of_square_criterion}]
Let $\mathscr{J}$ be the universal ideal sheaf on $\Hilb_{S}^{h} \times \mathbb{A}^{n+1}$. 
Consider the quotient $\mathscr{P}$ of $\mathcal{O}_{\Hilb_{S}^{h} \times \mathbb{A}^{n+1}}\cong \mathcal{O}_{\Hilb_{S}^{h}}[\alpha_0,\ldots,\alpha_n]$ by $\mathscr{J}^k$
 and let $\mathscr{Q}$ be the pushforward of $\mathscr{P}$ under the projection morphism
 $
 \pi\colon\Hilb_{S}^{h} \times \mathbb{A}^{n+1} \to \Hilb_{S}^{h}\text{.}
 $
 Then $\mathscr{Q} = \bigoplus_{d}\mathscr{Q}_d$ is a quasi-coherent sheaf on $\Hilb_{S}^{h}$ with $\mathscr{Q}_d$ a coherent sheaf for every $d\in \mathbb{Z}$. Therefore, for every $d\in \mathbb{Z}$, the rank function $\varphi_d\colon \Hilb_{S}^{h} \to \mathbb{Z}$ given by
 $
 \varphi_d(x) = \dim_{\kappa(x)} (\mathscr{Q}_d)_x \otimes_{\mathcal{O}_{\Hilb_{S}^{h},x}} \kappa(x) 
 $
 is upper-semicontinuous (see \cite[II,~Ex.~5.8]{H77}). We claim that for a closed point $P=[I]\in \Hilb_{S}^{h}$ we have $\varphi_d(P) = H_{S/I^k}(d)$.
 
This can be checked affine locally, so we can replace $\Hilb_{S}^{h}$ by an affine open subset $U=\Spec A$ containing $[I]$. Let $J$ be the ideal in $A[\alpha_0, \ldots, \alpha_n]$ defining the restriction of $\mathscr{J}$ to $\pi^{-1}(U)$. Let $[I]$ in $U$ correspond to the maximal ideal $\mathfrak{n}$ of $A$. In what follows, we will consider $\Bbbk$ with $A$-module structure given by $A\to A_\mathfrak{n}/\mathfrak{n}A_\mathfrak{n} \cong \Bbbk$. By the definition of universal ideal sheaf we have $(A[\alpha_0,\ldots, \alpha_n]/J)\otimes_A \Bbbk \cong S/I$. Therefore, from the universal property of kernel, there is an induced map $J\otimes_A \Bbbk \to I$ and it is surjective by snake lemma applied to the diagram
\[
\begin{tikzcd}
& J\otimes_A \Bbbk \arrow[d, dotted] \arrow[r] & A[\alpha_0, \ldots, \alpha_n]\otimes_A \Bbbk \arrow[d, equal] \arrow[r] & (A[\alpha_0, \ldots, \alpha_n]/J)\otimes_A \Bbbk) \arrow[d, "\cong"] \arrow[r] & 0 \\
0\arrow[r] & I\arrow[r] & S \arrow[r] & S/I \arrow[r] & 0.
\end{tikzcd}
\]
Hence also the map $J^k\otimes_A \Bbbk \to I^k$ is surjective. The snake lemma applied to the diagram
\[
\begin{tikzcd}
& J^k\otimes_A \Bbbk \arrow[d] \arrow[r] & S \arrow[d, equal] \arrow[r] & S/(J^k\otimes_A \Bbbk) \arrow[d, dotted] \arrow[r] & 0 \\
0\arrow[r] & I^k \arrow[r] & S \arrow[r] & S/I^k \arrow[r] & 0
\end{tikzcd}
\]
implies that the dotted arrow induced by the universal property of cokernel is injective. Since it is clearly surjective, it is an isomorphism.
Thus
\[
\varphi_d([I]) = \dim_\Bbbk \big(S/(J^k \otimes_A \Bbbk)\big)_d = \dim_\Bbbk (S/I^k)_d = H_{S/I^k}(d)
\]
and the claim of the theorem follows from Proposition \ref{p:hilbert_function_of_power_in_sip}.
\end{proof}

\subsection{Ideals of subschemes contained in a line}
In this subsection we prove Theorem \ref{t:criterion_for_points_on_line} which is a consequence of Theorem \ref{t:hilbert_function_of_square_criterion}. It provides a necessary condition for an ideal $[I]\in \Hilb_{S}^{h_{r, n}}$ defining a subscheme of $\mathbb{P}^n$ contained in a line to be in $\Slip_{r, n}$.

We start with the following scheme-theoretic result.
\begin{lemma}\label{l:components_of_intersection}
Let $X$ be a scheme locally of finite type over $\Bbbk$. Let $Z_1,Z_2$ be irreducible closed subsets of $X$ of dimensions $d_1,d_2$, respectively. Let $P\in Z_1\cap Z_2$ be a closed point of the intersection and let $d = \dim_\Bbbk T_P X$. Then every irreducible component $W$ of $Z_1\cap Z_2$ such that $P\in W$ satisfies $\dim W \geq d_1+d_2-d$.
\end{lemma}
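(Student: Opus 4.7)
The plan is to reduce to the classical case where the ambient scheme is smooth of dimension $d$ at $P$, and then invoke Krull's height theorem via the diagonal trick. First, I would work locally around $P$: replace $X$ by an affine open neighborhood $U = \Spec A$ with $A$ finitely generated over $\Bbbk$, and embed $U$ as a closed subscheme of some $\mathbb{A}^N$, writing $A = \Bbbk[y_1,\ldots,y_N]/I$ with $P$ corresponding to the origin. Since $\dim_\Bbbk T_P X = d$, the image of $I$ in the cotangent space $\mathfrak{n}/\mathfrak{n}^2$ of $\mathbb{A}^N$ at the origin has dimension $N-d$. I would pick $f_{d+1},\ldots,f_N \in I$ whose images span this $(N-d)$-dimensional subspace; because $\mathcal{O}_{\mathbb{A}^N,P}$ is a regular local ring, such elements automatically form part of a regular system of parameters, hence a regular sequence. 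Therefore $Y := V(f_{d+1},\ldots,f_N)$ is a local complete intersection in $\mathbb{A}^N$, smooth of dimension $d$ at $P$; after restricting $Y$ to its smooth locus I may assume it is smooth of dimension $d$ throughout, and by construction $U$ is a closed subscheme of $Y$.

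Next, I would view $Z_1 \cap U$ and $Z_2 \cap U$ as irreducible closed subsets of $Y$ of dimensions $d_1$ and $d_2$ (both dimensions are preserved by restriction to a nonempty open subset and by closed embeddings). The classical diagonal identification realizes the set-theoretic intersection $Z_1 \cap Z_2$ locally as the fiber product $(Z_1 \times Z_2) \times_{Y \times Y} Y$, where $Y \hookrightarrow Y \times Y$ is the diagonal. Smoothness of $Y$ of dimension $d$ implies that $\Delta_Y$ is a regular embedding of codimension $d$, so near $(P,P)$ its ideal is generated by a regular sequence of length $d$. Pulling these $d$ generators back to $Z_1 \times Z_2$ produces $d$ elements locally cutting out $Z_1 \cap Z_2$ near $(P,P)$.

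The conclusion will then follow from Krull's height theorem: in the Noetherian local ring $\mathcal{O}_{Z_1 \times Z_2,(P,P)}$, every minimal prime over an ideal generated by $d$ elements has height at most $d$. Since $Z_1 \times Z_2$ is irreducible of dimension $d_1 + d_2$ and finitely generated over $\Bbbk$ (hence catenary, with $\dim \mathcal{O}_{Z_1 \times Z_2,(P,P)} = d_1+d_2$), any irreducible component $W$ of $Z_1 \cap Z_2$ passing through $P$ corresponds to such a minimal prime $\mathfrak{p}_W$ and satisfies $\dim W = (d_1+d_2) - \operatorname{ht}(\mathfrak{p}_W) \geq d_1 + d_2 - d$, as required. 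The main technical step, I expect, is the first one: carefully choosing elements of the defining ideal of $X$ whose linear parts at $P$ span a complement to $T_P X$ inside $T_P \mathbb{A}^N$, and verifying that the resulting subscheme is smooth of the correct dimension. Once the local embedding of $X$ into a smooth ambient of dimension exactly $\dim_\Bbbk T_P X$ is secured, the diagonal identification and Krull's theorem take care of the rest in a routine manner.
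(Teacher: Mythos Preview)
Your proof is correct and follows the same strategy as the paper: locally embed $X$ near $P$ into a smooth $d$-dimensional variety, then apply the classical intersection-dimension bound in the smooth ambient. The paper simply cites the Stacks Project for the existence of such an embedding and Fulton's \emph{Intersection Theory} \S8.2 for the bound, whereas you construct the embedding explicitly (choosing elements of the defining ideal with independent linear parts) and prove the bound via the diagonal trick plus Krull's height theorem---these are exactly the standard arguments behind the cited references.
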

\begin{proof}
By \cite[\href{https://stacks.math.columbia.edu/tag/0C2G}{Tag 0C2G}]{SP}, there exists an open neighborhood $U$ of $P$ in $X$ and a closed immersion $i\colon U \to Y$ where $Y$ is a smooth $d$-dimensional variety over $\Bbbk$. 
Let $W_1 = i(|Z_1|\cap |U|)$
 and $W_2 = i(|Z_2|\cap |U|)$, where 
 $| \cdot |$ denotes the underlying topological space. 
These are $d_1$ and $d_2$-dimensional irreducible closed subsets of $Y$, respectively. Therefore, every irreducible component of $W_1\cap W_2$ has dimension at least $d_1+d_2-d$ (see \cite[\S8.2]{F98}).
\end{proof}

Recall from Section \ref{s:introduction}, that $h_{r, n}$ is the Hilbert function of $S/I$ where $I$ is a saturated ideal of $r$ points of $\mathbb{P}^n$ in general position. Let $e=\min\{a\in \mathbb{Z} \mid h_{r, n}(a) = r\}$. We claim that under the assumptions $n\geq 2$, $r\geq 4$ we have

\begin{equation}\label{eq:estimation_in_terms_of_r}
2r-2 \geq 2e+2.
\end{equation}

Since $n\geq 2$ we get $\binom{2+e-1}{2}\leq \binom{n+e-1}{n}$. Therefore, since $\binom{n+e-1}{n}<r$, we obtain $2r>e(e+1)$ and thus, $2r\geq e(e+1)+2$. It follows that $2r-2\geq e^2+e$. If $e\geq 2$ then $e^2+e\geq 2e+2$ which gives the claimed relation $2r-2\geq 2e+2.$ If $e=1$ then $2r-2 \geq 2e+2$ since $r\geq 4$ by assumption.

Let $n\geq 2$ and $r\geq 4$ be integers. Consider the following condition on homogeneous ideals $I$ of $S$ with the Hilbert function of the quotient algebra $S/I$ equal to $h_{r, n}$:
\begin{equation}\label{eq:condition_dagger}
(\overline{I})^2\cdot \mathfrak{m}^{r-4}\subseteq I \tag{$\dagger$}.
\end{equation}
Observe that condition \eqref{eq:condition_dagger} is non-trivial only in degree $r-2$ since $\overline{I}_{ \geq r-1} = I_{ \geq r-1}$.

\begin{theorem}\label{t:criterion_for_points_on_line}
Let $n\geq 2$ and $r\geq 4$ be integers. Let $[I] \in \Hilb_{S}^{h_{r, n}}$ be such that $H_{S/\overline{I}}(1)=2$. If $[I]\in \Slip_{r, n}$ then $I$ satisfies \eqref{eq:condition_dagger}. Moreover, if $I$ satisfies \eqref{eq:condition_dagger}  and either $\operatorname{char} \Bbbk = 0$ or $\operatorname{char} \Bbbk \geq r-1$ then there exists a homogeneous ideal $J\subseteq S$ such that $[J]\in \Slip_{r, n}$ and $I_{\geq r-2} = J_{\geq r-2}$.
\end{theorem}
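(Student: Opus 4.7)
The plan is to tackle the two parts of the theorem separately, both leveraging Theorem~\ref{t:hilbert_function_of_square_criterion} with $k = 2$. The key numerical coincidence is that \eqref{eq:estimation_in_terms_of_r} is tailored so that $d = 2r - 2$ satisfies the hypothesis $d \geq 2e + 2$ of that theorem, giving the bound $H_{S/I^2}(2r - 2) \geq r(n+1)$. Matched with Lemma~\ref{l:hilbert_polynomial_of_power} and Proposition~\ref{p:hilbert_function_of_power_in_sip} applied to the radical ideal $\overline{I}$, this says that $I^2$ is asymptotically as close to $\overline{I^2}$ as it can possibly be.

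For the necessity of \eqref{eq:condition_dagger}, I would first, after a coordinate change, arrange $\overline{I}_1 = \langle \alpha_2, \ldots, \alpha_n \rangle$ and, using a generic degeneration of $r$ points in general position limiting to $[I]$, take the underlying scheme $\Proj S/\overline{I}$ to be $r$ distinct points $P_1, \ldots, P_r$ on the line $L = V(\alpha_2, \ldots, \alpha_n)$. Comparing Hilbert functions shows $I_d = \overline{I}_d$ for $d \geq r-1$, so $(\overline{I})^2 \mathfrak{m}^{r-4} \subseteq I$ is automatic in those degrees, reducing the problem to the single degree $d = r-2$. There $((\overline{I})^2 \mathfrak{m}^{r-4})_{r-2} = \langle \alpha_i \alpha_j : i, j \geq 2\rangle \cdot S_{r-4}$, while $\overline{I}_{r-2}/I_{r-2}$ is one-dimensional. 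For any curve $[I_t] \in \Sip_{r,n}$ limiting to $[I]$ with $P_i(t) \to P_i$ along tangent vectors $v_i$, the limit $I_{r-2}$ inside $\overline{I}_{r-2}$ is the kernel of a linear functional of the form $f \mapsto \sum_i c_i\, \partial_{v_i} f(P_i)$ encoding the first-order obstruction. Any generator $\alpha_i \alpha_j h$ with $i,j \geq 2$ of the subspace to be contained vanishes to order two along $L$, so each $\partial_{v_k}(\alpha_i \alpha_j h)(P_k) = 0$, placing such elements in $I_{r-2}$. The role of Theorem~\ref{t:hilbert_function_of_square_criterion} here is to ensure consistency of the semicontinuous picture, legitimizing the identification of $I_{r-2}$ via first-order obstructions; the main subtle point is verifying that a generic degeneration indeed renders $\overline{I}$ radical.

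For the existence of $J$ in the ``moreover'' clause, I would reverse the construction. Condition \eqref{eq:condition_dagger} says $I_{r-2}$ is the kernel of a linear functional on $\overline{I}_{r-2}$ vanishing on $((\overline{I})^2 \mathfrak{m}^{r-4})_{r-2}$, so by the first-order analysis above this functional can be written as $\sum_i c_i\, \partial_{v_i}(\cdot)(P_i)$ for some tangent vectors $v_i$ at the points $P_i$ of $\overline{I}$. For generic such $v_i$, the deformations $P_i + t v_i$ are in general position for $t \neq 0$, so their ideals $J_t$ give a family in $\Sip_{r,n}$; setting $J := J_0$ yields a point automatically in $\Slip_{r,n}$. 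Since $J$ and $I$ have the same saturation $\overline{I}$, one gets $J_d = \overline{I}_d = I_d$ for $d \geq r-1$, and matching in degree $r-2$ is forced by the choice of $v_i$. The main technical obstacle is showing that the limit $J$ in degree $r-2$ is determined purely by this first-order data rather than by higher-order corrections --- a standard transversality argument after choosing $v_i$ generically.
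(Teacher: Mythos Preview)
Your approach has genuine gaps in both parts.

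For the necessity direction, the central problem is your reduction to the case where $\overline{I}$ is radical. The ideal $I$ is given, hence so is $\overline{I}$; you cannot ``take'' $\Proj S/\overline{I}$ to be $r$ distinct points. A typical $\overline{I}$ here is $(\alpha_0,\ldots,\alpha_{n-2},\alpha_{n-1}^r)$, defining a highly non-reduced scheme, and nothing in your argument accommodates this. Even granting radicality, the assertion that the limit hyperplane $I_{r-2}\subseteq\overline{I}_{r-2}$ is cut out by a functional of the form $f\mapsto\sum_i c_i\,\partial_{v_i}f(P_i)$ is unjustified: limits in Grassmannians depend on the full infinitesimal structure of the approaching path, not merely on first-order tangent data, and you invoke Theorem~\ref{t:hilbert_function_of_square_criterion} only as a vague ``consistency check'' rather than actually deducing anything from it. The paper instead argues by contradiction via Gr\"obner degeneration: given $[I]\in\Slip_{r,n}$ failing~\eqref{eq:condition_dagger}, it passes to the initial ideal $I_0$ with respect to a weighted order so that $I_0$ is monomial, still lies in $\Slip_{r,n}$, and still fails~\eqref{eq:condition_dagger}; then an explicit count of monomials of degree $2r-2$ outside $I_0^2$ gives $H_{S/I_0^2}(2r-2)=r(n+1)-1$, contradicting Theorem~\ref{t:hilbert_function_of_square_criterion} together with~\eqref{eq:estimation_in_terms_of_r}. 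This sidesteps radicality entirely.

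For the ``moreover'' direction, your proposal again presupposes $\overline{I}$ radical, and the claim that every functional on $\overline{I}_{r-2}$ annihilating $((\overline{I})_1)^2\cdot S_{r-4}$ arises as $\sum_i c_i\,\partial_{v_i}(\cdot)(P_i)$ is an unperformed dimension count. More seriously, the assertion that the flat limit $J_0$ of the deformed ideals agrees with $I$ in degree $r-2$ is precisely the crux --- you call it a ``standard transversality argument'', but first-order agreement of the defining functionals does not automatically survive to the actual Grassmannian limit, and this is where all the content lies. The paper's route is global and dimension-theoretic: it introduces an intermediate multigraded Hilbert scheme $H_S^g$ and maps $\Hilb_S^{h_{r,n}}\xrightarrow{f_1}H_S^g\xrightarrow{f_2}\mathcal{H}ilb_r(\PP^n)$, identifies the locus $X\subseteq f_2^{-1}(V)$ cut out by~\eqref{eq:condition_dagger} over the line-locus $V$, shows $X$ is irreducible of dimension $\dim f_1(\Slip_{r,n})-1$, and then uses Lemma~\ref{l:components_of_intersection} together with an explicit tangent-space bound at a single Borel-fixed point (Lemma~\ref{l:calculation_of_tangent_space}) to force $f_1(\Slip_{r,n})\cap f_2^{-1}(V)=X$ set-theoretically.
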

\begin{proof}
Up to a linear change of variables, $\overline{I} = (\alpha_0,\ldots, \alpha_{n-2}, F_r(\alpha_{n-1}, \alpha_n))$ where $F_r$ is non-zero and homogeneous of degree $r$. Suppose that $[I]\in \Slip_{r, n}$ but it does not satisfy \eqref{eq:condition_dagger}. 
Consider the monomial order $>_{\mathbf{w}, \operatorname{lex}}$ on $S$ given by the weight vector $\mathbf{w} = (1,1, \ldots, 1, 2, 2) \in \mathbb{Z}^{n+1}$ and a lexicographic monomial order $\operatorname{lex}$ (see \cite[Ex.~2.4.11]{CLO}).
Let $I_0$ be the initial ideal of $I$ with respect to $>_{\mathbf{w}, \operatorname{lex}}$.
 Then $[I_0]\in \Slip_{r, n}$ is a monomial ideal with $\overline{I_0} = (\alpha_0,\ldots, \alpha_{n-2}, F'_r(\alpha_{n-1}, \alpha_n))$ where $F'_r$ is a monomial of degree $r$. Moreover, $I_0$ does not satisfy \eqref{eq:condition_dagger}.

 We shall show that $H_{S/I_0^2}(2r-2) = r(n+1)-1$ which contradicts Theorem \ref{t:hilbert_function_of_square_criterion} and Equation \eqref{eq:estimation_in_terms_of_r}. Since $I_0^2$ is a monomial ideal, it follows from \cite[Thm.~15.3]{eisenbud1995commutative}
that it is enough to show that the number of monomials of degree $2r-2$ outside $I_0^2$ is $r(n+1)-1$.

Let $\mathfrak{a}=(\alpha_0,\ldots, \alpha_{n-2})$ and $\mathfrak{b}=(\alpha_{n-1}, \alpha_n)$. Since $H_{S/I_0}(r-1) = r = H_{S/\overline{I_0}}(r-1)$, it follows that $\mathfrak{a}\cdot\mathfrak{m}^{r-2}\subseteq I_0$. Hence $\mathfrak{a}^2\cdot \mathfrak{m}^{2r-4}\subseteq I_0^2$. 

Moreover, $\mathfrak{b}^{2r-2} \cap I_0^2 = 0$ since $\mathfrak{b}^{2r-2} \cap \overline{I_0}^2=0$. Therefore, $I_0^2$ contains no monomial of degree $2r-2$ in variables $\alpha_{n-1}, \alpha_n$. There are $2r-1$ such monomials. Thus, it is enough to show that there are exactly $r(n-1)$ monomials of the form $\alpha_iM$ where $i\in\{0,\ldots, {n-2}\}$, $M$ is a monomial of degree $2r-3$ in variables $\alpha_{n-1}, \alpha_n$ and $\alpha_iM \not\in I_0^2$.

Since $I_0$ does not satisfy \eqref{eq:condition_dagger}, it contains all monomials of the form $\alpha_iN$ where $i\in\{0,\ldots, n-2\}$ and $N$ is a monomial in variables $\alpha_{n-1}, \alpha_n$ of degree $r-3$. Therefore, $I_0^2$ does not contain a monomial of the form $\alpha_iM$ with $i\in\{0,\ldots,n-2\}$ and $M$ a monomial of degree $2r-3$ in variables $\alpha_{n-1}, \alpha_n$ if and only if $F'_r$ does not divide $M$. There are exactly $r(n-1)$ such monomials $\alpha_iM$.

Now we will prove the second part of the theorem.
Let $g\colon \mathbb{Z}\to \mathbb{Z}$ be defined by
\[
g(a) = \begin{cases}
\dim_\Bbbk S_a & \text{ if }a\leq r-3\\
h_{r, n}(a) & \text{ otherwise}.
\end{cases}
\]
Consider the multigraded Hilbert scheme $\Hilb_{S}^g$. We have morphisms
\[
\Hilb_{S}^{h_{r, n}} \xrightarrow{f_1} \Hilb_{S}^g \xrightarrow{f_2} \mathcal{H}ilb_r(\mathbb{P}^n) 
\]
where on closed points, $f_1$ maps $[\mathfrak{a}]$ to $[\mathfrak{a}\cap \mathfrak{m}^{r-2}]$ and $f_2$ maps $[\mathfrak{b}]$ to $[\Proj S/\mathfrak{b}]$.

Let $V\subseteq \mathcal{H}ilb_r(\mathbb{P}^n)$ be the closed subset defined by the set of its closed points
\[
V(\Bbbk) = \{[Z]\in \mathcal{H}ilb_r(\mathbb{P}^n) \mid H_{S/I_Z}(1)=2\}
\]
where $I_Z$ denotes the saturated ideal of $S$ defining the closed subscheme $Z\subseteq \Proj S$. Note that $V$ is irreducible of dimension $2(n-1)+r$. Let $W = f_2^{-1}(V)$ be the set-theoretic inverse image. Over every closed point $[Z]$ of $V$, the fiber of $f_2$ is the projective space $\PP^{\binom{n+r-2}{n} - (r-1)-1}$. Indeed, $H_{S/I_Z}(r-2) = r-1$ and $H_{S/I_Z}(a)=r$ for $a\geq r-1$. Therefore, we need to choose a codimension one subspace of $(I_Z)_{r-2}$. It follows that $W$ is irreducible of dimension $\binom{n+r-2}{n} + 2n - 2$.

Let $X$ be the closed subset of $W$, whose closed points satisfy \eqref{eq:condition_dagger}. This locus is irreducible of dimension $(2(n-1)+r) + (n-1)(r-2)-1 = nr-1$ since now the codimension one subspace of $(I_Z)_{r-2}$ as above must contain the subspace $((I_Z)_1)^2\cdot \mathfrak{m}_{r-4}$. We claim that $W \cap f_1(\Slip_{r, n}) = X$ set-theoretically. Note that this will finish the proof. By the first part of the proof, we know that $W \cap f_1(\Slip_{r, n}) \subseteq X$. Since $X$ is irreducible of dimension $\dim f_1(\Slip_{r, n}) -1$, it is enough to show that there is a point $P\in W \cap f_1(\Slip_{r, n})$ satisfying $\dim_\Bbbk T_P \Hilb_S^g = \dim W + 1 = \binom{n+r-2}{n} + 2n - 1$ (see Lemma \ref{l:components_of_intersection}).

We have $f_1^{-1}(f_2^{-1}(V))\cap \Slip_{r, n} \neq \emptyset$. Therefore, $W\cap f_1(\Slip_{r,n})\neq \emptyset$. Hence there is a Borel-fixed ideal $I_0\subseteq S$, such that $[I_0]$ is in $W\cap f_1(\Slip_{r,n})$. It follows from the assumption on the characteristic of $\Bbbk$ and \cite[Thm.~15.23]{eisenbud1995commutative} that $\overline{I_0} = (\alpha_0,\ldots, \alpha_{n-2}, \alpha_{n-1}^{r})$ and
\[
I_0=(\alpha_0^{r-2}, \alpha_0^{r-3}\alpha_1, \ldots, \alpha_{n-2}\alpha_{n-1}\alpha_n^{r-4}, \alpha_{n-2}\alpha_n^{r-2}, \alpha_{n-1}^r)
\]
where the generators of degree $r-2$ are all monomials divisible by one of the forms $\alpha_0,\ldots, \alpha_{n-2}$ except for $\alpha_{n-2}\alpha_n^{r-3}$. Therefore, to conclude the proof, it is enough to show that $\dim_\Bbbk \Hom_S(I_0, S/I_0)_0 \leq \binom{n+r-2}{n} + 2n - 1$. This is computed in Lemma \ref{l:calculation_of_tangent_space}.
\end{proof}

\begin{lemma}\label{l:calculation_of_tangent_space}
Let $I_0 = (\alpha_0^{r-2}, \alpha_0^{r-3}\alpha_1,\ldots, \alpha_{n-2}\alpha_{n-1}\alpha_n^{r-4}, \alpha_{n-2}\alpha_n^{r-2},\alpha_{n-1}^{r})$
where generators of degree $r-2$ are all monomials of degree $r-2$ in the ideal $(\alpha_0,\ldots, \alpha_{n-2})$ except for $\alpha_{n-2}\alpha_n^{r-3}$. Then 
\[
\dim_\Bbbk \Hom_S(I_0, S/I_0)_0 \leq \binom{n+r-2}{n} + 2n - 1.
\]
\end{lemma}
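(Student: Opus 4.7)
The plan is to bound $\dim_\Bbbk \Hom_S(I_0, S/I_0)_0$ from above by parametrizing a degree-zero graded homomorphism $\varphi$ through its values on the minimal generators and tracking the constraints coming from pair (Koszul) syzygies. Such a $\varphi$ is specified by $v_\mu := \varphi(\mu) \in (S/I_0)_{r-2}$ for each of the $N := \binom{n+r-2}{n} - r$ minimal generators $\mu$ of degree $r-2$, together with $v_\ast := \varphi(\alpha_{n-2}\alpha_n^{r-2}) \in (S/I_0)_{r-1}$ and $v_\# := \varphi(\alpha_{n-1}^r) \in (S/I_0)_r$; each of these graded pieces has dimension $r$. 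The key structural observation is that multiplication by $\alpha_i$ with $i \le n-2$ annihilates $(S/I_0)_d$ for $d \in \{r-2,r-1,r\}$, and that $\alpha_{n-2}\alpha_n^{r-3} \in (S/I_0)_{r-2}$ is annihilated by the whole irrelevant ideal $\mathfrak{m}$.

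Writing $v_\mu = u_\mu + c_\mu \cdot \alpha_{n-2}\alpha_n^{r-3}$ with $u_\mu$ in the span of the $r-1$ pure monomials $\alpha_{n-1}^{a'}\alpha_n^{r-2-a'}$, the socle scalars $c_\mu$ are unconstrained by any syzygy and contribute $N$ free parameters. For the non-socle parts: if $\mu$ contains at least two factors from $\{\alpha_0, \ldots, \alpha_{n-2}\}$ (counted with multiplicity), picking $\alpha_i \mid \mu$ with $i \le n-2$, the pair syzygy between $\mu$ and $\alpha_{n-1}\mu/\alpha_i$ forces $\alpha_{n-1} u_\mu = 0$, hence $u_\mu = 0$. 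The remaining generators, those with a unique small-index factor $\alpha_j$, organize into chains $\mu_a := \alpha_j\alpha_{n-1}^a\alpha_n^{r-3-a}$ (with $a$ in the appropriate range for each $j$); the pair syzygy $(\mu_a, \alpha_{n-1}^r)$ yields $\alpha_{n-1}^{r-a} v_{\mu_a} = 0$, while the consecutive pair syzygies $\alpha_{n-1} v_{\mu_a} = \alpha_n v_{\mu_{a+1}}$ propagate along each chain, collapsing the coefficients of the $u_{\mu_a}$'s within each $j < n-2$ chain to just two independent parameters. For $j = n-2$, the further syzygies $(\mu_a, \alpha_{n-2}\alpha_n^{r-2})$ link these coefficients to those of $v_\ast$ and annihilate the coefficients of $v_\ast$ with indices $2, \ldots, r-2$, leaving three free parameters for this chain together with $v_\ast$. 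No pair syzygy constrains $v_\#$, so it contributes the full $r$ dimensions.

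Summing all contributions gives $N + 2(n-2) + 3 + r = \binom{n+r-2}{n} + 2n - 1$, the claimed upper bound. The main obstacle is the chain bookkeeping: one must verify, for each $j$, that the iterated relation $c^{a+1}_{a'} = c^a_{a'-1}$ combined with the boundary conditions $c^a_{a'} = 0$ for $a' < a$ (from the $\alpha_{n-1}^r$ syzygy) and $c^a_{r-2} = 0$ (from consecutive chain steps) leaves the claimed number of free parameters, and that all remaining pair syzygies collapse to trivial identities because of the annihilation property of the small-index variables.
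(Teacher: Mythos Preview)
Your proposal is correct and follows essentially the same route as the paper's proof: parametrize a degree-zero homomorphism by its values on the minimal monomial generators of $I_0$, then cut down the parameters using the Koszul syzygies among those generators. The paper displays the resulting constraints in matrix form, arriving at the identical count $(\binom{n+r-2}{n}-r) + (1+r) + 2(n-1)$; your grouping $N + 2(n-2) + 3 + r$ is just a different bookkeeping of the same contributions.

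Two minor differences worth noting. First, you isolate the socle class $\alpha_{n-2}\alpha_n^{r-3}$ from the outset and observe it is killed by every syzygy, which cleanly explains the $N$ free ``$\ast$'' entries that the paper records in its matrices. Second, for the chains $\mu_a=\alpha_j\alpha_{n-1}^a\alpha_n^{r-3-a}$ you invoke the extra syzygies with $\alpha_{n-1}^r$ to obtain the vanishing $c^a_{a'}=0$ for $a'<a$; the paper instead uses only the consecutive relations $\alpha_n M_{i,s}=\alpha_{n-1}M_{i,s+1}$, which already force these vanishings (the relation $e^s_0=0$ propagates along the diagonal via the recursion). Your additional syzygy is consistent but redundant, so the endpoint is the same.
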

\begin{proof}
Let $\mathfrak{n} = (\alpha_0,\ldots, \alpha_{n-2})$ and pick $\phi\in \Hom_S(I_0, S/I_0)_0$.
For every positive integer $a$ we identify the $\Bbbk$-vector space $(S/I_0)_a$ with the $\Bbbk$-vector space spanned by monomials of degree $a$ in $S$ that are not in $I_0$. 

Let $i,j\in \{0,\ldots, n-2\}$ and consider a monomial $\alpha_i\alpha_jM \in I_0$ of degree $r-2$. Then also $\alpha_i\alpha_{n-1}M \in I_0$. Therefore, $\alpha_{n-1}\phi(\alpha_i\alpha_jM) = \alpha_j\phi(\alpha_i\alpha_{n-1}M) = 0$. It follows that on generators $\alpha_0^{r-2},\ldots, \alpha_{n-2}^2\alpha_n^{r-4}$ (all monomials of degree $r-2$ in $I_0$ that are in $\mathfrak{n}^2$) $\phi$ is given by the transpose of the matrix

\[
\kbordermatrix{ & {\alpha_{n-2}\alpha_n^{r-3}} & {\alpha_{n-1}^{r-2}} & {\alpha_{n-1}^{r-3}\alpha_n} & {\ldots} & {\alpha_n^{r-2}}\\
{\alpha_0^{r-2}} & {\ast} & 0 & 0 & 0 & 0\\
{\ldots} & {\ast} & 0 & 0 & 0 & 0\\
{\alpha_{n-2}^2\alpha_n^{r-4}} & {\ast} & 0 & 0 & 0 & 0
},
\]
i.e. for every monomial $N$ in $I_0\cap \mathfrak{n}^2$ of degree $r-2$ we can choose only the coefficient of $\phi(N)$ at $\alpha_{n-2}\alpha_n^{r-3}$.

Next, fix $i\in \{0,\ldots, n-3\}$ and consider the following generators of $I_0$: 
\[
M_{i,0}:=\alpha_i\alpha_{n-1}^{r-3}, M_{i,1}:=\alpha_i\alpha_{n-1}^{r-4}\alpha_n, \ldots, M_{i,r-3}:=\alpha_i\alpha_n^{r-3}.
\]
Using relations of the form $\alpha_nM_{i,s} = \alpha_{n-1}M_{i,s+1}$ for $s=0,\ldots, r-4$ we see that on generators 
\[
M_{i,0}, M_{i,1}, \ldots, M_{i,r-3},
\]
$\phi$ is given by the transpose of the matrix
\[
\kbordermatrix{& {\alpha_{n-2}\alpha_n^{r-3}} & {\alpha_{n-1}^{r-2}} & {\alpha_{n-1}^{r-3}\alpha_n} & {\alpha_{n-1}^{r-4}\alpha_n^2} & {\ldots} & {\alpha_{n-1}\alpha_n^{r-3}} & {\alpha_n^{r-2}}\\
M_{i,0} & \ast & A_i & B_i & 0 & 0& 0 & 0\\
M_{i,1} & \ast & 0 & A_i & B_i & 0& 0& 0\\
\ldots & \ast & 0  & 0 & \ldots & \ldots & \ldots  & 0 \\
M_{i, r-3} & \ast & 0 & 0 & 0& 0& A_i & B_i
}. 
\]
More precisely, there are $A_i,B_i \in \Bbbk$ such that 
\[
\phi(M_{i,j}) \equiv A_i \alpha_{n-1}^{r-2-j}\alpha_n^j + B_i \alpha_{n-1}^{r-3-j}\alpha_n^{j+1} (\operatorname{mod} \alpha_{n-2}\alpha_n^{r-3}).
\]
for all $j=0,\ldots, r-3$.

Similarly, considering the generators 
\[
M_{n-2,j}:=\alpha_{n-2}\alpha_{n-1}^{r-3-j}\alpha_n^j\text{ for }j=0,\ldots, r-4 \text{ and } \alpha_{n-2}\alpha_n^{r-2}
\]
we see that there are $A_{n-2}, B_{n-2} \in \Bbbk$ such that
\[
\phi(M_{n-2,j}) \equiv A_{n-2} \alpha_{n-1}^{r-2-j}\alpha_n^j + B_{n-2} \alpha_{n-1}^{r-3-j}\alpha_n^{j+1} (\operatorname{mod} \alpha_{n-2}\alpha_n^{r-3})
\]
for $j=0, \ldots, r-4$ and $\phi(\alpha_{n-2}\alpha_n^{r-2}) \equiv A_{n-2}\alpha_{n-1}\alpha_n^{r-2} + B_{n-2}\alpha_n^{r-1} (\operatorname{mod} \alpha_{n-1}^{r-1})$.

Together, these estimates show that $\dim_\Bbbk \Hom_S(I_0, S/I_0)_0 \leq \Big(\binom{n+r-2}{n} - r\Big) + 1 + r + 2(n-1)$ where:
\begin{itemize}\itemsep0em
\item[(1)] $\binom{n+r-2}{n} - r$ corresponds to the choices of coefficients at $\alpha_{n-2}\alpha_n^{r-3}$ for every monomial of degree $r-2$ in $I_0$;
\item[(2)] $1+r$ corresponds to the choice of coefficient at $\alpha_{n-1}^{r-1}$ for $\phi(\alpha_{n-2}\alpha_n^{r-2})$ and the choice of $\phi(\alpha_{n-1}^r)$;
\item[(3)] $2(n-1)$ corresponds to the choice of $A_0,B_0, A_1, \ldots , A_{n-2}, B_{n-2}$.
\end{itemize}
\end{proof}

We end this section with the proof of Proposition \ref{p:minimal_reducible} concerning the minimal example of a reducible multigraded Hilbert scheme $\Hilb_{S[\PP^n]}^{h_{r, n}}$.

\begin{proof}[Proof of Proposition \ref{p:minimal_reducible}]
For a positive integer $r$, we have $\operatorname{Hilb}_{S[\mathbb{P}^1]}^{h_{r,1}}\cong \mathcal{H}ilb_r(\PP^1)\cong \PP^r$ where the first isomorphism follows from \cite[Lem.~4.1]{HS02} (both schemes are the multigraded Hilbert scheme corresponding to the same Hilbert function) and the second isomorphism follows from \cite[pages 111-112]{FGI05}.

Now we consider the case of $\PP^2$. For $r\leq 3$, the scheme $\Hilb_{S[\PP^2]}^{h_{r, 2}}$ is irreducible by Theorem \ref{t:almost_general_implies_lip} and Lemma \ref{l:properties_of_hilbert_function_of_saturated_ideal}. Consider the point $[I] \in \Hilb_{S[\PP^2]}^{h_{4, 2}}$ where $I = (\alpha_0\alpha_1, \alpha_0\alpha_2, \alpha_0^3, \alpha_1^4)$. It follows from Theorem~\ref{t:criterion_for_points_on_line} that $[I]\notin \Slip_{4, 2}$ which implies that $\Hilb_{S[\PP^2]}^{h_{4, 2}}$ is reducible.
\end{proof}

\begin{remark}
Proposition \ref{p:minimal_reducible} shows that in \cite[Cor.~6.3]{buczyska2019apolarity} condition (iv) is not implied by conditions (i)-(iii). See the comment after \cite[Cor.~6.3]{buczyska2019apolarity}.
\end{remark}

\section{Applications to border ranks of polynomials}\label{s:applications}
In this section we assume that the base field $\Bbbk$ is the field of complex numbers $\CC$ since we will cite papers in which this is assumed. Let $n$ be a positive integer and $S=\CC[\alpha_0,\ldots, \alpha_n]$ be the polynomial ring with standard $\mathbb{Z}$-grading. We will consider the dual polynomial ring $S^* = \CC[x_0,\ldots,x_n]$ with the structure of an $S$-module on $S^*$ given by partial differentiation. We will denote this action by $\lrcorner$. Given a homogeneous polynomial $F \in S^*$ we denote by $\Ann(F)$ the ideal $\{\theta \in S | \theta \lrcorner F = 0\}$.

We recall various definitions of ranks of polynomials and state some versions of apolarity lemmas. See \cite{Tei14} for a discussion of related results and \cite{buczyska2019apolarity} for the case of border rank. 

Let $F \in S^*_d$. We define the rank of $F$ to be
\[
\rrank(F) = \min \{r\in \mathbb{Z} | F = L_1^d + \ldots + L_r^d \text{ for some } L_1, \ldots, L_r \in S^*_1\}.
\]
This can be generalized in various ways. We define the smoothable rank of $F$ to be
\begin{equation*}
\begin{split}
\srr(F) = \min \{r\in \mathbb{Z} | [F] \in \langle \nu_d(R) \rangle \text{ for  a smoothable zero-dimensional closed subscheme }\\
 R\subseteq \mathbb{P}(S^*_1) \text{ of length } r\},
\end{split}
\end{equation*}
where $\nu_d\colon \PP(S^*_1) \to \PP(S^*_d)$ is the Veronese map $[L]\mapsto [L^d]$ and $\langle \nu_d(R) \rangle$ is the projective linear span of the scheme $\nu_d(R)$. We define the cactus rank of $F$ to be
\begin{equation*}
\crr(F) = \min \{r\in \mathbb{Z} | [F] \in \langle \nu_d(R) \rangle \text{ for  a zero-dimensional closed subscheme }
 R\subseteq \mathbb{P}(S^*_1) \text{ of length } r\}.
\end{equation*}
It follows from these definitions, that $\crr(F) \leq \srr(F) \leq \rrank(F)$.
Recall that the $r$-th secant variety of the $d$-th Veronese variety is
$
\sigma_{r}(\nu_d(\PP (S^*_1))) = \overline{\{[F]\in \PP(S^*_d) | \rrank(F) \leq r\}}.
$
Finally, we define the border rank of $F$ to be
\[
\brr(F) = \min\{r\in \ZZ | [F] \in \sigma_r(\nu_d(\PP (S^*_1)))\}.
\]

We will use the following two apolarity lemmas.

\begin{proposition}[Cactus apolarity lemma, {\cite[Thm.~4.3]{Tei14}}]\label{p:cactus_apolarity} Let $F \in S^*_d$ be a non-zero polynomial and $r\in \mathbb{Z}_{>0}$. Then $\crr(F) \leq r$ if and only if there is a saturated, homogeneous ideal $I\subseteq S$ such that $I\subseteq \Ann(F)$ and $S/I$ has Hilbert polynomial $r$.
\end{proposition}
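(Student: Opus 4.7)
The plan is to translate the geometric condition $[F] \in \langle \nu_d(R)\rangle$ into an ideal-theoretic condition via the apolarity pairing, and then to pass between the resulting degree-$d$ condition and an inclusion of full ideals. First I would invoke the standard bijection between zero-dimensional closed subschemes $R \subseteq \PP(S^*_1)$ of length $r$ and saturated homogeneous ideals $I \subseteq S$ with $S/I$ having Hilbert polynomial $r$: the saturated ideal attached to $R$ is $I_R = \bigoplus_{d \geq 0} H^0(\PP(S^*_1), \mathcal{I}_R(d))$. Using that $\nu_d^* \mathcal{O}_{\PP(S^*_d)}(1) \cong \mathcal{O}_{\PP(S^*_1)}(d)$, with the induced map on global sections being the identity of $S_d$, combined with the non-degeneracy of the apolarity pairing $S_d \times S^*_d \to \Bbbk$, $(\theta, F) \mapsto \theta \lrcorner F$, I would identify the linear forms on $\PP(S^*_d)$ vanishing on $\nu_d(R)$ with $(I_R)_d$, obtaining
\[
\langle \nu_d(R)\rangle = \{[F]\in \PP(S^*_d) \mid \theta\lrcorner F = 0 \text{ for all } \theta \in (I_R)_d\}.
\]

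The backward direction is then immediate: given saturated $I \subseteq \Ann(F)$ with $S/I$ of Hilbert polynomial $r$, the subscheme $R = \Proj S/I$ has length $r$, and $I_d \subseteq \Ann(F)_d$ places $[F]$ in $\langle \nu_d(R)\rangle$. For the forward direction I would start from $R$ of length $r' \leq r$ with $[F]\in \langle \nu_d(R)\rangle$, enlarge $R$ by adjoining $r - r'$ generic reduced points (which can only enlarge the span and so preserves the containment) to reduce to $r' = r$, and take $I = I_R$; this only delivers $I_d \subseteq \Ann(F)_d$.

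The one substantive point, and the place where a small argument is needed, is to upgrade this degree-$d$ inclusion to $I \subseteq \Ann(F)$ in all degrees. In degrees $e > d$ the inclusion is automatic since $\Ann(F)_e = S_e$. In degrees $e \leq d$, given $\theta \in I_e$ and any $\phi \in S_{d-e}$, the product $\phi\theta$ lies in $I_d \subseteq \Ann(F)_d$, so $\phi \lrcorner (\theta \lrcorner F) = 0$; varying $\phi$ and using non-degeneracy of the apolarity pairing in degree $d-e$ forces $\theta \lrcorner F = 0$, and hence $\theta \in \Ann(F)$. Apart from this upgrade, the argument is essentially a dictionary translation of the geometric statement, so I do not expect any substantial obstacle.
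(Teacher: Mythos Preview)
The paper does not prove this proposition; it is simply quoted from \cite[Thm.~4.3]{Tei14} as a known result, so there is no proof in the paper to compare against. Your argument is the standard one and is correct: the identification of $\langle \nu_d(R)\rangle$ via the apolarity pairing, the enlargement of $R$ by generic reduced points to reach length exactly $r$, and the upgrade from $I_d\subseteq\Ann(F)_d$ to $I\subseteq\Ann(F)$ using non-degeneracy of the pairing in each degree are all sound.
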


The version for border rank is a more recent result.

\begin{proposition}[Border apolarity lemma, {\cite[Thm.~3.15]{buczyska2019apolarity}}]\label{p:border_apolarity} Let $F \in S^*_d$ be a non-zero polynomial and $r\in \mathbb{Z}_{>0}$. Then $\brr(F) \leq r$ if and only if there is a homogeneous ideal $I\subseteq S$ such that $I\subseteq \Ann(F)$ and $[I]\in \Slip_{r, n}$.
\end{proposition}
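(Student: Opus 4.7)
The plan is to prove both implications by combining the classical apolarity lemma (the rank version, \cite[Thm.~5.3]{IK06}) with the incidence correspondence between closed points of $\Hilb_{S}^{h_{r,n}}$ and polynomials they annihilate, exploiting properness of the multigraded Hilbert scheme and constancy of the Hilbert function along flat families.

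First I would introduce the closed incidence subvariety
\[
W = \{([J],[G]) \in \Hilb_{S}^{h_{r,n}} \times \PP(S^*_d) : J\subseteq \Ann(G)\},
\]
which is closed because the condition $J_d \subseteq \Ann(G)$ is the vanishing of the apolarity pairing $J_d \otimes \langle G \rangle \to \Bbbk$. By the classical apolarity lemma, a polynomial $G \in S^*_d$ has Waring rank at most $r$ precisely when some $[J] \in \Sip_{h_{r,n},n}$ satisfies $([J],[G]) \in W$. Consequently the second-factor projection of $W \cap (\Sip_{h_{r,n},n} \times \PP(S^*_d))$ is exactly the rank $\leq r$ locus in $\PP(S^*_d)$, and taking closures, the second-factor image of $\overline{W \cap (\Sip_{h_{r,n},n} \times \PP(S^*_d))}$ is $\sigma_r(\nu_d(\PP(S^*_1)))$.

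For the forward direction, if $\brr(F) \leq r$ then $[F]$ lies in this image, so there exists $([I_0],[F])$ in $\overline{W \cap (\Sip_{h_{r,n},n} \times \PP(S^*_d))}$; the first-factor projection then gives $[I_0] \in \Slip_{r,n}$ by definition, while $I_0 \subseteq \Ann(F)$ is preserved since $W$ is closed. For the converse, given $[I] \in \Slip_{r,n}$ with $I \subseteq \Ann(F)$, I would choose a DVR $R$ and a morphism $\Spec R \to \Hilb_{S}^{h_{r,n}}$ whose closed point maps to $[I]$ and whose generic point lies in $\Sip_{h_{r,n},n}$. Because the Hilbert function along this flat family is constantly $h_{r,n}$, each graded piece $(S/\tilde J)_d$ of the universal quotient is a free $R$-module of rank $h_{r,n}(d)$; the apolarity pairing identifies its $R$-linear dual with $(\tilde J_d)^\perp \subseteq R \otimes S^*_d$, which is therefore free of the same rank. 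Lifting $F$ from the special fiber to a global section $F_t$ of this free module produces a family of polynomials with $\tilde J \subseteq \Ann(F_t)$ fiberwise; for generic $t$ the ideal $J_t$ is radical, so classical apolarity gives $\rrank(F_t) \leq r$, and $[F] = \lim [F_t]$ shows $\brr(F) \leq r$.

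The main obstacle will be making the vector bundle picture rigorous: one needs the universal ideal sheaf $\mathscr{J}$ to produce, in each degree, a locally free sheaf of rank $\dim_\Bbbk S_d - h_{r,n}(d)$ on $\Hilb_{S}^{h_{r,n}}$ (which follows from flatness built into the definition of the multigraded Hilbert scheme, and was already exploited in the proof of Theorem \ref{t:hilbert_function_of_square_criterion}) and then identify its annihilator in $S^*_d$ under the apolarity pairing as a dual locally free sheaf. A minor but necessary point is that $F_t \neq 0$ for generic $t$, which is automatic from $F_0 = F \neq 0$ together with openness of nonvanishing, so the projective limit $[F] = \lim [F_t] \in \sigma_r$ is well defined.
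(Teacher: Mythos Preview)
The paper does not give its own proof of this statement: Proposition~\ref{p:border_apolarity} is simply quoted from \cite[Thm.~3.15]{buczyska2019apolarity} and used as a black box in Section~\ref{s:applications}. There is therefore nothing in the present paper to compare your argument against.

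That said, your outline follows the standard incidence-variety strategy and is essentially the approach of \cite{buczyska2019apolarity}. One small imprecision: you write that the second-factor projection of $W\cap(\Sip_{h_{r,n},n}\times\PP(S^*_d))$ is \emph{exactly} the rank~$\leq r$ locus. In fact it is only a dense subset thereof, since a polynomial of rank $\leq r$ may be apolar to an $r$-tuple of points that fails to be in general position (and hence lies outside $\Sip_{h_{r,n},n}$). This does not affect the argument, because you immediately pass to closures and the multigraded Hilbert scheme is projective, so the image of the closure equals $\sigma_r(\nu_d(\PP(S^*_1)))$ as needed.
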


We will use the following observation.

\begin{corollary}\label{c:bound_on_cactus_rank}
Let $F\in S^*_d$ and assume that there exists a homogeneous ideal $I\subseteq \Ann(F) \subseteq S$ such that  $S/I$ has Hilbert polynomial $r\in \ZZ_{>0}$ and $I_d = \overline{I}_d$. Then the cactus rank $\crr(F)$ of $F$ is at most $r$.
\end{corollary}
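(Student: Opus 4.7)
The plan is to pass to the saturation $\overline{I}$ and then invoke the cactus rank directly via its definition as membership in a Veronese span, observing that the condition that matters for the span is only the degree-$d$ part of the ideal.

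First I would note that the Hilbert polynomial of a finitely generated graded $S$-module is unchanged when one replaces an ideal by its saturation with respect to $\mathfrak{m}$, since $\overline{I}$ and $I$ coincide in all sufficiently large degrees. Hence $S/\overline{I}$ also has Hilbert polynomial $r$, and the closed subscheme $Z := V(\overline{I}) \subseteq \PP^n = \PP(S^*_1)$ is zero-dimensional of length $r$, with saturated defining ideal $I(Z) = \overline{I}$.

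Next I would use the standard characterization of the linear span of a Veronese image — the same fact that underlies the proof of the Cactus Apolarity Lemma (Proposition \ref{p:cactus_apolarity}) as recorded in \cite{Tei14}: a point $[F] \in \PP(S^*_d)$ lies in $\langle \nu_d(Z) \rangle$ if and only if $I(Z)_d \subseteq \Ann(F)_d$, because under the apolarity pairing the linear forms on $\PP(S^*_d)$ vanishing on $\nu_d(Z)$ are identified with $I(Z)_d = H^0(\PP^n, \mathcal{I}_Z(d))$. Applying this to our $Z$, the hypothesis $I_d = \overline{I}_d$ together with $I \subseteq \Ann(F)$ yields
\[
I(Z)_d \;=\; \overline{I}_d \;=\; I_d \;\subseteq\; \Ann(F)_d,
\]
so $[F] \in \langle \nu_d(Z) \rangle$. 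Since $Z$ has length $r$, the definition of cactus rank given in Section \ref{s:applications} yields $\crr(F) \leq r$, as desired.

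The only subtlety, which I would note briefly, is that $\overline{I}$ itself need not be contained in $\Ann(F)$: the containment $\overline{I}_k \subseteq \Ann(F)_k$ is automatic for $k > d$ and guaranteed by hypothesis for $k = d$, but can fail for $k < d$ (the elements of $\overline{I}_k$ obtained by saturating produce annihilation only after multiplication by sufficiently high powers of the variables). Consequently, one cannot apply Proposition \ref{p:cactus_apolarity} to the saturated ideal $\overline{I}$ directly; the point of the proof is that the weaker hypothesis $I_d = \overline{I}_d$ is exactly enough for the Veronese-span reformulation, and no genuine obstacle arises beyond this bookkeeping observation.
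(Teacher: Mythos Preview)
Your main argument is correct and essentially equivalent to the paper's: the paper invokes Proposition~\ref{p:cactus_apolarity} together with \cite[Prop.~3.4]{BB14}, whereas you unpack the Veronese-span characterization that underlies that lemma. Either way the key input is $\overline{I}_d = I_d \subseteq \Ann(F)_d$.

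However, your final ``subtlety'' paragraph is mistaken, and this is exactly where the two approaches diverge. You assert that $\overline{I}_k \subseteq \Ann(F)_k$ can fail for $k<d$, but in fact it cannot: for $\theta \in \overline{I}_k$ with $k<d$ and any $\sigma \in S_{d-k}$ one has $\sigma\theta \in \overline{I}_d = I_d \subseteq \Ann(F)_d$, hence $\sigma \lrcorner (\theta \lrcorner F)=0$; since the apolarity pairing $S_{d-k}\times S^*_{d-k}\to\Bbbk$ is perfect, $\theta\lrcorner F=0$. Thus $\overline{I}\subseteq \Ann(F)$ does hold, and this is precisely the content of \cite[Prop.~3.4]{BB14} that the paper cites, so that Proposition~\ref{p:cactus_apolarity} may be applied to $\overline{I}$ directly. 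Your detour through the span characterization is unnecessary (though not wrong); you should simply delete the final paragraph or replace it with the two-line observation above.
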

\begin{proof}
This is implied by Proposition \ref{p:cactus_apolarity} and \cite[Prop.~3.4]{BB14}.
\end{proof}

We always have $\brr(F) \leq \srr(F)$ and we say that $F$ is wild, if the inequality is strict. 
Wild polynomials are more difficult to control using standard, existing methods. Therefore, new methods need to be developed in order to study them effectively. For example, see \cite[Prop.~11]{BGI09} and its applications, \cite[Rmk.~1.5]{BL14} and \cite{GMR20}.

We will study wild homogeneous polynomials $F$ such that $\brr(F)\leq \deg(F)+2$.
If $\brr(F) \leq \deg(F)+1$ then $F$ is not wild. This is established in \cite[Prop.~2.5]{BB14} based on a result in \cite{BGI09}. Therefore, we will assume that $\brr(F) = \deg (F) + 2$.

\subsection{Polynomials in three variables of small border rank}
In this subsection we will prove the following proposition which shows that there are no wild homogeneous polynomials $F$ in three variables of border rank at most $\deg(F)+2$.
\begin{proposition}\label{p:wild_on_plane}
Let $S=\CC[\alpha_0,\alpha_1,\alpha_2]$ be a polynomial ring with dual ring $S^*=\CC[x_0,x_1,x_2]$. Let $F\in S^*_d$ be a non-zero polynomial for some $d\in \mathbb{Z}_{>0}$. If the border rank of $F$ is at most $d+2$, then $\crr(F)=\srr(F) = \brr(F)$.
\end{proposition}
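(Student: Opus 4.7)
The plan is to prove $\srr(F) \leq \crr(F)$, which, combined with the standard chain $\crr(F) \leq \brr(F) \leq \srr(F)$ recorded in the definitions above, forces all three quantities to agree.

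Set $r := \crr(F)$. By Proposition~\ref{p:cactus_apolarity} there is a saturated homogeneous ideal $J \subseteq \Ann(F)$ with $S/J$ of Hilbert polynomial $r$, so that the associated zero-dimensional closed subscheme $R := \Proj(S/J) \subseteq \PP^2$ has length $r$. The crucial input is Fogarty's theorem on Hilbert schemes of points on smooth surfaces: $\mathcal{H}ilb_r(\PP^2)$ is irreducible (and smooth of dimension $2r$), and the reduced locus is open and dense in it, so every zero-dimensional closed subscheme of $\PP^2$ is smoothable; in particular $R$ is. The inclusion $J \subseteq \Ann(F)$ restricts to $J_d \subseteq \Ann(F)_d$, and by the apolar pairing this is equivalent to $[F] \in \langle \nu_d(R) \rangle$. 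Hence $\srr(F) \leq r = \crr(F)$ by the definition of smoothable rank.

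Note that the argument above relies only on Fogarty's theorem and does not use the hypothesis $\brr(F) \leq d+2$; in fact the conclusion holds for every nonzero $F \in S^*_d$ in three variables, so the bound appears to be stated to match the context of Section~\ref{s:applications} on small border rank. A proof more in the spirit of the preceding sections would instead extract $[I] \in \Slip_{r, 2}$ with $I \subseteq \Ann(F)$ from Proposition~\ref{p:border_apolarity} and try to establish the sharper $(\overline{I})_d \subseteq \Ann(F)_d$, splitting on whether $\Proj(S/\overline{I})$ is contained in a line. The non-collinear case yields $I_d = (\overline{I})_d$ by a direct Hilbert-function argument, but in the collinear case $I_d$ is strictly smaller than $(\overline{I})_d$; this is the main obstacle in that alternative route, and it is precisely where the bound $r \leq d+2$ would enter via Theorem~\ref{t:criterion_for_points_on_line}, whose conclusion $(\overline{I})^2 \cdot \mathfrak{m}^{r-4} \subseteq I$ in degree $d$ extracts enough structural information about $F$ to recover the apolarity relation.
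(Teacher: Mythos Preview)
Your first paragraph contains a genuine gap: the ``standard chain'' you invoke, $\crr(F)\leq\brr(F)\leq\srr(F)$, is \emph{not} recorded in the paper, and indeed the inequality $\crr(F)\leq\brr(F)$ is exactly what the proposition sets out to establish under the hypothesis $\brr(F)\leq d+2$. The paper records only $\crr(F)\leq\srr(F)\leq\rrank(F)$ and $\brr(F)\leq\srr(F)$. Your Fogarty argument correctly yields $\srr(F)=\crr(F)$ on $\PP^2$, but combining this with $\brr(F)\leq\srr(F)$ gives only $\brr(F)\leq\crr(F)=\srr(F)$, the reverse of what you need. That the missing inequality is nontrivial is underscored by the open Problem stated immediately after the proposition: whether there exists any $F\in\CC[x_0,x_1,x_2]$ with $\brr(F)\neq\srr(F)$. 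If your argument worked, that problem would be resolved.

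Your second paragraph correctly sketches the actual route taken in the paper, but stops short of executing it. The paper assumes $\brr(F)=d+2$ (the case $\brr(F)\leq d+1$ being handled by \cite[Prop.~2.5]{BB14}), takes $[I]\in\Slip_{d+2,2}$ with $I\subseteq\Ann(F)$ from Proposition~\ref{p:border_apolarity}, and splits on the Hilbert function of $S/\overline{I}$. In the non-collinear case $I_d=\overline{I}_d$ and Corollary~\ref{c:bound_on_cactus_rank} gives $\crr(F)\leq d+2$. In the collinear case Theorem~\ref{t:criterion_for_points_on_line} forces $\alpha_0^2\in\Ann(F)$ (after a change of variables), so $F=x_0H_{d-1}+H_d$ with $H_{d-1},H_d\in\CC[x_1,x_2]$; a dimension count in the apolar algebra of $H_{d-1}$ then produces a degree-$\lceil(d+1)/2\rceil$ form $\eta\in\Ann(F)$ not divisible by $\alpha_0$, so that $(\alpha_0^2,\eta)$ is a saturated ideal witnessing $\crr(F)\leq d+2$. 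This last construction is the substantive step you did not attempt.
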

\begin{proof}
By \cite[Prop.~2.5]{BB14} we may assume that $\brr(F) = d+2$. From Proposition \ref{p:border_apolarity} we obtain that there is an ideal $I\subseteq \Ann(F)$ such that $[I]\in \Slip_{d+2, 2}$. If $S/\overline{I}$ has a Hilbert function different from $h_{d+2, 1}$ then    $I_d = \overline{I}_d$ and therefore, $\crr(F) \leq d+2$ by Corollary \ref{c:bound_on_cactus_rank}. The Hilbert scheme $\mathcal{H}ilb_{d+2}(\PP^2)$ is irreducible, hence $\crr(F) = \srr(F)$. Moreover, the smoothable rank is bounded from below by the border rank. As $\brr(F) = d+2$ by assumption, we get that $\crr(F)=\srr(F) = d+2$.

Therefore, we may assume that $\Ann(F)$ contains an ideal $I$ such that $[I]\in \Slip_{d+2, 2}$ and 
\[
\overline{I} = (\alpha_0, F_{d+2}(\alpha_1,\alpha_2)).
\]
It follows from Theorem \ref{t:criterion_for_points_on_line} that $(\alpha_0^2)\cdot (\alpha_0, \alpha_1,\alpha_2)^{d-2} \subseteq I \subseteq \Ann(F)$. Thus, $\alpha_0^2 \in \Ann(F)$.

In particular, $F = x_0H_{d-1} + H_d$ for some homogeneous $H_{d-1}, H_d \in T^*:=\CC[x_1,x_2]$ of degrees $d-1, d$, respectively. It is enough to show that there is a homogeneous polynomial $\eta \in S$, not divisible by $\alpha_0$,  of degree $\lceil \frac{d+1}{2} \rceil$ such that $\eta \in \Ann(F)$. Indeed, then $(\alpha_0^2, \eta) \subseteq \Ann(F)$ is a saturated ideal of $S$ with Hilbert polynomial of the quotient algebra at most $d+2$ so from Proposition \ref{p:cactus_apolarity} we get that $\crr(F) \leq d+2$. Thus, from inequalities $d+2 = \brr(F) \leq \srr(F) =  \crr(F) \leq d+2$ we conclude that $\crr(F) = \srr(F) = d+2$.

Let $e=\lceil \frac{d+1}{2} \rceil$ and  $T=\CC[\alpha_1, \alpha_2]$. We will consider the restriction of the action of $S$ on $S^*$ to an action of $T$ on $T^*$. We shall show that there exist homogeneous polynomials $\xi_{e-1}, \xi_{e} \in T$ such that $\alpha_0\xi_{e-1} + \xi_{e} \in \Ann(F)$ and $\xi_{e} \neq 0$.
We have $(\alpha_0\xi_{e-1} + \xi_{e})\lrcorner F = x_0(\xi_{e}\lrcorner H_{d-1}) + (\xi_{e-1} \lrcorner H_{d-1} + \xi_{e} \lrcorner H_d)$. Therefore, we need to choose $\xi_{e} \in \Ann(H_{d-1}) \subseteq T$. If there exists a non-zero $\xi_{e} \in \Ann(H_{d-1}) \cap \Ann(H_{d})$ we can set $\xi_{e-1}=0$ and we are done.

Otherwise, let $h$ be the Hilbert function of $T/\Ann(H_{d-1})$. Then the $\CC$-vector space 
\[
\langle \xi_{e} \lrcorner H_d | \xi_{e}\in \Ann(H_{d-1}) \subseteq T \rangle
\]
has dimension $e+1-h(e)$. On the other hand the vector space $\langle \xi_{e-1} \lrcorner H_{d-1} | \xi_{e-1} \in T_{e-1} \rangle$ is of dimension $h(e-1)$. It is enough to show that these two vector subspaces of $T^*_{d-e}$ have a non-zero intersection. We claim that
\begin{equation}\label{eq:p^2-case}
e+1-h(e) + h(e-1) \geq d-e+2.
\end{equation}

By the definition of $e$ we have $d+1 \leq 2e$. We claim that $h(e-1)-h(e) \geq 0$. If $\Ann(H_{d-1})_{e-1} \neq 0$ then $h(e-1) - h(e) \geq 0$. On the other hand, if $\operatorname{Ann}(H_{d-1})_{e-1} = 0$ and $h(e) > h(e-1)$, then $\Ann(H_{d-1})_e = 0$. Thus, $h(d-1-e) = h(e) = e+1$ since $T/\Ann(H_{d-1})$ is Gorenstein. In particular, $d-1-e \geq e$. This contradicts $d+1 \leq 2e$. These remarks imply Equation \eqref{eq:p^2-case}.
\end{proof}

\begin{remark}
In \cite[page~37]{BGI09} in the paragraph above Remark 13, there is an example that suggests that there could exist a wild polynomial in $\sigma_{8}(\nu_6(\PP^2))$. Proposition \ref{p:wild_on_plane} shows that there is no such polynomial.
\end{remark}

It would be interesting to address the following questions related to Proposition \ref{p:wild_on_plane}.

\begin{problem}
Does there exist a homogeneous polynomial $F\in \CC[x_0,x_1,x_2]$ such that $\brr(F) \neq \srr(F)$? If it does, what is the smallest possible degree of such a polynomial?
\end{problem}

\subsection{Quartics in four variables of small border rank}
In this subsection we will prove the following proposition which shows that there are no wild homogeneous quartics $F$ in four variables of border rank at most $6$.

\begin{proposition}\label{p:wild_quartics_in_four_variables}
Let $S=\CC[\alpha_0,\alpha_1,\alpha_2, \alpha_3]$ be a polynomial ring with dual ring $S^*=\CC[x_0,x_1,x_2,x_3]$. Let $F\in S^*_4$ be non-zero. If the border rank of $F$ is at most $6$, then $\crr(F)=\srr(F) = \brr(F)$.
\end{proposition}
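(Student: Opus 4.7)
The plan is to follow the strategy of Proposition \ref{p:wild_on_plane}, adapted to $\PP^3$. First I would apply \cite[Prop.~2.5]{BB14} to reduce to the case $\brr(F) = 6$, and then the border apolarity lemma (Proposition \ref{p:border_apolarity}) to produce a homogeneous ideal $I \subseteq \Ann(F)$ with $[I] \in \Slip_{6, 3}$. Since every length-$6$ zero-dimensional subscheme of $\PP^n$ is smoothable (classically, $\mathcal{H}ilb_6(\PP^3)$ is irreducible), proving $\crr(F) \leq 6$ gives $\crr(F) = \srr(F) = \brr(F) = 6$, so it suffices to bound the cactus rank.

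I would then split on the Hilbert function of $S/\overline{I}$. If $H_{S/\overline{I}}(4) = 6$, then $I_4 = \overline{I}_4$ (both have codimension $6$ in $S_4$), and Corollary \ref{c:bound_on_cactus_rank} gives $\crr(F) \leq 6$ immediately. Otherwise, the classical regularity bound for zero-dimensional subschemes of projective space (a length-$r$ scheme has regularity at most $r-1$, with equality only when the scheme lies in a line) forces $\Proj S/\overline{I}$ to be contained in a line, so after a linear change of coordinates $\overline{I} = (\alpha_0, \alpha_1, F_6(\alpha_2, \alpha_3))$ for some nonzero binary sextic $F_6$. Theorem \ref{t:criterion_for_points_on_line} with $n = 3$, $r = 6$ then yields $(\overline{I})^2 \cdot \mathfrak{m}^2 \subseteq I \subseteq \Ann(F)$, and, exactly as in the plane case, the fact that $\mathfrak{m}^2$ cannot annihilate a nonzero polynomial of degree $2$ forces $(\alpha_0, \alpha_1)^2 \subseteq \Ann(F)$. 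Writing $T = \CC[\alpha_2, \alpha_3]$ and $T^* = \CC[x_2, x_3]$, this means $F = x_0 H + x_1 K + G$ with $H, K \in T^*_3$ and $G \in T^*_4$.

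The remaining task is to construct a saturated ideal $J \subseteq \Ann(F)$ containing $(\alpha_0, \alpha_1)^2$ with $S/J$ of Hilbert polynomial at most $6$; together with Proposition \ref{p:cactus_apolarity} this will yield $\crr(F) \leq 6$. I would look for elements $\eta = \eta_0 + \alpha_0 \mu + \alpha_1 \nu \in \Ann(F)$ with $\eta_0, \mu, \nu \in T$; unwinding $\eta \lrcorner F = 0$ reduces to the conditions $\eta_0 \in \Ann_T(H) \cap \Ann_T(K)$ and $\eta_0 \lrcorner G + \mu \lrcorner H + \nu \lrcorner K = 0$. Since $S/(\alpha_0, \alpha_1)^2$ has Hilbert polynomial $3a + 1$, a single $\eta$ with $\eta_0 \neq 0$ of degree $e$ cuts this down to a $0$-dimensional scheme of length $3e$, so one needs $\deg \eta_0 \leq 2$, or else several generators combining to the same effect. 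The main obstacle is precisely this dimension count; it should parallel Equation \eqref{eq:p^2-case} and exploit the Gorenstein symmetry of the apolar algebras $T/\Ann_T(H)$, $T/\Ann_T(K)$, $T/\Ann_T(G)$, which live in only two variables and therefore have highly restricted Hilbert functions. The count naturally splits into sub-cases according to the ranks of the binary forms $H, K, G$ and the common degeneracies of the pair $(H, K)$; the most degenerate situations (for instance $H = K = 0$, reducing $F$ to a binary quartic where $\crr = \brr$ follows immediately from smoothness of $\mathcal{H}ilb_r(\PP^1) \cong \PP^r$) can be treated directly, so the case analysis is finite.
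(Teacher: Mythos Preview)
Your reductions match the paper exactly: reduce to $\brr(F)=6$, invoke border apolarity, observe that $\mathcal{H}ilb_6(\PP^3)$ is irreducible so it suffices to bound $\crr(F)$, split on $H_{S/\overline I}(4)$, and in the line case use Theorem~\ref{t:criterion_for_points_on_line} to get $(\alpha_0,\alpha_1)^2\subseteq\Ann(F)$ and the normal form $F=x_0H+x_1K+G$ with $H,K\in T^*_3$, $G\in T^*_4$.

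The gap is in the endgame. Your primary suggestion---a single element $\eta=\eta_0+\alpha_0\mu+\alpha_1\nu$ with $\eta_0\in T_2$ and $\eta_0\in\Ann_T(H)\cap\Ann_T(K)$---does not always exist: for $H=x_2^2x_3$, $K=x_2x_3^2$ one computes $\Ann_T(H)_2=\langle\alpha_3^2\rangle$ and $\Ann_T(K)_2=\langle\alpha_2^2\rangle$, whose intersection is zero. So ``several generators combining to the same effect'' is not an optional refinement but the heart of the matter, and the analogy with Equation~\eqref{eq:p^2-case} is misleading: there a single dimension count in one binary apolar algebra sufficed, whereas here one must simultaneously control two cubics and a quartic and also verify saturation of the resulting ideal.

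The paper organises the case analysis differently from what you sketch. Rather than splitting on the ranks of $H,K,G$, it first reduces to $\Ann(F)_1=0$ (via Proposition~\ref{p:wild_on_plane} and \cite[\S3.1]{BB15}), then uses the guaranteed existence of a \emph{cubic} $\theta\in T_3$ with $\theta\lrcorner H=\theta\lrcorner K=0$ (dimension count: $\dim T_3=4$ and each condition is a single linear constraint), puts $\theta$ into one of three normal forms $\alpha_2^3$, $\alpha_2^2\alpha_3$, $\alpha_2\alpha_3(\alpha_2-\alpha_3)$, and in each sub-case writes down an explicit ideal $J\supseteq(\alpha_0,\alpha_1)^2$ with generators in degrees $2$ and $3$, checking via Gr\"obner bases (and Lemma~\ref{l:initial_ideal_of_saturation}) that the initial ideal is saturated with Hilbert polynomial $6$. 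That explicit construction is the content your outline is missing.
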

\begin{proof}
By \cite[Prop.~2.5]{BB14} we may assume that $\brr(F) = 6$. Since $\mathcal{H}ilb_6(\PP^3)$ is irreducible (see \cite[Thm.~1.1]{CEVV09}), it is enough to show that $\crr(F) \leq 6$. As in the previous subsection, using Theorem \ref{t:criterion_for_points_on_line} and Corollary~\ref{c:bound_on_cactus_rank} we may restrict our attention to the case that $F=x_0C_1+x_1C_2+D$ where $C_1,C_2 \in \CC[x_2,x_3]_3$ and $D\in \CC[x_2,x_3]_4$.

There is a polynomial $\theta \in \CC[\alpha_2,\alpha_3]_3$ such that $\theta \lrcorner C_1 = \theta \lrcorner C_2 = 0$. By a linear change of variables in $\CC[\alpha_2,\alpha_3]$ we may assume that $\theta$ is one of the following:
\begin{enumerate}\itemsep0em
\item $\theta  = \alpha_2^3$;
\item $\theta = \alpha_2^2\alpha_3$;
\item $\theta = \alpha_2\alpha_3(\alpha_2-\alpha_3)$.
\end{enumerate}

We study this case by case. We will further simplify $F$ by a linear change of variables and in each case we will find a homogeneous ideal $J\subseteq \Ann(F)$ whose initial ideal with respect to the lex order with $\alpha_2 > \alpha_3 > \alpha_1 > \alpha_0$ will be saturated and the Hilbert polynomial of the corresponding quotient algebra will be $6$. Thus, $\crr(F) \leq 6$ by Lemma \ref{l:initial_ideal_of_saturation} and Proposition \ref{p:cactus_apolarity}. In each case the given set of generators of $J$ will be a Gr\"obner basis. We may assume that $\Ann(F)_1 = 0$ by Proposition \ref{p:wild_on_plane} and \cite[\S3.1]{BB15}.

We start with case 1. Up to a linear change of variables in $S^*$ we have one of the cases:
\begin{enumerate}\itemsep0em
\item[1.A] $F=x_0(x_2^{2}x_3+ax_3^3)+x_1x_2x_3^2 + Q$ with $a\in \CC$ and $Q\in \CC[x_2,x_3]_4$;
\item[1.B] $F=x_0(x_2^2x_3+ax_2x_3^2)+x_1x_3^3+Q$ with $a\in \CC$ and $Q\in \CC[x_2,x_3]_4$;
\item[1.C] $F=x_0x_2x_3^2+x_1x_3^3+Q$ with $Q\in \CC[x_2,x_3]_4$.
\end{enumerate}

Let $\alpha_2^3 \lrcorner Q = Ax_2+Bx_3$. Corresponding to the above cases, the following ideals contained in $\Ann(F)$ show that the cactus rank of $F$ is at most $6$: 
\begin{enumerate}\itemsep0em
\item[1.A] $J=(\alpha_0^2, \alpha_0\alpha_1, \alpha_1^2, \alpha_0\alpha_2-\alpha_1\alpha_3, \alpha_1\alpha_2^2, \alpha_2^3-\frac{A}{2}\alpha_0\alpha_2\alpha_3-\frac{B}{2}\alpha_1\alpha_2\alpha_3)$;
\item[1.B] $J=(\alpha_0^2, \alpha_0\alpha_1, \alpha_1^2, \alpha_1\alpha_2, \alpha_0\alpha_2^2-\frac{1}{3}\alpha_1\alpha_3^2, \alpha_2^3-\frac{A}{2}\alpha_0\alpha_2\alpha_3+\frac{aA-B}{6}\alpha_1\alpha_3^2)$;
\item[1.C] $J=(\alpha_0^2, \alpha_0\alpha_1, \alpha_1^2, \alpha_1\alpha_2, \alpha_0\alpha_2^2, \alpha_2^3-\frac{A}{2}\alpha_0\alpha_3^2-\frac{B}{6}\alpha_1\alpha_3^2)$.
\end{enumerate}

Now we consider case 2, namely we assume that $\alpha_2^2\alpha_3 \lrcorner C_1 = \alpha_2^2\alpha_3 \lrcorner C_2 = 0$. Then, up to a linear change of variables in $S^*$ and excluding possibilities already considered in case 1, we have one of the cases:
\begin{enumerate}\itemsep0em
\item[2.A] $F=x_0(x_2^3+ax_3^3)+x_1(x_2x_3^2+bx_3^3) + Q$ with $a,b\in \CC$ and $Q\in \CC[x_2,x_3]_4$;
\item[2.B] $F=x_0(x_2^3+ax_2x_3^2)+x_1x_3^3 + Q$ with $a\in \CC$ and $Q\in \CC[x_2,x_3]_4$.
\end{enumerate}

Let $\alpha_2^2\alpha_3 \lrcorner Q = Ax_2 + Bx_3$. Corresponding to the above cases, the following ideals contained in $\Ann(F)$ show that the cactus rank of $G$ is at most $6$:
\begin{enumerate}\itemsep0em
\item[2.A] If $a\neq 0$ take $J=(\alpha_0^2, \alpha_0\alpha_1, \alpha_1^2, a\alpha_1\alpha_2-\frac{1}{3}\alpha_0\alpha_3, \alpha_0\alpha_2\alpha_3, \alpha_2^2\alpha_3-\frac{A}{6}\alpha_0\alpha_2^2 - \frac{B}{2}\alpha_1\alpha_2\alpha_3)$.

If $a=0$ take $J=(\alpha_0^2, \alpha_0\alpha_1, \alpha_1^2, \alpha_0\alpha_3, \alpha_1\alpha_2^2, \alpha_2^2\alpha_3-\frac{A}{6}\alpha_0\alpha_2^2 - \frac{B}{2}\alpha_1\alpha_2\alpha_3)$;
\item[2.B] Take $J=(\alpha_0^2, \alpha_0\alpha_1, \alpha_1^2, \alpha_1\alpha_2, \alpha_0\alpha_2\alpha_3 - \frac{a}{3}\alpha_1\alpha_3^2, \alpha_2^2\alpha_3-\frac{A}{6}\alpha_0\alpha_2^2 - \frac{B}{6}\alpha_1\alpha_3^2)$.
\end{enumerate}
Finally we consider case 3, that is we assume that $\alpha_2\alpha_3(\alpha_2-\alpha_3) \lrcorner C_1 = \alpha_2\alpha_3(\alpha_2-\alpha_3) \lrcorner C_2 = 0$. Then, up to a linear change of variables in $S^*$ and excluding possibilities considered in case 1, we have one of the cases:
\begin{enumerate}\itemsep0em
\item[3.A] $F=x_0(x_2^3+ax_3^3)+x_1(x_2^2x_3+x_2x_3^2 + bx_3^3) + Q$ with $a,b\in \CC$ and $Q\in \CC[x_2,x_3]_4$;
\item[3.B] $F=x_0(x_2^3 + ax_2^2x_3+ax_2x_3^2)+x_1x_3^3 + Q$ with $a\in \CC$ and $Q\in \CC[x_2,x_3]_4$.
\end{enumerate}

Let $(\alpha_2^2\alpha_3 - \alpha_2\alpha_3^2)\lrcorner Q = Ax_2 + Bx_3$. Corresponding to the above cases, the following ideals show that the cactus rank of $G$ is at most $6$:
\begin{enumerate}\itemsep0em
\item[3.A] If $a\neq 0$ take $J=(\alpha_0^2, \alpha_0\alpha_1, \alpha_1^2, a\alpha_1\alpha_2 + \frac{a}{3}\alpha_0\alpha_2 - a\alpha_1\alpha_3 + (b-\frac{1}{3})\alpha_0\alpha_3, \alpha_0\alpha_2\alpha_3, \alpha_2^2\alpha_3-\alpha_2\alpha_3^2-\frac{A}{6}\alpha_0\alpha_2^2 -\frac{B}{2}\alpha_1\alpha_2^2)$.

If $a=0$ take $J=(\alpha_0^2, \alpha_0\alpha_1, \alpha_1^2, \alpha_0\alpha_3, \alpha_1\alpha_2^2 + \frac{1}{3}\alpha_0\alpha_2^2 - \alpha_1\alpha_2\alpha_3, \alpha_2^2\alpha_3 - \alpha_2\alpha_3^2 - \frac{A}{6}\alpha_0\alpha_2^2 - \frac{B}{2}\alpha_1\alpha_2^2)$;
\item[3.B]
If $a=0$ then we are in case 2.B. Therefore, assume that $a\neq 0$. Take 
\[
J=(\alpha_0^2, \alpha_0\alpha_1, \alpha_1^2, \alpha_1\alpha_2, \alpha_0\alpha_2\alpha_3 -\alpha_0\alpha_3^2 - \frac{a}{3}\alpha_1\alpha_3^2, \alpha_2^2\alpha_3-\alpha_2\alpha_3^2-\frac{A}{6}\alpha_0\alpha_2^2 +\frac{aA-3B}{18}\alpha_1\alpha_3^2).
\]
\end{enumerate}
\end{proof}

\subsection{Wild quintic in four variables of border rank 7}
In Proposition \ref{p:wild_quartics_in_four_variables} we showed that there are no wild quartics in four variables of border rank $6$. In this subsection, we give an example of a wild quintic in four variables of border rank $7$.

\begin{proposition}
Let $S=\CC[\alpha_0, \ldots, \alpha_3]$ be a polynomial ring with graded dual ring $S^*=\CC[x_0,\ldots,x_3]$.
Let $F=x_0x_2^4+x_0x_2^3x_3+x_1x_2^2x_3^2+x_1x_3^4$. Then $\brr(F) = 7$ and $\crr(F) > 7$. Thus, $F$ is wild.
\end{proposition}
\begin{proof}
By \cite[Prop.~2.5]{BB14} it suffices to show that $\brr(F) \leq 7 < \crr(F)$. 

We have $(\alpha_0, \alpha_1)^2 \subseteq \Ann(F)$. Let $J=(\Ann(F)_{\leq 3}) + (\alpha_2^7)$. Then $[J]\in \Hilb_S^{h_{7,3}}$ and it follows from Theorem \ref{t:criterion_for_points_on_line} that there is an ideal $[J']\in \Slip_{7,3}$ such that $(J')_{\geq 5} = J_{\geq 5}$. In particular, $J'_5 = J_5 \subseteq \Ann(F)_5$ so $J'\subseteq \Ann(F)$. It follows from Proposition \ref{p:border_apolarity} that $\brr(F) \leq 7$.

Now we show that $\crr(F) > 7$. Otherwise, by Proposition \ref{p:cactus_apolarity} there exists a homogeneous, saturated ideal $K \subseteq \Ann(F)$ such that $S/K$ has Hilbert polynomial $7$. Since $H_{S/\Ann(F)}(a) = h_{7,3}(a)$ for $a\leq 3$, we have $K_{\leq 3} = \Ann(F)_{\leq 3}$. In particular, $(\alpha_0, \alpha_1) = \overline{(\Ann(F)_{\leq 3})} \subseteq K$. This contradicts the assumption that $K_1=\Ann(F)_1 = 0$.
\end{proof}

\subsection{Cubics in five variables of minimal border rank}
In this subsection we let $S=\CC[\alpha_0, \ldots, \alpha_4]$ be a polynomial ring and $S^* = \CC[x_0,\ldots, x_4]$ be the dual ring. Let $C\in S^*$ be a homogeneous polynomial of degree $3$. We say that $C$ is concise if $\Ann(C)_1 = 0$. It is known that  there exists a wild concise cubic in $S^*$ of border rank $5$ (see \cite[Thm.~1.3]{BB15}) and that a concise cubic in $S^*$ of border rank $5$ is wild if and only if its Hessian is zero (see \cite[Thm.~4.9]{HMV20}). Using Theorem~\ref{t:criterion_for_points_on_line} we obtain in a simple way that up to a linear change of variables, the cubic given in \cite[Thm.~1.3]{BB15}  is the unique wild, concise cubic in $S^*$ of border rank $5$:
\begin{equation}\label{eq:concise_cubic}
x_0x_3^2-x_1(x_3+x_4)^2+x_2x_4^2.
\end{equation}

By Proposition \ref{p:border_apolarity} there is an ideal $I\subseteq \Ann(C)$ such that $[I]\in \Slip_{5, 4}$. If the Hilbert function of $S/\overline{I}$ is not $h_{5, 1}$, then $I_3 = \overline{I}_3$, so $\crr(C) \leq 5$ by Corollary \ref{c:bound_on_cactus_rank}. Consequently, $C$ is not wild since $\crr(C) = \srr(C)$ (see \cite[Thm.~1.1]{CEVV09}). Therefore, we may assume that $\overline{I} = (\alpha_0,\alpha_1,\alpha_2, F(\alpha_3,\alpha_4))$ for some $F \in \CC[\alpha_3,\alpha_4]_5$. Since $[I]\in \Slip_{5, 4}$ it follows from Theorem \ref{t:criterion_for_points_on_line}  that $(\alpha_0,\alpha_1,\alpha_2)^2\cdot (\alpha_0,\alpha_1,\ldots, \alpha_4) \subseteq \Ann(C)$ and thus, $(\alpha_0,\alpha_1,\alpha_2)^2 \subseteq \Ann(C)$. Hence
\[
C=x_0Q_0+x_1Q_1+x_2Q_2 + C' \text { where } Q_0,Q_1,Q_2\in \CC[x_3,x_4]_2 \text{ and } C'\in\CC[x_3,x_4]_3.
\]
Moreover, $Q_0,Q_1,Q_2\in \CC[x_3, x_4]_2$ are linearly independent since $C$ is concise. Therefore, after a linear change of variables we may reduce $C$ to the form given in Equation \eqref{eq:concise_cubic}.

\begin{remark}
The annihilator ideal $\Ann(C)$ of a concise cubic $C$ has a minimal generator of degree $3$ (see \cite[Thm.~5.4]{buczyska2019apolarity} for a vast generalization). Therefore, the form given in Equation \eqref{eq:concise_cubic} should be compared to the form given in \cite[Thm.~4.5]{BBKT15}.
\end{remark} 

\section{A sufficient condition in the case of projective plane}\label{s:sufficient}
In this section we are concerned with $\PP^2$, so in order to simplify notation, we will write $S$ instead of $S[\PP^2]$. Let $T=\Bbbk[\alpha_1,\alpha_2] \subseteq \Bbbk[\alpha_0, \alpha_1, \alpha_2] = S$. Fix a positive integer $r$ and recall from Section \ref{s:introduction} that the irrelevant ideal $(\alpha_0,\alpha_1,\alpha_2)\subseteq S$ is denoted by $\mathfrak{m}$.

The main result is Theorem \ref{t:almost_general_implies_lip} which we establish in Subsection \ref{ss:proof_of_almost_general}. Proof of the theorem is based on Proposition \ref{p:tangent_space_of_extended_ideal} which is presented in Subsection \ref{ss:computing_dimension}.

\subsection{Computing dimension of tangent space}\label{ss:computing_dimension}
 Recall that if $M,N$ are $\ZZ$-graded $T$-modules  (respectively, $S$-modules) and $M$ is finitely generated then the Ext groups $\Ext^i_T(M,N)$ (respectively, $\Ext^i_S(M,N)$) are $\ZZ$-graded $T$-modules (respectively, $S$-modules) in a natural way (see \cite[Sec.~1.5]{BH98}). For a $\mathbb{Z}$-graded $T$-module $M$ and an integer $d$, $M(d)$ is the $\mathbb{Z}$-graded $T$-module given by $M(d)_e = M_{e+d}$ for all $e\in \mathbb{Z}$.

Consider a monomial ideal $I$ of $T$ such that $\dim_\Bbbk T/I = r$ for some $r\in \mathbb{Z}_{>0}$. The goal of this subsection is to compute $\dim_\Bbbk \Hom_S(I^{ex}, S/I^{ex})_0$ where $I^{ex}=S\cdot I\subseteq S$ is the extended ideal. 

\begin{lemma}\label{lem:homs_extends}
In the above notation we have
\[
\dim_\Bbbk \Hom_S(I^{ex}, S/I^{ex})_0 = \sum_{d\leq 0} \dim_\Bbbk \Hom_T(I, T/I)_d.
\]
\end{lemma}
\begin{proof}
There is a natural $\Bbbk$-linear map $\tau\colon S\to T$ given by $\tau(f)=f|_{\alpha_0=1}$. For an integer $a$ it induces a map $\xi_a\colon I^{ex}_a\to I_{\leq a}$. Furthermore, $\tau$ induces a map $\chi\colon S/I^{ex}\to T/I$ and for an integer $a$ we get an isomorphism of $\Bbbk$-vector spaces $\chi_a\colon (S/I^{ex})_a\to (T/I)_{\leq a}$.
Given $\varphi\in \Hom_S(I^{ex}, S/I^{ex})_0$ we define $\widetilde{\varphi} \in \Hom_T(I, T/I)_{\leq 0}$ by
\[
I\hookrightarrow I^{ex}\xrightarrow{\varphi} S/I^{ex} \xrightarrow{\chi} T/I.
\]
For $\psi\in \Hom_T(I, T/I)_{\leq 0}$ we define $\widehat{\psi}\in \Hom_S(I^{ex}, S/I^{ex})_0$ by $\widehat{\psi}(f_a) = \widehat{\psi}_a(f_a)$ where $a\in \ZZ$, $f_a\in I^{ex}_a$ and $\widehat{\psi}_a\in \Hom_\Bbbk(I^{ex}_a, (S/I^{ex})_a)$ is given by
\[
I^{ex}_a \xrightarrow{\xi_a} I_{\leq a} \xrightarrow{\psi} (T/I)_{\leq a} \xrightarrow{\chi_a^{-1}} (S/I^{ex})_a.
\]
The associations $\varphi\mapsto \widetilde{\varphi}$ and $\psi\mapsto \widehat{\psi}$ are inverse to each other. 
\end{proof}

We will use Lemma \ref{lem:homs_extends} in the case that $H_{S/I^{ex}} = h_{r, 2}$. Then $\dim_\Bbbk \Hom_S(I^{ex}, S/I^{ex})_0$ is the dimension of tangent space to $\Hilb_S^{h_{r,2}}$ at the point $[I^{ex}]$ (see \cite[Prop.~1.6]{HS02}).
Moreover, $\dim_\Bbbk \Hom_T(I,T/I) = 2r$ since this is the tangent space to the Hilbert scheme of 0-dimensional length $r$ closed subschemes of $\mathbb{A}^2$ and that scheme is smooth of dimension $2r$ (see \cite{Fog68}). Therefore,
\begin{equation*}
\dim_\Bbbk \Hom_S(I^{ex}, S/I^{ex})_0 = 2r - \sum_{d>0} \dim_\Bbbk \Hom_T(I, T/I)_d
\end{equation*}
and we may focus on calculating the latter sum.

Given a monomial ideal $I$ as above we have the associated staircase diagram (see \cite{miller2004combinatorial}) obtained as follows. 
For each pair of non-negative integers $(s,t)$ such that $\alpha_1^s\alpha_2^t \notin I$ we put a $1\times 1$ box with sides parallel to coordinate axis and $(s,t)$ as lower left corner of the box.
We will denote the diagram corresponding to $I$ by $\mathcal{D}_I$. The set of boxes of $\mathcal{D}_I$ (identified with the set of monomials outside $I$) will be denoted by $\Lambda_I$. We will use the canonical minimal free resolution of $T/I$ as given in \cite[Prop.~3.1]{miller2004combinatorial}. The set of minimal monomial generators of $I$ will be denoted by $M_I$ and the generating set of relations (or more precisely the set of their degrees when $T$ is considered with natural $\mathbb{Z}^2$-grading) used in that resolution will be denoted by $R_I$. Note that $\#\Lambda_I = \dim_\Bbbk T/I = r$  and  $\# R_I = \# M_I-1$. We will identify monomials of $T$ with lattice points in $\mathbb{Z}^2$. Given a point $\mathbf{u}=(s,t)$ in $\mathbb{Z}^2$ we will write $|\mathbf{u}|$ for $s+t$. 
We define three functions from integers to integers:
\begin{multicols}{2}
\begin{align*}
\lambda_I(a) =  &  \#\{\mathbf{u} \in \Lambda_I \mid |\mathbf{u}| = a\}, \\
\mu_I(a) = & \beta_{1, a} (T/I)   = \#\{\mathbf{u} \in M_I \mid |\mathbf{u}| = a\},\\
\rho_I(a) = & \beta_{2, a} (T/I) = \#\{\mathbf{u} \in R_I \mid |\mathbf{u}| = a\}.
\end{align*}
\columnbreak

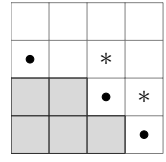
\begin{figure}[H]
\centering
  \begin{tikzpicture}[scale=0.50]
    \coordinate (Origin)   at (0,0);
    \coordinate (XAxisMin) at (-1,0);
    \coordinate (XAxisMax) at (5,0);
    \coordinate (YAxisMin) at (0,-1);
    \coordinate (YAxisMax) at (0,5);
  
    \clip (0,0) rectangle (4.1 cm,4cm);
	
	\draw[step=1cm,gray,very thin] (0,0) grid (4,4);
	   
	\draw [, black] (0,0) -- (3,0);
    \draw [, black] (0,0) -- (0,2);
    \draw [, black] (0,1) -- (3,1);
    \draw [, black] (0,2) -- (2,2);
    \draw [, black] (1,0) -- (1,2);
    \draw [, black] (2,0) -- (2,2);
    \draw [, black] (3,0) -- (3,1);
    
	\draw[color=black, fill=black]   (3.5,0.5) circle (.1);
	\draw[color=black, fill=black]   (2.5,1.5) circle (.1);
	\draw[color=black, fill=black]   (0.5,2.5) circle (.1);
	
	\node at (3.5, 1.5) {$\ast$};
	\node at (2.5, 2.5) {$\ast$};
	
	\filldraw[fill=gray, fill opacity=0.3, draw=gray, draw opacity = 0.3] (0,0)--(3,0)--(3,1)--(2,1)--(2,2)--(0,2);
  \end{tikzpicture}
\caption{Staircase diagram of the ideal $I=(\alpha_1^3, \alpha_1^2\alpha_2, \alpha_2^2).$}\label{f:example_of_a_diagram}
\end{figure}
\end{multicols}

\begin{example}
Figure \ref{f:example_of_a_diagram} presents the staircase diagram of $I=(\alpha_1^3, \alpha_1^2\alpha_2, \alpha_2^2)$. Filled boxes correspond to monomials outside $I$ (i.e. elements of $\Lambda_I$), dots correspond to elements of $M_I$ (i.e. minimal monomial generators of $I$) and asterisks correspond to elements of $R_I$ (i.e. minimal relations between those generators).
\end{example}

The goal of this subsection is the proof of the following proposition, which will play a key role in the proof of Theorem \ref{t:almost_general_implies_lip}.

\begin{proposition}\label{p:tangent_space_of_extended_ideal}
Given a monomial ideal $I$ in $T$ with $\dim_\Bbbk T/I = r$ for some $r\in \mathbb{Z}_{>0}$ we have
\begin{equation}\label{eqn:formula_for_positive_tangent_space}
\dim_\Bbbk \Hom_T(I, T/I)_{>0} = \sum_{\mathbf{u} \in M_{I}} \sum_{a > |\mathbf{u}|} \lambda_I(a) - \sum_{\mathbf{u} \in R_{I}} \sum_{a > |\mathbf{u}|} \lambda_I(a).
\end{equation}
\end{proposition}

The proof of Proposition \ref{p:tangent_space_of_extended_ideal} is based on the following observation.

\begin{lemma}\label{l:vanishing_of_exts}
Let $I$ be a monomial ideal in $T$ such that $T/I$ is a finite $\Bbbk$-vector space. Then:
\begin{enumerate}[label=(\roman*)]
\item[(i)] The natural map $T \to \Hom_T(I,T)$ given by $f\mapsto (g\mapsto fg)$ is an isomorphism of graded $T$-modules. 
\item[(ii)] $\Ext^1_T(I,T/I)_{>0}=0$.
\end{enumerate}
\end{lemma}
\begin{proof}
Since $\dim_\Bbbk T/I$ is finite, $I=(\alpha_1^{a_0}, \alpha_1^{a_1}\alpha_2^{b_1}, \ldots, \alpha_1^{a_{s-1}}\alpha_2^{b_{s-1}}, \alpha_2^{b_s})$ for some positive integers $a_0>a_1>\ldots > a_{s-1}$ and $b_1 < b_2 < \ldots < b_s$. Set $a_s=b_0=0$. 
\begin{enumerate}[label=(\roman*)]
\item[(i)] 
Let $\varphi\colon I\to T$ be a homomorphism of $T$-modules. It is enough to show that there exists an element $f\in T$ such that $\varphi(g) = fg$ for every $g\in I$.
Define $f_i=\varphi(\alpha_1^{a_i}\alpha_2^{b_i})$ for $i=0,\ldots, s$. Then for each $i\in \{1, \ldots, s\}$ we have relations of the form
\begin{equation}\label{eqn:lemma_vanishing_of_exts}
\alpha_2^{b_i-b_{i-1}}f_{i-1} = \varphi(\alpha_1^{a_{i-1}}\alpha_2^{b_i}) =  \alpha_1^{a_{i-1}-a_i}f_i.
\end{equation}
From Equation \eqref{eqn:lemma_vanishing_of_exts} for $i=s$ we deduce that $\alpha_1^{a_{s-1}}$ divides $f_{s-1}$. It follows by induction that $\alpha_1^{a_i}$ divides $f_i$ for all $i$. Thus, $f_0 = \alpha_1^{a_0}f$ for some $f\in T$. From Equation \eqref{eqn:lemma_vanishing_of_exts} we conclude that $f_i = \alpha_1^{a_i}\alpha_2^{b_i}f$ for each $i$.

\item[(ii)]
We start with showing that $\Ext^1_T(I,T)_{>0} = 0$. Consider the canonical minimal graded free resolution 
\[
\bigoplus_{i=1}^s T(-a_{i-1}-b_i) \to \bigoplus_{i=0}^s T(-a_i-b_i) \to I \to 0
\]
of $I$ (see \cite[Prop.~3.1]{miller2004combinatorial}).
Applying the functor  $\Hom_T(-, T)$ to the above resolution, we obtain for every integer $c$ a $\Bbbk$-linear map 
\[
\psi_c \colon \bigoplus_{i=0}^s T(a_i+b_i)_c \to \bigoplus_{i=1}^s T(a_{i-1}+b_i)_c.
\]
We claim that $\psi_c$ is surjective for every $c > 0$. Observe that $\ker \psi_c \cong \Hom_T(I,T)_c \cong T_c$ by part~(i). Therefore, the claim is a consequence of the  calculation
\[
\dim_\Bbbk \bigoplus_{i=0}^s T_{a_i+b_i+c} = (s+1)(c+1) + \sum_{i=1}^s (a_{i-1}+b_i) = \dim_\Bbbk \bigoplus_{i=1}^s T_{a_{i-1}+b_i+c} + \dim_\Bbbk T_c.
\]
Since $\psi_c$ is surjective for positive $c$, it follows that $\Ext_T^1(I, T)_{>0} = 0$. 

Now we prove that $\Ext^1_T(I,T/I)_{>0}=0$. Consider the following part of the long exact sequence of $\Ext$ groups obtained from the short exact sequence $0\to I \to T \to T/I \to 0$ by applying the functor $\Hom_T(I,-)$:
\[
\ldots \to \Ext^1_T(I,T)_{>0} \to \Ext^1_T(I,T/I)_{>0} \to \Ext^2_T(I,I)_{>0}\to \ldots.
\]
We have shown that $\Ext^1_T(I,T)_{>0} = 0$. Moreover, $\Ext^2_T(I,I)_{>0}$ since $I$ has projective dimension~$1$. It follows that $\Ext^1_T(I,T/I)_{>0}=0$.
\end{enumerate}
\end{proof}

\begin{proof}[Proof of Proposition \ref{p:tangent_space_of_extended_ideal}]
Consider the canonical minimal free resolution of $I$
\[
0\to \bigoplus_{a\in \ZZ}T(-a)^{\rho_{I}(a)} \to \bigoplus_{b\in \ZZ}T(-b)^{\mu_{I}(b)} \to I \to 0.
\]
Applying the functor $\Hom_T(-, T/I)_{>0}$ and using Lemma \ref{l:vanishing_of_exts}(ii) we get an exact sequence
\[
0 \to \Hom_T(I, T/I)_{>0} \to \bigoplus_{b\in \ZZ}\Hom_T(T(-b)^{\mu_{I}(b)}, T/I)_{>0} \to \bigoplus_{a\in \ZZ}\Hom_T(T(-a)^{\rho_{I}(a)}, T/I)_{>0} \to 0.
\]
This can be rewritten as
\[
0 \to \Hom_T(I, T/I)_{>0} \to \bigoplus_{b\in \ZZ}(T/I)^{^{\mu_{I}(b)}}_{>b} \to \bigoplus_{a\in \ZZ}(T/I)^{^{\rho_{I}(a)}}_{>a} \to 0.
\]
Thus,
\[
\dim_\Bbbk \Hom_T(I, T/I)_{>0} = \sum_{\mathbf{u}\in M_I} \sum_{c>|\mathbf{u}|} \dim_\Bbbk (T/I)_c  - \sum_{\mathbf{u}\in R_I} \sum_{c>|\mathbf{u}|} \dim_\Bbbk (T/I)_c.
\]
This is equivalent to Equation \eqref{eqn:formula_for_positive_tangent_space}.
\end{proof}
\subsection{Proof of Theorem \ref{t:almost_general_implies_lip}}\label{ss:proof_of_almost_general}
This subsection is devoted to proving Theorem \ref{t:almost_general_implies_lip}. We will start with a series of lemmas. The first one concerns Hilbert functions of saturated ideals defining zero-dimensional closed subschemes of $\mathbb{P}^2$.
\begin{lemma}\label{l:properties_of_hilbert_function_of_saturated_ideal}
Let $I\subseteq S$ be a homogeneous, saturated ideal such that the Hilbert polynomial of $S/I$ is constant. Then:
\begin{itemize}\itemsep0em
\item[(i)] $H_{S/I}(d+1)-H_{S/I}(d)\geq 0$ for all integers $d$;
\item[(ii)] If $H_{S/I}(d) = H_{S/I}(d-1)$ for a positive integer $d$, then $H_{S/I}(d+1)=H_{S/I}(d)$.
\end{itemize}
\end{lemma}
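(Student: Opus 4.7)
The plan is to reduce both parts to studying a general hyperplane section and reading off the Hilbert function of the quotient. Since the Hilbert polynomial of $S/I$ is constant, the subscheme defined by $I$ is zero-dimensional; and since $I$ is saturated, $\mathfrak{m}$ is not an associated prime of $S/I$. Therefore every associated prime of $S/I$ has height $2$ (they are precisely the defining primes of the points of $\Proj S/I$, plus possibly embedded points of height $2$). By prime avoidance, applied to the degree-$1$ components of these finitely many primes, there exists a linear form $\ell \in S_1$ avoiding all associated primes of $S/I$, hence a non-zero-divisor on $S/I$.

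This produces a short exact sequence of graded $S$-modules
\[
0 \to (S/I)(-1) \xrightarrow{\,\cdot\ell\,} S/I \to S/(I + (\ell)) \to 0,
\]
and taking dimensions in each degree yields
\[
H_{S/I}(d) - H_{S/I}(d-1) = H_{S/(I+(\ell))}(d) \geq 0,
\]
which proves (i). For (ii), note that the condition $H_{S/I}(d) = H_{S/I}(d-1)$ is exactly $H_{S/(I+(\ell))}(d) = 0$, i.e.\ $S_d \subseteq I + (\ell)$. But then
\[
S_{d+1} = S_1 \cdot S_d \subseteq S_1 \cdot (I + (\ell)) \subseteq I + (\ell),
\]
so $H_{S/(I+(\ell))}(d+1) = 0$ as well, which by the short exact sequence gives $H_{S/I}(d+1) = H_{S/I}(d)$. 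Equivalently, this is the trivial observation that, in the polynomial ring $R := S/(\ell) \cong \Bbbk[x,y]$, a homogeneous ideal which contains $R_d$ automatically contains $R_{d+1}$.

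There is no real obstacle: the only step requiring care is the existence of a general non-zero-divisor linear form, and this is guaranteed precisely by the saturation hypothesis together with the zero-dimensionality of the subscheme, via prime avoidance (which is valid since $\Bbbk$ is algebraically closed, hence infinite).
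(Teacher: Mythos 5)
Your proof is correct and follows essentially the same strategy as the paper: find a linear non-zero-divisor on $S/I$ via prime avoidance (using that $\mathfrak{m}$ is not associated because $I$ is saturated and $\Bbbk$ is infinite), then deduce (i) from injectivity of multiplication by $\ell$. For (ii) the paper lifts an arbitrary element of $(S/I)_{d+1}$ by hand through $\alpha_0, \alpha_1, \alpha_2$, whereas you pass to $S/(I+(\ell))$ and observe that an ideal containing $S_d$ must contain $S_{d+1}$; this is a cleaner phrasing of the same underlying argument, not a genuinely different route.
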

\begin{proof}
We may assume that $I\neq (1)$.
\begin{itemize}\itemsep0em
\item[(i)]
We shall prove that there is an element $f\in S_1$ that is a non-zero divisor on $S/I$. Let $\mathfrak{p}_1,\ldots, \mathfrak{p}_k$ be the associated primes of $S/I$. It is enough to show that $\bigcup_{i=1}^k (\mathfrak{p}_i)_1 \neq S_1$. Suppose that it does not hold. Since $\Bbbk$ is infinite we have $(\mathfrak{p}_i)_1 = S_1$ for some $i$ and therefore, $\mathfrak{m}$ is an associated prime of $S/I$. This gives a contradiction since $I$ is saturated.
Thus, the map $(S/I)_d \xrightarrow{\cdot f} (S/I)_{d+1}$ is injective for every $d$.

\item[(ii)] Suppose that $H_{S/I}(d)=H_{S/I}(d-1)$. Then $(S/I)_{d-1} \xrightarrow{\cdot f} (S/I)_d$ is an isomorphism of $\Bbbk$-vector spaces by the proof of (i). We claim that also  $(S/I)_{d} \xrightarrow{\cdot f} (S/I)_{d+1}$ is an isomorphism. It is injective since $f$ is a non-zero divisor on $S/I$. Let $g\in (S/I)_{d+1}$. Then $g=\alpha_0h_0+\alpha_1h_1+\alpha_2h_2$ for some $h_0,h_1,h_2\in (S/I)_{d}$. By assumptions, there are $k_0, k_1,k_2\in (S/I)_{d-1}$ such that $fk_0=h_0, fk_1=h_1$, and $fk_2=h_2$. It follows that $g=f(\alpha_0k_0+\alpha_1k_1+\alpha_2k_2)$.
\end{itemize}
\end{proof}

We will use the following observation to show that certain $\Ext$ groups vanish.

\begin{lemma}\label{l:exts_to_field}
Given a finitely generated graded $S$-module $M$ and an integer $e\in \mathbb{Z}$ we have 
\[
\dim_\Bbbk \Ext^i_S(M, \Bbbk)_e = \beta_{i,-e}(M).
\]
\end{lemma}
\begin{proof}
Apply the functor $\Hom_S(-, \Bbbk)$ to a minimal graded free resolution $P_\bullet$ of $M$. The Ext groups $\Ext^i_S(M, \Bbbk)$ can be computed as cohomology groups of the obtained complex. Since the $i$-th differential in $P_\bullet$ maps $P_i$ into $\mathfrak{m}P_{i-1}$, the differentials in the complex $\Hom_S(P_\bullet, \Bbbk)$  are zero.
Therefore, $\dim_{\Bbbk} \Ext_S^i(M, \Bbbk)_e = \dim_\Bbbk \Hom_S(P_i, \Bbbk)_e = \beta_{i, -e} (M)$.
\end{proof}

\begin{remark}\label{r:for_irreducible_slip_is_easy}
We have a natural map $\varphi_{r, \PP^2}\colon\Hilb_S^{h_{r, 2}} \to \mathcal{H}ilb_r(\mathbb{P}^2)$ given on closed points by $[I]\mapsto [\Proj S/I]$. This is a closed map that maps $\Sip_{r, 2}$ onto an open subset of the locus of reduced subschemes. Since $\mathcal{H}ilb_r(\mathbb{P}^2)$ is irreducible, it follows that $\varphi_{r, \PP^2}(\Slip_{r, 2}) = \mathcal{H}ilb_r(\mathbb{P}^2)$ set-theoretically. In particular, for every closed point $[I]\in \Hilb_S^{h_{r, 2}}$ there is a point $[I']\in \Slip_{r, 2}$ with $\overline{I} = \overline{I'}$.
As a special case, if $I$ is saturated and $[I]\in \Hilb_S^{h_{r, 2}}$ then $[I] \in \Slip_{r, 2}$.
\end{remark}

The next lemma enables us to consider only ideals $I$ for which the Hilbert function of $S/\overline{I}$ satisfies a more restrictive condition \eqref{eq:condition_star2} instead of \eqref{eq:condition_star}.

\begin{lemma}\label{l:reduction_to_star2}
Let $r$ be a positive integer and $[I]\in \Hilb_S^{h_{r, 2}}$. If the Hilbert function $f$ of $S/\overline{I}$ satisfies \eqref{eq:condition_star} for some integers $d,e$ then $[I]\in \Slip_{r,2}$ unless the following holds:
\begin{equation}\label{eq:condition_star2}
f \text{ satisfies } \eqref{eq:condition_star} \text{ and } \dim_\Bbbk S_{e-1} < d < r < \dim_\Bbbk S_e \tag{$\star\star$}.
\end{equation}
\end{lemma}
\begin{proof}
Assume that $[I]\in \Hilb_S^{h_{r, 2}}\setminus \Slip_{r, 2}$ and the Hilbert function of $S/\overline{I}$ satisfies \eqref{eq:condition_star} for some integers $d,e$.

Suppose that $d = r$. Then $f=h_{r, 2}$, so $[I]\in \Slip_{r, 2}$ by Remark \ref{r:for_irreducible_slip_is_easy}. Thus, by Lemma \ref{l:properties_of_hilbert_function_of_saturated_ideal}, we may assume that $d<r$.
Moreover, if $\dim_\Bbbk S_e \leq r$, then $[I]\in \Slip_{r, 2}$ since in that case $[I]$ is the unique closed point of the fiber over $\varphi_{r, \PP^2}([I])$ of the natural map $\varphi_{r, \PP^2}\colon \Hilb_S^{h_{r, 2}} \to \mathcal{H}ilb_r(\mathbb{P}^2)$ from Remark~\ref{r:for_irreducible_slip_is_easy}. Therefore, we may assume that $r < \dim_\Bbbk S_e$. We claim that it is enough to consider the case that $f(e-1)=h_{r, 2}(e-1)=\dim_\Bbbk S_{e-1}$. Indeed, otherwise $f(e-1)=h_{r, 2}(e-1)=r$ and this contradicts Lemma \ref{l:properties_of_hilbert_function_of_saturated_ideal} since $f(e)=d<r=f(e-1)$. Using Lemma \ref{l:properties_of_hilbert_function_of_saturated_ideal}(i) we obtain $\dim_\Bbbk S_{e-1} \leq d$ and moreover by Lemma \ref{l:properties_of_hilbert_function_of_saturated_ideal}(ii) this inequality is strict, since $r=f(e+1) > d$.
\end{proof}

For a fixed positive integer $r$, consider the set 
\[
\Omega_r = \{ f \colon \mathbb{Z} \to \mathbb{N} \mid f \text{ satisfies } \eqref{eq:condition_star2} \text{ and there exists } [I]\in\Hilb_S^{h_{r,2}} 
\text{ such that } S/\overline{I} \text{ has Hilbert function } f\}.
\]
The following lemma will enable us to reduce the proof of Theorem \ref{t:almost_general_implies_lip} to constructing, for every $f\in \Omega_r$, a point $[I_f]\in \Slip_{r, 2}$ that satisfies $H_{S/\overline{I_f}} = f$ and such that $[I_f]$ is smooth in $\Hilb_S^{h_{r, 2}}$.
 
\begin{lemma}\label{l:stratas_of_functions_are_irreducible}
Let $f\in \Omega_r$ for some positive integer $r$. Then the locus of closed points $[I]$ of $\Hilb_S^{h_{r, 2}}$ such that $H_{S/\overline{I}} = f$ is irreducible.
\end{lemma}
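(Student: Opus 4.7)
The plan is to exhibit the locus as a Grassmann bundle over the Hilbert-function stratum $\mathcal{B}_f := \{[Z] \in \mathcal{H}ilb_r(\PP^2) : H_{S/I_Z} = f\}$ and then to invoke irreducibility of $\mathcal{B}_f$.

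First, I would use \eqref{eq:condition_star2} to pin down the structure of an $[I]$ in the locus. Since $f(e-1) = h_{r, 2}(e-1) = \dim_\Bbbk S_{e-1}$, we have $\overline{I}_{e-1} = 0$. Since $f(a) = h_{r, 2}(a)$ for every $a \neq e$, the inclusion $I_a \subseteq \overline{I}_a$ is an equality in each such degree (both sides share the same dimension). Hence $I$ is determined by the pair $(\overline{I}, I_e)$, where $I_e \subseteq \overline{I}_e$ is an arbitrary subspace of codimension $r-d$. Conversely, any such pair with $\overline{I} \in \mathcal{B}_f$ yields a valid $[I] \in \Hilb_S^{h_{r, 2}}$ whose saturation is $\overline{I}$: the ideal property $S_1 \cdot I_e \subseteq I_{e+1}$ follows from $S_1 \cdot I_e \subseteq S_1 \cdot \overline{I}_e \subseteq \overline{I}_{e+1} = I_{e+1}$, and any $g \in \overline{I}_e$ satisfies $\mathfrak{m} \cdot g \subseteq \overline{I}_{e+1} = I_{e+1}$, so $\overline{I}_e$ is recovered as the degree $e$ part of the saturation of the constructed $I$.

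Globalizing, one endows $\mathcal{B}_f$ with its reduced induced scheme structure as a locally closed subset of $\mathcal{H}ilb_r(\PP^2)$. On $\mathcal{B}_f$ the degree $e$ part of the pulled back universal ideal is locally free of rank $\dim_\Bbbk S_e - d$ (the rank being locally constant by definition of the stratum), and the locus in the lemma is identified with the relative Grassmannian of rank $\dim_\Bbbk S_e - r$ subbundles of this sheaf. Since a Grassmann bundle over an irreducible base is irreducible, it suffices to show that $\mathcal{B}_f$ is irreducible.

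The main obstacle is this last step. For $\PP^2$ the irreducibility of Hilbert-function strata is classical: every $[Z] \in \mathcal{B}_f$ has codimension $2$, so by the Hilbert--Burch theorem $I_Z$ admits a minimal free resolution $0 \to F_1 \to F_0 \to I_Z \to 0$ whose possible shapes are controlled by $f$; the open subset of $\mathcal{B}_f$ realizing the generic Betti numbers is then the image of an irreducible parameter space of Hilbert--Burch matrices of fixed shape under a natural morphism, and all other points of $\mathcal{B}_f$ arise as specializations of these. Alternatively, for $f$ satisfying \eqref{eq:condition_star2} the Hilbert function forces each member of $\mathcal{B}_f$ to lie on an explicit family of degree $e$ curves (with $\overline{I}_e$ encoded in terms of this family), yielding a direct irreducible parameterization whose closure is $\mathcal{B}_f$. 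Either route concludes the proof.
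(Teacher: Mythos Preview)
Your proposal is correct and follows essentially the same approach as the paper: both exhibit the locus as a Grassmannian fibration over the Hilbert-function stratum in $\mathcal{H}ilb_r(\PP^2)$ and reduce to the irreducibility of that stratum. The only difference is that the paper dispatches the irreducibility of the base stratum by citing Gotzmann \cite{Got88}, whereas you sketch the Hilbert--Burch argument yourself; your fibre analysis is also spelled out in more detail than in the paper, which simply identifies the fibre with $\operatorname{Gr}(\dim_\Bbbk S_e - r,\, \dim_\Bbbk S_e - d)$.
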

\begin{proof}
Denote this locus by $V$. Let $W\subseteq \mathcal{H}ilb_r(\PP^2)$ be the locally closed subset defined by closed points corresponding to closed subschemes of $\PP^2$ with Hilbert function $f$.
By definition, $V$ are the closed points of the preimage of $W$ under 
\[
\varphi_{r, \PP^2} \colon \Hilb_S^{h_{r, 2}}\to \mathcal{H}ilb_r(\mathbb{P}^2).
\]
The locus $W$ is irreducible by \cite{Got88}. Let $t=\dim_\Bbbk S_e$. Then for every $[I]\in V$ the fiber of $\varphi_{r, \PP^2} \colon V \to W$ over the closed point $[\Proj S/I]$ is $\operatorname{Gr}(t-r,\overline{I}_e)$. Thus, $V$ is irreducible as claimed. 
\end{proof}

Fix a positive integer $r$ and a function $f\in \Omega_r$, or equivalently a pair of integers $d,e$ corresponding to $f$. To simplify notation let $s:=\dim_\Bbbk S_{e-1}$ and we define $A_i = \alpha_1^i\alpha_2^{e-i}$ for $0\leq i \leq e$, $B_i = \alpha_1^i\alpha_2^{e+1-i}$ for $0\leq i \leq e+1$ and $C_i = \alpha_1^i\alpha_2^{e+2-i}$ for $0\leq i \leq e+2$ so as to make it is easier to distinguish between the generators of different degrees.
We define the ideals
\begin{equation}\label{eq:def_jf}
J_f=(A_e, A_{e-1}, \ldots, A_{d-s}, B_{d-s-1}, B_{d-s-2}, \ldots, B_{r-d}, C_{r-d-1}, C_{r-d-2}, \ldots, C_0)
\end{equation}
and
\begin{equation}
I_f  = (A_e, \ldots, A_{r-s},
A_{r-s-1}\alpha_2, A_{r-s-1}\alpha_0, \ldots, A_{d-s}\alpha_2, A_{d-s}\alpha_0,
  B_{d-s-1}, \ldots, B_{r-d}, C_{r-d-1}, \ldots, C_0).
\label{eq:def_if}
\end{equation}
Note that $A_i\alpha_2 = B_i$ but we have written $I_f$ in the form as above since it will be more convenient in the proof of the following lemma.

\begin{lemma}\label{l:I_f_in_V}
In the above notation, the Hilbert function of $S/I_f$ is $h_{r,2}$ and the Hilbert function of $S/\overline{I_f}$ is $f$. Moreover, $J_f$ is the saturation of $I_f$.
\end{lemma}
\begin{proof}
We start with showing that $J_f$ is a saturated ideal and that the Hilbert function of $S/J_f$ is $f$.
Indeed, $J_f$ is an extension of the ideal $\mathfrak{a}_f=J_f\cap T$ in $T$ so it is saturated with respect to $\alpha_0$ and thus, is saturated with respect to $\mathfrak{m}$. Moreover, $H_{S/J_f}(a) = \sum_{b=0}^a H_{T/\mathfrak{a}_f}(b)$ and the latter sum can be computed from the staircase diagram of $\mathfrak{a}_f$.

It follows from the generators of $I_f$ and $J_f$ displayed above, that the saturation of $I_f$ contains $J_f$. Therefore, $\overline{I_f} = J_f$ since $J_f$ is saturated. Because $S/J_f$ has Hilbert function $f$, the Hilbert function of $S/I_f$ is $h_{r,2}$.
\end{proof}

We will now find a saturated ideal $K_f$ such that the initial ideal of $K_f$ with respect to an appropriate monomial order is $I_f$.
Let
\begin{equation}\label{al:generators_of_kf}
\begin{split}
K_f = (&A_e, \ldots, A_{r-s},
 A_{r-s-1}\alpha_2, A_{r-s-1}\alpha_0 + B_{r-d-1}, \ldots, A_{d-s}\alpha_2, A_{d-s}\alpha_0 + B_0,\\
 &B_{d-s-1}, \ldots, B_{r-d}, C_{r-d-1}, \ldots, C_0).
\end{split}
\end{equation}

\begin{lemma}\label{l:kf_in_hilb}
In the above notation, the initial ideal of $K_f$ with respect to the lex order $>$ with $\alpha_0 >  \alpha_2 > \alpha_1$ is $I_f$. In particular, $S/K_f$ has Hilbert function $h_{r,2}$.
\end{lemma}
\begin{proof}
All $S$-polynomials of the generators displayed in Equation \eqref{al:generators_of_kf} are in the ideal $(C_{e+2}, \ldots, C_0)$. It follows that this set of generators satisfies the B\"uchberger criterion (see \cite[Thm.~6~in~Ch.~2~\S6]{CLO}) and is thus, a Gr\"obner basis. In particular, the initial ideal $\operatorname{in_<}(K_f)$ is $I_f$ so $S/K_f$ has Hilbert function $h_{r,2}$ by Lemma \ref{l:I_f_in_V}. 
\end{proof}

In order to prove that $K_f$ is saturated we will use the following result.

\begin{lemma}\label{l:initial_ideal_of_saturation}
Let $\mathfrak{a}$ be an ideal in $S$ and $<$ a monomial order. Then $
\operatorname{in}_{<}(\overline{\mathfrak{a}}) \subseteq \overline{\operatorname{in}_{<}(\mathfrak{a}) }.
$
\end{lemma}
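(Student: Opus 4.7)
The plan is to reduce the containment to a statement about individual elements. Recall that by definition $\operatorname{in}_<(\mathfrak{b})$ is the ideal generated by the initial terms $\operatorname{in}_<(g)$ as $g$ ranges over the nonzero elements of $\mathfrak{b}$. Consequently, it suffices to show that for every nonzero $f \in \overline{\mathfrak{a}}$ one has $\operatorname{in}_<(f) \in \overline{\operatorname{in}_<(\mathfrak{a})}$; the inclusion of ideals then follows because saturations are ideals.

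Given such an $f$, the definition of saturation with respect to $\mathfrak{m}$ provides a positive integer $k$ such that $\alpha_i^k f \in \mathfrak{a}$ for every $i = 0, \ldots, n$. The key (elementary) observation I would invoke is that initial terms are multiplicative against monomials: for any monomial $M$ and any nonzero $g$, $\operatorname{in}_<(M g) = M \operatorname{in}_<(g)$. Applying this to $M = \alpha_i^k$ yields
\[
\alpha_i^k \operatorname{in}_<(f) \;=\; \operatorname{in}_<(\alpha_i^k f) \;\in\; \operatorname{in}_<(\mathfrak{a})
\]
for every $i$. Since $\mathfrak{m}^{k(n+1)} \subseteq (\alpha_0^k, \ldots, \alpha_n^k)$, this gives $\mathfrak{m}^{k(n+1)} \operatorname{in}_<(f) \subseteq \operatorname{in}_<(\mathfrak{a})$, i.e.\ $\operatorname{in}_<(f) \in \overline{\operatorname{in}_<(\mathfrak{a})}$, as required.

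No serious obstacle is expected: the proof is essentially just unwinding the definitions of saturation and of initial ideal, combined with the compatibility of initial terms with multiplication by monomials. The only point that requires any thought at all is the final passage from the family of relations $\alpha_i^k \operatorname{in}_<(f) \in \operatorname{in}_<(\mathfrak{a})$ to membership of $\operatorname{in}_<(f)$ in the $\mathfrak{m}$-saturation, and this is handled by the standard inclusion $\mathfrak{m}^{k(n+1)} \subseteq (\alpha_0^k, \ldots, \alpha_n^k)$.
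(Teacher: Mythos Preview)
Your proof is correct and follows essentially the same route as the paper's: take $f\in\overline{\mathfrak{a}}$, find $k$ with $\alpha_i^k f\in\mathfrak{a}$ for all $i$, use $\operatorname{in}_<(\alpha_i^k f)=\alpha_i^k\operatorname{in}_<(f)$ to get $\alpha_i^k\operatorname{in}_<(f)\in\operatorname{in}_<(\mathfrak{a})$, and conclude. The paper's version (written in the context $S=\Bbbk[\alpha_0,\alpha_1,\alpha_2]$) simply jumps from this last containment directly to $\operatorname{in}_<(f)\in\overline{\operatorname{in}_<(\mathfrak{a})}$ without spelling out the $\mathfrak{m}^{k(n+1)}\subseteq(\alpha_0^k,\ldots,\alpha_n^k)$ step, but otherwise the arguments are identical.
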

\begin{proof}
Let $f\in \overline{\mathfrak{a}}$. Then there is an integer $l$ such that $(\alpha_0^lf, \alpha_1^lf, \alpha_2^lf) \subseteq \mathfrak{a}$. Therefore, 
\[
(\alpha_0^l\operatorname{in}_<(f),\alpha_1^l\operatorname{in}_<(f),\alpha_2^l\operatorname{in}_<(f) ) = (\operatorname{in}_<(\alpha_0^lf), \operatorname{in}_<(\alpha_1^lf), \operatorname{in}_<(\alpha_2^lf)) \subseteq \operatorname{in}_<(\mathfrak{a}).
\]
Thus, $\operatorname{in}_<(f)\in \overline{\operatorname{in}_{<}(\mathfrak{a})}$ which finishes the proof.
\end{proof}

\begin{lemma}\label{l:kf_is_saturated}
In the above notation, $K_f$ is a saturated ideal. In particular, $[I_f] \in \Slip_{r,2}$.
\end{lemma}
\begin{proof}
Let $>$ be the lex order with $\alpha_0 > \alpha_2 > \alpha_1$. From Lemmas \ref{l:I_f_in_V}, \ref{l:kf_in_hilb} and \ref{l:initial_ideal_of_saturation} we obtain
\begin{equation}\label{eq:inclusion_of_saturations}
\operatorname{in}_<(\overline{K_f})\subseteq \overline{\operatorname{in}_<(K_f)} = \overline{I_f} = J_f.
\end{equation}
Suppose that $\overline{K_f}\neq K_f$. Then $I_f = \operatorname{in}_<(K_f) \subsetneq \operatorname{in}_<(\overline{K_f}) \subseteq J_f$, where the containment in the middle is strict since the two ideals have different Hilbert functions by assumption. Since $I_f$, $J_f$ differ only in degree $e$, it follows that there is an element $g\in S_{e} \cap \overline{K_f}$ such that $\operatorname{in}_<(g)$ does not belong to the set of monomial generators of $I_f$ of degree $e$. From Equations \eqref{eq:def_jf} and \eqref{eq:inclusion_of_saturations} we get that $g=\sum_{i=d-s}^{e} a_i A_i$ for some $a_i\in \Bbbk$. 
We assumed that $\operatorname{in}_<(g)\notin I_f$. Thus, by Equation \eqref{eq:def_if}, we have $a_i \neq 0$ for some $i\in \{d-s, \ldots, r-s-1\}$.
Furthermore, we may assume that $a_i = 0$ for $i=r-s, \ldots, e$ by Equation \eqref{al:generators_of_kf}.
Multiplying $g$ by $\alpha_0^2$ and using the generators of $K_f$ given in Equation \eqref{al:generators_of_kf} we obtain
\[
g':=-\alpha_0^2g + \sum_{i={d-s}}^{r-s-1}a_i\alpha_0(A_i\alpha_0+B_{i+s-d}) = \sum_{i={d-s}}^{r-s-1} a_i B_{i+s-d}\alpha_0  \in \overline{K_f}.
\]
We claim that it is not possible. By Equation \eqref{eq:inclusion_of_saturations}, it is enough to show that no monomial of the form $B_{j}\alpha_0$ for $j\in \{0,\ldots, r-d-1\}$ is in $J_f$. This is clear since monomials of degree $e+2$ in $J_f$ that are divisible by $\alpha_0$ are also divisible by $\alpha_1^{r-d}$.

Since $K_f$ is saturated, it follows from Remark \ref{r:for_irreducible_slip_is_easy} that $[K_f] \in \Slip_{r,2}$. Therefore, $[I_f]\in \Slip_{r,2}$ by Lemma \ref{l:kf_in_hilb}.
\end{proof}

Finally, we will show that $[I_f]$ is a smooth point of $\Hilb_S^{h_{r, 2}}$. It is enough to show that 
\[
\dim_\Bbbk T_{[I_f]}\Hilb_S^{h_{r,2}} \leq \dim_\Bbbk \Slip_{r,2} = 2r.
\]
Lemmas \ref{l:ts_estimation_term_1} - \ref{l:ts_dim_is_2r} are devoted to this calculation. In what follows, we will consider $\Bbbk$ as a $\ZZ$-graded $S$-module via isomorphism $\Bbbk \cong S/\mathfrak{m}$. Then $J_f/I_f \cong \Bbbk(-e)^{\oplus (r-d)}$.

\begin{lemma}\label{l:ts_estimation_term_1}
In the above notation, $\dim_\Bbbk \Hom_S(J_f/I_f, S/I_f)_0 = (r-d)^2$.
\end{lemma}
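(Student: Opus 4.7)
The plan is to identify $\Hom_S(J_f/I_f, S/I_f)_0$ with the degree $e$ part of the $\mathfrak{m}$-socle of $S/I_f$, tensored up by $r-d$.

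First I would observe that as a $\mathbb{Z}$-graded $S$-module, $J_f/I_f$ is concentrated in degree $e$ (this is built into the remark just before the lemma, since $(J_f)_{e+1} = (I_f)_{e+1}$: both functions $f$ and $h_{r,2}$ agree in every degree $\neq e$, and in particular in degree $e+1$). Because $J_f/I_f$ lives only in degree $e$, the action of $\mathfrak{m}$ on it is zero, confirming the isomorphism $J_f/I_f \cong \Bbbk(-e)^{\oplus(r-d)}$ already recorded. Consequently a degree $0$ homomorphism $\varphi\colon J_f/I_f\to S/I_f$ is determined by choosing, for each of the $r-d$ standard generators, an element $[g]\in (S/I_f)_e$ satisfying $\mathfrak{m}\cdot [g] = 0$.

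Next I would note that the condition $\mathfrak{m}\cdot [g] = 0$ means $g\in (I_f\colon \mathfrak{m})_e$, so
\[
\dim_\Bbbk \Hom_S(J_f/I_f, S/I_f)_0 = (r-d)\cdot \dim_\Bbbk \big((I_f\colon \mathfrak{m})/I_f\big)_e.
\]
To finish the proof I have to show that $(I_f\colon \mathfrak{m})_e = (J_f)_e$. The inclusion $\subseteq$ is clear from $(I_f\colon \mathfrak{m})\subseteq \overline{I_f} = J_f$. For the opposite inclusion, I use once again that $(J_f)_{e+1} = (I_f)_{e+1}$, so that $\mathfrak{m}\cdot (J_f)_e \subseteq (J_f)_{e+1} = (I_f)_{e+1}$, giving $(J_f)_e \subseteq (I_f\colon \mathfrak{m})_e$. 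Hence the $\mathfrak{m}$-socle of $S/I_f$ in degree $e$ has dimension $\dim_\Bbbk (J_f/I_f)_e = r-d$, and the total dimension equals $(r-d)^2$.

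I do not expect any real obstacle here; the argument is essentially the observation that morphisms from a semisimple module land in the socle of the target, combined with the very definition of saturation. The only subtlety is being careful with the direction of the inclusion $(I_f\colon \mathfrak{m})_e = (J_f)_e$, which is controlled entirely by the hypothesis that $f$ differs from $h_{r,2}$ only in degree $e$.
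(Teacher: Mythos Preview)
Your argument is correct. The identification of $\Hom_S(\Bbbk(-e),S/I_f)_0$ with the degree~$e$ socle $((I_f:\mathfrak{m})/I_f)_e$ is exactly right, and your two inclusions showing $(I_f:\mathfrak{m})_e=(J_f)_e$ are clean: one from $(I_f:\mathfrak{m})\subseteq (I_f:\mathfrak{m}^\infty)=\overline{I_f}=J_f$, the other from $(J_f)_{e+1}=(I_f)_{e+1}$.

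The paper reaches the same number by a slightly different route. Instead of computing the socle of $S/I_f$ directly, it applies $\Hom_S(J_f/I_f,-)_0$ to the short exact sequence $0\to J_f/I_f\to S/I_f\to S/J_f\to 0$, computes $\dim_\Bbbk\Hom_S(J_f/I_f,J_f/I_f)_0=(r-d)^2$ trivially, and then argues that $\Hom_S(J_f/I_f,S/J_f)_0=0$ using that $\alpha_0$ is a non-zero-divisor on $S/J_f$ (since $(J_f:\alpha_0)=J_f$) while $\alpha_0$ kills $J_f/I_f$. Your approach is more self-contained and avoids the exact sequence altogether; the paper's has the incidental advantage that the same exact sequence is reused in the next two lemmas. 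Substantively the two arguments encode the same fact---that the image of any such homomorphism lands in $J_f/I_f$---but yours extracts it via saturation and degree comparison rather than via a specific regular element.
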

\begin{proof}
Since $J_f/I_f \cong \Bbbk(-e)^{\oplus (r-d)}$ we have
\[
\begin{split}
&\dim_\Bbbk \Hom_S(J_f/I_f, J_f/I_f)_0 =(r-d)^2 \dim_\Bbbk \Hom_S(\Bbbk(-e), \Bbbk(-e))_0 \\
& =  (r-d)^2 \dim_\Bbbk \Hom_S(\Bbbk, \Bbbk)_0 =  (r-d)^2.
\end{split}
\]
Since $(J_f\colon \alpha_0) = J_f$ and $\alpha_0\cdot J_f/I_f =0$ it follows that $\Hom_S(J_f/I_f, S/J_f)_0 = 0$. Therefore, 
\[
\dim_\Bbbk \Hom_S(J_f/I_f, S/I_f)_0 = \dim_\Bbbk \Hom_S(J_f/I_f, J_f/I_f)_0 = (r-d)^2
\]
from the long exact sequence obtained by applying $\Hom_S(J_f/I_f, -)_0$ to the short exact sequence
\begin{equation}\label{eq:main_exact_sequence}
0\to J_f/I_f \to S/I_f \to S/J_f \to 0.
\end{equation}
\end{proof}

\begin{lemma}\label{l:ts_estimation_term_2}
In the above notation, $\dim_\Bbbk \Ext^1_S(J_f/I_f, S/I_f)_0 = (r-d)^2$.
\end{lemma}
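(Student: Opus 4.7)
The strategy is to reduce the computation, via the short exact sequence \eqref{eq:main_exact_sequence}, to computing $\Ext^1_S(\Bbbk,S/J_f)_e$, and then exploit the fact that $J_f = \mathfrak{a}_f S$ is extended from $T$ so that $\alpha_0$ is a non-zero divisor on $S/J_f$.

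The first step is to apply the functor $\Hom_S(J_f/I_f,-)_0$ to the short exact sequence $0 \to J_f/I_f \to S/I_f \to S/J_f \to 0$. In the resulting long exact sequence, the term $\Hom_S(J_f/I_f, S/J_f)_0$ vanishes by exactly the same argument used in the proof of Lemma \ref{l:ts_estimation_term_1} (namely, $\alpha_0$ annihilates $J_f/I_f$ while $(J_f\colon\alpha_0)=J_f$). Moreover, using the isomorphism $J_f/I_f\cong \Bbbk(-e)^{\oplus (r-d)}$ stated before the previous lemma, we obtain $\Ext^i_S(J_f/I_f,J_f/I_f)_0 = (r-d)^2 \cdot \Ext^i_S(\Bbbk,\Bbbk)_0$, which vanishes for $i\geq 1$ because the Koszul resolution shows that $\Ext^i_S(\Bbbk,\Bbbk) = \Bbbk(i)^{\oplus \binom{3}{i}}$ is concentrated in degree $-i$. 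These two vanishings force an isomorphism
\[
\Ext^1_S(J_f/I_f, S/I_f)_0 \;\cong\; \Ext^1_S(J_f/I_f, S/J_f)_0 \;=\; (r-d)\cdot \Ext^1_S(\Bbbk, S/J_f)_e.
\]

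It remains to show $\dim_\Bbbk \Ext^1_S(\Bbbk, S/J_f)_e = r-d$. Since $J_f = \mathfrak{a}_f S$, the ring $S/J_f$ is isomorphic to $(T/\mathfrak{a}_f)[\alpha_0]$, so $\alpha_0$ is a non-zero divisor and $S/(J_f,\alpha_0) = T/\mathfrak{a}_f$. The short exact sequence $0\to (S/J_f)(-1)\xrightarrow{\alpha_0} S/J_f \to T/\mathfrak{a}_f \to 0$ yields, after applying $\Hom_S(\Bbbk,-)$ and using $\Hom_S(\Bbbk,S/J_f)=0$, a long exact sequence giving, in each degree, a connecting map $\Hom_T(\Bbbk,T/\mathfrak{a}_f)_{a+1} \to \Ext^1_S(\Bbbk,S/J_f)_a$ together with the action of $\alpha_0$ between consecutive degrees. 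Equivalently, I will invoke the change-of-rings spectral sequence $\Ext^p_T(\Bbbk,\Ext^q_S(T,S/J_f)) \Rightarrow \Ext^{p+q}_S(\Bbbk,S/J_f)$, which degenerates because the resolution $0\to S(-1)\xrightarrow{\alpha_0} S\to T\to 0$ shows $\Ext^1_S(T, S/J_f)\cong (T/\mathfrak{a}_f)(1)$ and $\Ext^q_S(T,S/J_f)=0$ for $q\neq 1$. This gives the identification
\[
\Ext^n_S(\Bbbk, S/J_f)_e \;\cong\; \Ext^{n-1}_T(\Bbbk, T/\mathfrak{a}_f)_{e+1}
\]
for all $n\geq 0$.

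Specializing to $n=1$, we obtain $\Ext^1_S(\Bbbk,S/J_f)_e \cong \Hom_T(\Bbbk,T/\mathfrak{a}_f)_{e+1}$, which is the degree $e+1$ part of the socle of $T/\mathfrak{a}_f$. Since $(T/\mathfrak{a}_f)_{e+2}=0$ (as computed from the staircase of $\mathfrak{a}_f$), all of $(T/\mathfrak{a}_f)_{e+1}$ is socle, and this vector space has dimension $r-d$, completing the proof. The main obstacle is the Ext computation in the third step; the rest is a routine long exact sequence manipulation combined with the direct sum decomposition of $J_f/I_f$.
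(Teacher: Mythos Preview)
Your proof is correct. The reduction to computing $\dim_\Bbbk \Ext^1_S(\Bbbk,S/J_f)_e$ via the long exact sequence of \eqref{eq:main_exact_sequence} and the vanishing of $\Ext^i_S(\Bbbk,\Bbbk)_0$ for $i\geq 1$ is exactly what the paper does. Where you diverge is in the final computation: the paper writes out the Koszul complex $\Hom_S(-,S/J_f)_e$ with explicit $3\times 3$ and $3\times 1$ matrices, and then analyses $\ker d_1/\operatorname{im} d_0$ by hand, decomposing an arbitrary element modulo $\operatorname{im} d_0$ to land in the subspace $\{(\overline{h_1''},0,0):h_1''\in\Bbbk[\alpha_1,\alpha_2]_{e+1}\}$ and checking that this subspace lies in $\ker d_1$; the answer $r-d$ then comes out as $\dim_\Bbbk \Bbbk[\alpha_1,\alpha_2]_{e+1}/(\mathfrak{a}_f)_{e+1}$. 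You instead exploit the change-of-rings identification coming from $S/J_f\cong (T/\mathfrak{a}_f)[\alpha_0]$ to get $\Ext^1_S(\Bbbk,S/J_f)_e\cong \Hom_T(\Bbbk,T/\mathfrak{a}_f)_{e+1}$, i.e.\ the degree $e+1$ socle, which is all of $(T/\mathfrak{a}_f)_{e+1}$ since $(T/\mathfrak{a}_f)_{e+2}=0$. Both routes end at the same vector space $(T/\mathfrak{a}_f)_{e+1}$; the paper's approach is more elementary and self-contained (no spectral sequence), while yours is shorter, identifies the answer structurally as a socle, and would generalise more readily to other extended ideals.
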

\begin{proof}
We claim that $\dim_\Bbbk \Ext^i_S(J_f/I_f, J_f/I_f)_0 = 0$ for $i=1,2$. It is enough to show that 
\[
\dim_\Bbbk \Ext^i_S (\Bbbk, \Bbbk)_0 = 0
\]
since $\Ext^i_S(J_f/I_f, J_f/I_f)_0  = (\Ext^i_S(\Bbbk, \Bbbk)_0)^{\oplus (r-d)^2}$. Therefore, the claim follows from Lemma \ref{l:exts_to_field}.

Applying the functor $\Hom_S(J_f/I_f, -)_0$ to the short exact sequence \eqref{eq:main_exact_sequence} we obtain 
\[
\dim_\Bbbk \Ext^1_S(J_f/I_f, S/I_f)_0 = \dim_\Bbbk \Ext^1_S(J_f/I_f, S/J_f)_0.
\]
We have $\Ext^1_S(J_f/I_f, S/J_f)_0 \cong ( \Ext^1_S(\Bbbk, S/J_f)_{e})^{\oplus {(r-d)}}$
so  it is enough to compute the dimension of \\
$\Ext^1_S(\Bbbk, S/J_f)_{e}$ as a $\Bbbk$-vector space.

Applying the functor $\Hom_S(-, S/J_f)_e$ to the Koszul resolution of $\Bbbk$ we obtain the following complex:
\[
(S/J_f)_e \xrightarrow{{\begin{bmatrix}\alpha_0 \\ \alpha_1 \\ \alpha_2 \end{bmatrix}}
} (S/J_f)_{e+1}^{\oplus 3} \xrightarrow{{\begin{bmatrix}-\alpha_1  & \alpha_0 & 0 \\-\alpha_2 & 0 & \alpha_0 \\0 & -\alpha_2 & \alpha_1\end{bmatrix}}} (S/J_f)_{e+2}^{\oplus 3} \xrightarrow{{\begin{bmatrix}\alpha_2 & -\alpha_1 & \alpha_0\end{bmatrix}}} (S/J_f)_{e+3}.
\]
We need to show that the cohomology at $(S/J_f)_{e+1}^{\oplus 3}$ is an $(r-d)$-dimensional $\Bbbk$-vector space. We will denote the map $(S/J_f)_e \to (S/J_f)_{e+1}^{\oplus 3}$ by $d_0$ and the map $(S/J_f)_{e+1}^{\oplus 3} \to (S/J_f)_{e+2}^{\oplus 3}$ by $d_1$.
Let $h_1,h_2,h_3\in S_{e+1}$ be such that $d_1(\overline{h_1},\overline{h_2},\overline{h_3}) = 0$, where $\overline{h_i}$ is the class of $h_i$ in the quotient ring $S/J_f$. Let $h_1=\alpha_0h_1'(\alpha_0,\alpha_1,\alpha_2)+h_1''(\alpha_1,\alpha_2)$. Then $(-\alpha_1 h_1'' + \alpha_0(h_2-\alpha_1h_1'), - \alpha_2h_1'' + \alpha_0(h_3-\alpha_2h_1'), \alpha_1h_3 - \alpha_2h_2) \subseteq J_f$.
The ideal $J_f$ is monomial. Therefore, since $(J_f\colon \alpha_0) = J_f$ and $h_1''$ does not depend on $\alpha_0$, we get
$(h_2-\alpha_1h_1',h_3-\alpha_2h_1') \subseteq J_f$.
Thus, $(\overline{h_1}, \overline{h_2}, \overline{h_3}) = (\overline{h_1''},0,0) + d_0(\overline{h_1'})$. 
We claim that $(\overline{h_1''},0,0) \in \ker d_1$ for every degree $e+1$ homogeneous polynomial $h_1''\in \Bbbk[\alpha_1,\alpha_2]$. Indeed, we have $\alpha_1h_1'', \alpha_2h_1''\in J_f$ as $(J_f)_{e+2} \cap \Bbbk[\alpha_1,\alpha_2] =\Bbbk[\alpha_1,\alpha_2]_{e+2}$.
It follows that
\[
\dim_\Bbbk \Ext^1_S(\Bbbk, S/J_f)_e = \dim_\Bbbk \Bbbk[\alpha_1,\alpha_2]_{e+1}/(J_f\cap \Bbbk[\alpha_1, \alpha_2])_{e+1} = (r-d).
\]
\end{proof}
\begin{lemma}\label{l:ts_estimation_term_3}
In the above notation, $\dim_\Bbbk \Hom_S(J_f, S/I_f)_0 = 2r$.
\end{lemma}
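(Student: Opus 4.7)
The plan is to exploit the fact that $J_f = \mathfrak{a}_f \cdot S$ is extended from the monomial ideal $\mathfrak{a}_f := J_f \cap T \subset T$ (in particular $J_f \cong \mathfrak{a}_f \otimes_T S$), which converts the $S$-computation into a $T$-computation via the adjunction
\[
\Hom_S(\mathfrak{a}_f \otimes_T S, N) \cong \Hom_T(\mathfrak{a}_f, N)
\]
for any graded $S$-module $N$. Applied to $N = S/I_f$, this gives $\Hom_S(J_f, S/I_f)_0 \cong \Hom_T(\mathfrak{a}_f, S/I_f)_0$, where $S/I_f$ is viewed as a graded $T$-module.

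First I would decompose $S/I_f$ as a graded $T$-module according to powers of $\alpha_0$. Setting $\mathfrak{b}_i := (I_f :_S \alpha_0^i) \cap T$, one obtains $S/I_f \cong \bigoplus_{i \geq 0} (T/\mathfrak{b}_i)(-i)$ as graded $T$-modules. Reading off the listed generators of $I_f$: the relations $A_j \alpha_0 \in I_f$ for $j \in [d-s,\, r-s-1]$ force $A_j \in \mathfrak{b}_1$ for those indices, and together with the generators of $I_f \cap T$ (namely $A_e,\ldots,A_{r-s}$, $B_{r-s-1},\ldots,B_{r-d}$, $C_{r-d-1},\ldots,C_0$) this gives $\mathfrak{b}_1 \supseteq \mathfrak{a}_f$. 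The reverse inclusion $\mathfrak{b}_i \subseteq \overline{I_f} \cap T = \mathfrak{a}_f$ is automatic, so $\mathfrak{b}_i = \mathfrak{a}_f$ for all $i \geq 1$, while $\mathfrak{b}_0 = I_f \cap T$. Hence
\[
\dim_\Bbbk \Hom_S(J_f, S/I_f)_0 = \dim_\Bbbk \Hom_T(\mathfrak{a}_f, T/\mathfrak{b}_0)_0 + \dim_\Bbbk \Hom_T(\mathfrak{a}_f, T/\mathfrak{a}_f)_{<0}.
\]

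Next I would handle the two summands. For the second, Fogarty's theorem gives $\dim_\Bbbk \Hom_T(\mathfrak{a}_f, T/\mathfrak{a}_f) = 2r$, while Proposition~\ref{p:tangent_space_of_extended_ideal} applied to $\mathfrak{a}_f$ reduces $\dim \Hom_T(\mathfrak{a}_f, T/\mathfrak{a}_f)_{>0}$ to a combinatorial sum, which collapses to $(r-d) \cdot \mu_{\mathfrak{a}_f}(e)$ because $\lambda_{\mathfrak{a}_f}(a) = 0$ for $a \geq e+2$, so only the degree-$e$ generators contribute, and the only positive values of $\lambda$ above $e$ occur at $e+1$ with $\lambda_{\mathfrak{a}_f}(e+1) = r-d$. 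For the first summand, I would use the short exact sequence of graded $T$-modules
\[
0 \to \mathfrak{a}_f/\mathfrak{b}_0 \to T/\mathfrak{b}_0 \to T/\mathfrak{a}_f \to 0,
\]
together with the identification $\mathfrak{a}_f/\mathfrak{b}_0 \cong \Bbbk(-e)^{\oplus(r-d)}$: the classes of $A_{r-s-1},\ldots,A_{d-s}$ span $\mathfrak{a}_f/\mathfrak{b}_0$, and both $\alpha_1 A_j = A_{j+1}$ and $\alpha_2 A_j = B_j$ lie in $\mathfrak{b}_0$ for these indices. The connecting map
\[
\Hom_T(\mathfrak{a}_f, T/\mathfrak{a}_f)_0 \to \Ext^1_T(\mathfrak{a}_f, \mathfrak{a}_f/\mathfrak{b}_0)_0
\]
lands in $\Ext^1_T(\mathfrak{a}_f, \Bbbk)_{-e}^{\oplus(r-d)}$, which by Lemma~\ref{l:exts_to_field} equals $\rho_{\mathfrak{a}_f}(e)^{\oplus(r-d)} = 0$ since $\mathfrak{a}_f$ has no relations in degree $e$ (relations appear only in degrees $\geq e+1$). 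Thus the sequence splits on $\Hom$ to yield
\[
\dim_\Bbbk \Hom_T(\mathfrak{a}_f, T/\mathfrak{b}_0)_0 = (r-d)\mu_{\mathfrak{a}_f}(e) + \dim_\Bbbk \Hom_T(\mathfrak{a}_f, T/\mathfrak{a}_f)_0.
\]

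Summing, the $\dim \Hom_T(\mathfrak{a}_f, T/\mathfrak{a}_f)_0$ terms cancel and the $(r-d)\mu_{\mathfrak{a}_f}(e)$ cancels against $\dim \Hom_T(\mathfrak{a}_f, T/\mathfrak{a}_f)_{>0}$, leaving $2r$. The main obstacle is pinning down the two ingredients that make this cancellation work: identifying $\mathfrak{b}_i$ correctly for $i \geq 1$ (which hinges on the explicit form of the degree-$e$ generators of $I_f$ containing $\alpha_0$), and checking via Proposition~\ref{p:tangent_space_of_extended_ideal} that the positive part of $\Hom_T(\mathfrak{a}_f, T/\mathfrak{a}_f)$ has exactly the dimension $(r-d)\mu_{\mathfrak{a}_f}(e)$ needed to match the $\mathfrak{a}_f/\mathfrak{b}_0$ contribution. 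The latter amounts to the observation that $\mathfrak{a}_f$ is concentrated in a single "window" of degrees $e, e+1, e+2$, so the double sum in the proposition collapses to a single product.
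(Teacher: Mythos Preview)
Your argument is correct, and it arrives at the same numerical cancellation as the paper's proof, but via a genuinely different decomposition. The paper applies $\Hom_S(J_f,-)_0$ to the short exact sequence $0\to J_f/I_f\to S/I_f\to S/J_f\to 0$: it shows $\Ext^1_S(J_f,J_f/I_f)_0=0$ (again via Lemma~\ref{l:exts_to_field}, using $\beta_{2,e}(S/J_f)=0$), computes $\dim_\Bbbk\Hom_S(J_f,J_f/I_f)_0=(r-d)(\dim_\Bbbk S_e-d)$ directly from $J_f/I_f\cong\Bbbk(-e)^{\oplus(r-d)}$, and evaluates $\dim_\Bbbk\Hom_S(J_f,S/J_f)_0=\sum_{i\leq 0}\dim_\Bbbk\Hom_T(\mathfrak{a}_f,T/\mathfrak{a}_f)_i$ by the same combination of Fogarty and Proposition~\ref{p:tangent_space_of_extended_ideal} that you use. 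The two quantities sum to $2r$.

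Your route instead decomposes $S/I_f$ as a graded $T$-module by $\alpha_0$-degree, which is legitimate precisely because $I_f$ is monomial; this replaces the paper's short exact sequence in $S$-modules by the sequence $0\to\mathfrak{a}_f/\mathfrak{b}_0\to T/\mathfrak{b}_0\to T/\mathfrak{a}_f\to 0$ in $T$-modules. The cancelling quantity is the same in both proofs, since $\mu_{\mathfrak{a}_f}(e)=e+1-(d-s)=\dim_\Bbbk S_e-d$. The paper's version is slightly more streamlined in that it never needs to identify the colon ideals $\mathfrak{b}_i$ explicitly; your version, on the other hand, makes the $T$-module structure of $S/I_f$ completely transparent and isolates exactly where the extra $r-d$ degree-$e$ generators of $\mathfrak{a}_f$ over $\mathfrak{b}_0$ contribute. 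One small point of phrasing: when you say ``the $\dim\Hom_T(\mathfrak{a}_f,T/\mathfrak{a}_f)_0$ terms cancel'', what is actually happening is that the degree-$0$ piece from your first summand combines with the strictly negative piece from your second summand to give $\dim\Hom_T(\mathfrak{a}_f,T/\mathfrak{a}_f)_{\leq 0}=2r-(r-d)\mu_{\mathfrak{a}_f}(e)$, and then the remaining $(r-d)\mu_{\mathfrak{a}_f}(e)$ cancels.
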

\begin{proof}
Observe that $\Ext^1_S(J_f, J_f/I_f)_0 \cong (\Ext^1_S(J_f, \Bbbk)_{-e})^{\oplus (r-d)} = 0$ is zero by Lemma \ref{l:exts_to_field}. Indeed $(J_f)_{e-1} = 0$ so $\beta_{1, e}(J_f) = \beta_{2, e}(S/J_f) = 0$.

Therefore, applying the functor $\Hom_S(J_f, -)_0$ to the short exact sequence \eqref{eq:main_exact_sequence} we obtain a short exact sequence
\begin{equation}\label{eq:ts_est_t3A}
0\to \Hom_S(J_f, J_f/I_f)_0 \to \Hom_S(J_f, S/I_f)_0 \to \Hom_{S}(J_f, S/J_f)_0 \to 0.
\end{equation}
Since the minimal degree of generator of $J_f$ is $e$ we have
\begin{equation}\label{eq:ts_est_t3B}
\dim_\Bbbk \Hom_S(J_f, J_f/I_f)_0 = (r-d) \dim_\Bbbk \Hom_S(J_f, \Bbbk(-e))_0
= (r-d) \dim_\Bbbk (J_f)_e = (r-d)(\dim_\Bbbk S_e - d).
\end{equation}
Finally, let $\mathfrak{a}_f= J_f\cap T.$ Then $J_f = \mathfrak{a}_fS$, which implies by Lemma \ref{lem:homs_extends} that
\[
\dim_\Bbbk \Hom_S(J_f, S/J_f)_0 = \sum_{i\leq 0} \dim_\Bbbk \Hom_T(\mathfrak{a}_f, T/\mathfrak{a}_f)_i.
\]
Since $\Spec (T/\mathfrak{a}_f)$ corresponds to a point of the Hilbert scheme $\mathcal{H}ilb_{r}(\mathbb{A}^2)$ which is smooth and $2r$-dimensional we get 
\begin{equation}\label{eq:ts_est_t3C}
\sum_{i\leq 0} \dim_\Bbbk \Hom_T(\mathfrak{a}_f, T/\mathfrak{a}_f)_i = 2r - \sum_{i > 0} \dim_\Bbbk \Hom_T(\mathfrak{a}_f, T/\mathfrak{a}_f)_i  
= 2r - (r-d)(\dim_\Bbbk S_e - d)
\end{equation}
where the last equality follows from Proposition \ref{p:tangent_space_of_extended_ideal}. The exact sequence \eqref{eq:ts_est_t3A}
and Equations \eqref{eq:ts_est_t3B}, \eqref{eq:ts_est_t3C} imply that $\dim_\Bbbk \Hom_S(J_f, S/I_f)_0 = 2r$.
\end{proof}

\begin{lemma}\label{l:ts_dim_is_2r}
In the above notation, $\dim_\Bbbk \Hom_S(I_f, S/I_f)_0 \leq 2r$.
\end{lemma}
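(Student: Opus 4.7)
The plan is to extract the bound from the long exact sequence of $\Ext$ associated with the short exact sequence
\[
0 \to I_f \to J_f \to J_f/I_f \to 0,
\]
using the three dimension computations already established in Lemmas \ref{l:ts_estimation_term_1}, \ref{l:ts_estimation_term_2} and \ref{l:ts_estimation_term_3}.

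More precisely, I would apply the functor $\Hom_S(-, S/I_f)$ to the above short exact sequence and take the degree-zero part. This yields an exact sequence of $\Bbbk$-vector spaces
\[
0 \to \Hom_S(J_f/I_f, S/I_f)_0 \to \Hom_S(J_f, S/I_f)_0 \to \Hom_S(I_f, S/I_f)_0 \to \Ext^1_S(J_f/I_f, S/I_f)_0.
\]
From exactness, the dimension of $\Hom_S(I_f, S/I_f)_0$ is bounded above by
\[
\dim_\Bbbk \Hom_S(J_f, S/I_f)_0 - \dim_\Bbbk \Hom_S(J_f/I_f, S/I_f)_0 + \dim_\Bbbk \Ext^1_S(J_f/I_f, S/I_f)_0.
\]
Plugging in the values $(r-d)^2$, $(r-d)^2$, and $2r$ from the preceding lemmas gives the bound $2r-(r-d)^2+(r-d)^2 = 2r$, as required.

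There is no real obstacle here: all the hard work has been done in the three preceding lemmas, and the only ingredient needed is the standard long exact sequence of $\Ext$ (together with the fact that taking the degree-zero component of a $\ZZ$-graded exact sequence preserves exactness). In particular, no vanishing of $\Ext^1_S(J_f, S/I_f)_0$ is needed, since we only use the leftmost four terms of the long exact sequence to produce the upper bound.
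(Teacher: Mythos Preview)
Your proof is correct and essentially identical to the paper's own argument: the paper also applies $\Hom_S(-,S/I_f)_0$ to the short exact sequence $0\to I_f\to J_f\to J_f/I_f\to 0$, extracts the same four-term exact sequence, and plugs in the values from Lemmas~\ref{l:ts_estimation_term_1}, \ref{l:ts_estimation_term_2}, \ref{l:ts_estimation_term_3} to obtain the bound $2r$.
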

\begin{proof}
From the long exact sequence obtained by applying the functor $\Hom_S(-, S/I_f)_0$ to the short exact sequence 
$0\to I_f \to J_f \to J_f/I_f \to 0$
we get
\[
\dim_\Bbbk \Hom_S(I_f, S/I_f)_0 \leq 
\dim_\Bbbk\Hom_S(J_f, S/I_f)_0 + \dim_\Bbbk \Ext^1_S(J_f/I_f, S/I_f)_0 - \dim_\Bbbk \Hom_S(J_f/I_f, S/I_f)_0.
\]
Using Lemmas \ref{l:ts_estimation_term_1}, \ref{l:ts_estimation_term_2} and \ref{l:ts_estimation_term_3} we conclude that $\dim_\Bbbk \Hom_S(I_f, S/I_f)_0 \leq 2r$.
\end{proof}

We summarize the above reasoning to obtain a proof of Theorem \ref{t:almost_general_implies_lip}.
\begin{proof}[Proof of Theorem \ref{t:almost_general_implies_lip}]
Let $f$ be the Hilbert function of $S/\overline{I_0}$.
By Lemma \ref{l:reduction_to_star2} we may assume that $f$ satisfies condition \eqref{eq:condition_star2}, i.e. that $f\in \Omega_r$. Let $V$ be the locus of those closed points $[I]$ of $\Hilb_S^{h_{r,2}}$ for which $S/\overline{I}$ has Hilbert function $f$. We shall show that $V\subseteq \Slip_{r,2}$.
Locus $V$ is irreducible by Lemma \ref{l:stratas_of_functions_are_irreducible}.
Therefore, by Lemma \ref{l:components_of_intersection} it is enough to find a point $[I_1]\in \Slip_{r,2}\cap V$ such that $\dim_\Bbbk T_{[I_1]} \Hilb_S^{h_{r,2}} = 2r$.

We claim that we may take $I_1 = I_f$ as defined by Equation \eqref{eq:def_if}. We have $[I_f]\in V\cap \Slip_{r,2}$ by Lemmas~\ref{l:I_f_in_V} and \ref{l:kf_is_saturated}. Moreover, $[I_f]$ is a smooth point of $\Hilb_S^{h_{r,2}}$ by Lemma \ref{l:ts_dim_is_2r}.
\end{proof}

\subsection{Theorem \ref{t:almost_general_implies_lip} does not hold in general for projective space}
For fixed positive integers $r,n$, condition $\eqref{eq:condition_star}$ can be generalized as follows:
\begin{equation}\label{eq:condition_star_general}
\exists e,d\in \mathbb{Z}_{>0} \text{ such that } f(a) = \begin{cases}
h_{r, n}(a) & \text{ if } a\neq e\\
d & \text{ if }a=e.
\end{cases}
\tag{$\star\star\star$}
\end{equation}

Since for $n\geq 3$ and $r$ large enough, the Hilbert scheme $\mathcal{H}ilb_r(\mathbb{P}^n)$ is reducible (see \cite{Iar72}), it cannot be expected that a naive analogue of Theorem \ref{t:almost_general_implies_lip} holds in $\mathbb{P}^n$. The following remark gives a counterexample.

\begin{remark}\label{r:generalization_1}
There are non-smoothable closed subschemes of $\mathbb{P}^6$ with Hilbert function 
\[
(1,7,13,14, 14 \ldots)
\]
(see \cite{CJN15}, \cite{Jel16}). Given any such subscheme $R$ and its homogeneous ideal $I=I(R)$, we can choose any $14$-dimensional subspace $V$ of $I_2$ and construct an ideal $J=V\oplus I_{\geq 3}$. Then $[J]\in \Hilb_{S[\PP^6]}^{h_{14, 6}}$, Hilbert function of $S[\PP^6]/\overline{J}$ satisfies \eqref{eq:condition_star_general} (with $r=14, n=6$) but $[J]\notin \Slip_{14, 6}$.
\end{remark}

However, one could expect that existence of more irreducible components of $\mathcal{H}ilb_{r}(\mathbb{P}^n)$ is the only obstacle. The following example shows that the Hilbert function of $S/\overline{I}$ may satisfy condition \eqref{eq:condition_star_general} for $[I] \in \Hilb_{S[\PP^n]}^{h_{r,n}}$ that is not in the closure of the locus of points corresponding to saturated ideals.

\begin{remark}\label{r:generalization_2}
Consider the ideal
 \[
I=(\alpha_0^2\alpha_1,\alpha_0\alpha_1^2,\alpha_0\alpha_2,\alpha_0\alpha_3,\alpha_1\alpha_2,\alpha_1\alpha_3,\alpha_2^4) \in S[\PP^3] = \Bbbk[\alpha_0,\ldots, \alpha_3].
\]
Then $[I] \in \Hilb_{S[\PP^3]}^{h_{6, 3}}$ and the Hilbert function of $S/\overline{I}$ satisfies \eqref{eq:condition_star_general} with $r=6, n=3$. Suppose that $[I]$ is in the closure of the locus of points corresponding to saturated ideals. Since, $\mathcal{H}ilb_{6}(\PP^3)$ is irreducible (see \cite[Thm.~1.1]{CEVV09}) it follows that $[I]\in \Slip_{6, 3}$. This contradicts Theorem \ref{t:hilbert_function_of_square_criterion} since $H_{S/I^2}(6)= 23 < r(n+1)=24$.
\end{remark}

\subsection{Example of a singular point in the interior of Slip for projective plane}
Since $\mathcal{H}ilb_{r}(\PP^2)$ is smooth and $\Slip_{r, 2}$ is related to $
\mathcal{H}ilb_{r}(\PP^2)$ by the natural morphism 
\[
\varphi_{r, \PP^2} \colon \Hilb_{S[\PP^2]}^{h_{r, 2}} \to \mathcal{H}ilb_{r}(\PP^2),
\]
it could be expected that the only singular points of $\Hilb_{S[\PP^2]}^{h_{r, 2}}$ in $\Slip_{r, 2}$ are the points that lie in another irreducible component. We will show that it is not true.

We start with introducing some notation. Let $\Theta_{8, \PP^2}$ be the set of all functions $f\colon \mathbb{Z} \to \mathbb{Z}$ such that $f$ is the Hilbert function of a saturated homogeneous ideal of $S$ defining a zero-dimensional closed subscheme of $\PP^2$ of length $8$. Given $f\in \Theta_{8, \PP^2}$, let $U_f$ be the locally closed subscheme of $\mathcal{H}ilb_{r}(\PP^2)$ defined by those closed points that correspond to subschemes of $\PP^2$ with Hilbert function $f$. These sets with varying $f\in \Theta_{8, \PP^2}$ form a stratification of $\mathcal{H}ilb_{r}(\PP^2)$ by locally closed irreducible subsets (see \cite{Got88}). Let $V_f$ be the set-theoretic inverse image of $U_f$ under $\varphi_{r, \PP^2}$. In particular, $\Slip_{8, 2} = \overline{V_{h_{8, 2}}}$. Also, we say that $f \leq g$ for $f,g\colon \mathbb{Z} \to \mathbb{Z}$ if for every $a\in \mathbb{Z}$ we have $f(a) \leq g(a)$. This gives a partial order on $\Theta_{8, \PP^2}$.

Let $f_1,f_2 \colon \mathbb{Z} \to \mathbb{Z}$ be given by 
\[
{f_1(a) = \begin{cases}
\dim_\Bbbk S_a & \text{ for } a<3 \\
7 & \text{ for } a=3 \\
8 & \text{ for } a>3
\end{cases}
}
\quad \text{ and } \quad
   {f_2(a) = \begin{cases}
\dim_\Bbbk S_a & \text{ for } a<2 \\
5 & \text{ for } a=2 \\
7 & \text{ for } a=3 \\
8 & \text{ for } a>3
\end{cases}}.
\]

Let $I_1 = (\alpha_1^2\alpha_2,\alpha_0\alpha_1^2+\alpha_1\alpha_2^2,\alpha_1^4, \alpha_1\alpha_2^3, \alpha_2^5)$. Then the Hilbert function of $S/\overline{I_1}$ 
is $f_1$ so $[I_1] \in \Slip_{8, 2}$ by Theorem \ref{t:almost_general_implies_lip}. Hence also its initial ideal with respect to lex order ($\alpha_0>\alpha_1>\alpha_2$), i.e. 
\[
I_2 = (\alpha_0\alpha_1^2, \alpha_1^2\alpha_2, \alpha_1^4, \alpha_1\alpha_2^3, \alpha_2^5),
\]
is in $\Slip_{8, 2}$. Since $\dim_\Bbbk T_{[I_2]} \Hilb_{S[\PP^2]}^{h_{8,2}} = 16 = \dim \Slip_{8, 2}$, it follows that every irreducible closed subset of $\Hilb_{S[\PP^2]}^{h_{8, 2}}$ passing through $[I_2]$ is contained in $\Slip_{8, 2}$. In particular
\[
I_3 =(\alpha_1^3, \alpha_1^2\alpha_2, \alpha_1^2\alpha_0^2, \alpha_1\alpha_2^3, \alpha_2^5)
\]
belongs to $\Slip_{8, 2}$ since $[I_2], [I_3]$ lie in $V_{f_2}$ which we claim is irreducible. Indeed, restriction of $\varphi_{8, \PP^2}$ to the subschemes $U_{f_2}, V_{f_2}$ with reduced structures gives a proper, surjective morphism $V_{f_2}\to U_{f_2}$ to an (irreducible) variety such that over every closed point of $U_{f_2}$, the fiber is irreducible and of dimension $2$. Therefore, $V_{f_2}$ is irreducible.

We have $\dim_\Bbbk T_{[I_3]} \Hilb_{S[\PP^2]}^{h_{8, 2}} = 17 > \dim \Slip_{8, 2}$. Let $Z$ be an irreducible component of $\Hilb_{S[\PP^2]}^{h_{8, 2}}$
containing $[I_3]$. We will show that $Z = \Slip_{8, 2}$. We claim that $V_{f_2}$ is open in $\Hilb_{S[\PP^2]}^{h_{8, 2}} \setminus (V_{h_{8, 2}} \cup V_{f_1})$. It suffices to show that $U_{f_2}$ is open in $\mathcal{H}ilb_8(\PP^2)\setminus (U_{h_{8,2}}\cup U_{f_1})$. Since $f_2$ is the greatest element of $\Theta_{8, \PP^2}\setminus \{h_{8, 2}, f_1\}$ it follows that $U_{f_2}$ is defined by the closed points of $\mathcal{H}ilb_8(\PP^2)\setminus (U_{h_{8,2}}\cup U_{f_1})$ which correspond to subschemes with Hilbert function at least $f_2$. The set $\Theta_{8, \PP^2}$ is finite by Lemma \ref{l:properties_of_hilbert_function_of_saturated_ideal}. Therefore, $U_{f_2}$ is open in $\mathcal{H}ilb_8(\PP^2)\setminus (U_{h_{8,2}}\cup U_{f_1})$ by the semi-continuity theorem \cite[Thm.~III.12.8]{H77} applied to the Serre twists of the ideal sheaf of the universal subscheme of $\mathcal{H}ilb_r(\PP^n)\times \PP^n$. This concludes the proof of the claim.
Let $\eta$ be the generic point of $Z$. If $\eta \notin V_{h_{8, 2}} \cup V_{f_1} \cup V_{f_2}$ then $[I_3] \notin \overline{\{\eta\}}$ since $V_{f_2}$ is open in $\Hilb_{S[\PP^2]}^{h_{8, 2}} \setminus (V_{h_{8, 2}} \cup V_{f_1})$. Therefore, $Z = \overline{\{\eta\}} \subseteq \Slip_{8, 2}$ since by what was shown above, $V_{f_1} \cup V_{f_2} \subseteq \Slip_{8, 2}$.

\section*{Acknowledgements}

I would like to thank Jaros{\l{}}aw Buczy{\'n}ski for introduction to this subject, many discussions
and constant support. I am grateful to Joachim Jelisiejew for his suggestions concerning the proofs of Theorem \ref{t:hilbert_function_of_square_criterion} and Proposition \ref{p:tangent_space_of_extended_ideal}.

This work was supported by National Science Center, Poland, projects number 
2017/26/E/ST1/00231 and 2019/33/N/ST1/00858.

Computations were done in Macaulay2 \cite{M2}.

\bibliographystyle{abbrv}

\end{document}